\numberwithin{equation}{section}
\DeclareMathSymbol{\leqslant}{\mathalpha}{AMSa}{"36} 
\DeclareMathSymbol{\geqslant}{\mathalpha}{AMSa}{"3E} 
\DeclareMathSymbol{\eset}{\mathalpha}{AMSb}{"3F}     
\renewcommand{\leq}{\;\leqslant\;}                   
\renewcommand{\geq}{\;\geqslant\;}                   
\newcommand{\be}{\begin{equation}}
\def\1{\ifmmode {1\hskip -3pt \rm{I}} \else {\hbox {$1\hskip -3pt \rm{I}$}}\fi}
\newtheorem{Th}{Theorem}[section]
\newtheorem{Le}[Th]{Lemma}
\newtheorem{Cor}[Th]{Corollary}
\newtheorem{Def}[Th]{Definition}
\newtheorem{Rem}{Remark}
\title[Cyclic random walks]
{Which random walks are cyclic? }
\author{D. Gabrielli}
 \address{Davide Gabrielli.  Dipartimento di Matematica, Universit\`a
 dell'Aquila, via Vetoio loc.  Coppito, 67100 L'Aquila, Italy. e--mail:
 gabriell@univaq.it, dvd.gabrielli@gmail.com}
\author{C. Valente}
 \address{Carla Valente.  Dipartimento di Matematica, Universit\`a
 dell'Aquila, via Vetoio  loc. Coppito, 67100 L'Aquila, Italy. e--mail:
 carla.valente@gmail.com}
\begin{document}

\begin{abstract}
A cyclic random walk is a random walk whose transition
probabilities/rates can be written as a superposition of the
empirical measures of a family of finite cycles. This identifies a
convex set of models. We discuss the problem of characterization of
cyclic random walks in some special cases showing that it is related
to several remarkable and classical results. In particular we
introduce the notion of balanced measure and show that a translation
invariant random walk on $\mathbb Z^d$ is cyclic if and only if its
transition probability is balanced. The characterization of the
extremal elements is obtained using the Carath\'{e}odory's Theorem
of convex analysis. We then show that a random walk on a finite set
is cyclic if and only if at every vertex the outgoing flux of the
transition graph is equal to the ingoing flux. The extremal elements
are characterized by the Birkhoff-Von-Neumann Theorem. Finally we
consider the discrete torus and discuss when the cyclic
decomposition can be  done using only homotopically trivial cycles
or elementary cycles associated to edges and two dimensional faces.
While in one dimension this is equivalent to require some geometric
properties of a discrete vector field associated to the transition
rates, in two dimension this is not the case. In particular we give
a simple characterization of the polyhedron of the rates admitting a
cyclic decomposition with elementary cycles. The proof is based on a
discrete Hodge decomposition, elementary homological algebra and the
Helly's Theorem of convex analysis. Finally we discuss a natural
discretization procedure of smooth divergence free continuous vector
fields and an application to random walks in random environments.

\medskip

\noindent {\sl Key words}: Random walks, convex analysis, discrete
geometry.

\medskip

\noindent {\sl AMS 2010 subject classification}:
82B41 
05C81 
52C99
\end{abstract}

\maketitle

\section{Introduction}

A discrete/continuous time cyclic random walk is a random walk whose
transition probabilities/rates can be written as a superposition of
empirical measures of a family of finite cycles. This definition
identifies a convex set of models. The aim of this paper is to
characterize this set in some special cases. This is important since
once you known that a random walk is cyclic then several properties
and results can be deduced. For example the following results have
been obtained  using some kind of cyclic decomposition: in
\cite{LKO} and \cite{KO} a central limit theorem and in \cite{D} a
quenched central limit theorem for random walks in random
environments, in \cite{S} some regularity properties of the
diffusion coefficient for a mean zero exclusion process, in \cite{V}
a central limit theorem for a tagged particle of a mean zero
exclusion process, in \cite{Math} some bounds on the transition
probabilities. In different papers some slightly different cyclic
decompositions are used. In any case all of them correspond to
define, starting from the Markov model, a weighted oriented graph
for which a cyclic decomposition holds. Our results can be
interpreted as results concerning cyclic decompositions of weighted
oriented graphs. In this sense they can be applied in any case
independently from the special rule associating a oriented weighted
graph to a Markov model. There is an huge amount of results
concerning cyclic decompositions of graphs, see for example
\cite{BJG}, \cite{B} and \cite{Di}. The majority of them are of
combinatorial type. Our results have instead a geometric flavour and
are the following.

\smallskip

We consider translational invariant Markov chains on the lattice
$\mathbb Z^d$. A model is identified by a probability measure on
$\mathbb Z^d$ which determines the weight to be associated to the
different jumps. We introduce the notion of balanced measure. An
Hyperplane on $\mathbb R^d$ containing the origin determines two
half spaces.  We can then compute the average distance from the
hyperplane for the measure restricted to the two half spaces. If the
two averaged distances are equal (possibly also $+\infty$) and if
this happens for any hyperplane then we say that the measure is
balanced. Clearly any mean zero measure is balanced. A translational
invariant Markov chain is cyclic if and only if the corresponding
measure determining the distribution of the jumps is balanced. A
characterization of the extremal elements is obtained using the
classic Carath\'{e}odory's Theorem of convex analysis whose
statement is recalled in section \ref{qui}. The result for mean zero
measures could be deduced by the results in \cite{K} and \cite{W}.
Here we give an independent proof and extend the validity of a
cyclic decomposition to the class of balanced measures. This result
is not a special case of the Choquet-Bishop-de Leeuw theorem
\cite{P} since compactness is missing. See \cite{WW} and \cite{WW2}
for a general discussion of problems of this type.

\smallskip

On a finite set a Markov model is determined by an oriented weighted
graph. An oriented weighted graph is said to be balanced if for any
vertex the ingoing weight is equal to the outgoing weight. We show
that a Markov model on a finite set is cyclic if and only if the
corresponding oriented weighted graph is balanced. In the case of
discrete time Markov chains, a characterization of the extremal
elements is obtained using the classic Birkhoff-Von Neumann Theorem
whose statement is recalled in section \ref{qui}. The proofs of
these results are elementary but are useful in the discussion of the
next issue. In \cite{BFG} we will discuss and apply a generalization of this
result valid for infinite graphs with a condition of vanishing flux
towards infinity.

\smallskip

On a finite graph we can consider a restricted class of cycles
taking into account some topological obstructions. More precisely we
can consider homotopically trivial cycles or elementary cycles
associated to one and two dimensional faces of some cellular
decomposition. We discuss this problem in the cases of the one
dimensional and the two dimensional discrete torii. While in one
dimension the validity of restricted cyclic decomposition of this
type is equivalent to require some geometric properties of a
discrete vector field constructed starting from the weights, this is
not the case in two dimensions. In particular we give a simple
characterization of the models that admit a cyclic decomposition
using elementary cycles. Similar computations can be done also in
higher dimensions. In these cases it is necessary a more detailed
discussion of the homological structure and a more involved
geometric construction. We will discuss this generalization in a
separated paper \cite{DG} together with a discussion of the case of
infinite grids..

\smallskip

We discuss also some applications. First we introduce a simple and
natural discretization procedure for a continuous smooth divergence
free vector field. The result is a discrete divergence free vector
field on the lattice. Then, using the previous results, we give a
condition to obtain a weighted oriented graph admitting an
elementary cyclic decomposition. Finally we apply the above
construction to deduce a quenched Central Limit Theorem for a random
walk in random environments using the results in \cite{D}.

\smallskip

The paper is organized as follows. In section \ref{Nmr} we fix
notation and state our main results. The section is subdivided into
four subsections according to the three different frameworks that
we discuss plus a subsection for applications. In section \ref{qui} we recall some classic results and
prove some original results that are useful in the proofs of our
main results. In section \ref{santorogo} we collect the proofs of
our main results in the case of a translational invariant Markov
chain on the lattice $\mathbb Z^d$. In section \ref{crwfg} we
collect the proofs of our main results in the case of a Markov model
on a finite set. In section \ref{crwfgt} we collect the proofs of
our main results concerning cyclic decompositions using restricted
classes of cycles. In section \ref{applications} we discuss the applications.

\section{Notation and main results}\label{Nmr}
In this section we state all the main results of the paper and fix
notation.
\subsection{Cyclic random walks on $\mathbb Z^d$}\label{ciclzd}
An element $x\in \mathbb Z^d$ has coordinates $(x_1,\dots,x_d)$. We
call $e^{(i)}$ the versors of the canonical basis of $\mathbb R^d$.
This means that we have $e^{(i)}_j:=\delta_{i,j}$, where $\delta$ is
the Kronecker delta. We denote by $x\cdot y:=\sum_{i=1}^dx_iy_i$ the
Euclidean scalar product among $x,y\in \mathbb R^d$. With
$|x|:=\sqrt{x\cdot x}$ we denote the Euclidean norm of $x$.

We denote by $\mathcal M^{\leq 1}$ the set of positive measures on
$\mathbb Z^d$ with total mass less or equal to $1$ endowed with the
topology of weak convergence. Since in all the cases we will
consider a tightness condition will be always satisfied, the weak
convergence will be equivalent to the pointwise convergence. In
$\mathcal M^{\leq 1}$ there is a natural partial order structure.
Given $p,q\in \mathcal M^{\leq 1}$ then $p \preceq q$ when $p(x)\leq
q(x)$ for any $x\in \mathbb Z^d$. The support of $p\in \mathcal
M^{\leq 1}$ is defined as
\begin{equation}
\mathcal S(p):=\left\{x\in \mathbb Z^d\,:\, p(x)>0\right\}\,.
\nonumber
\end{equation}
By $\mathcal M^1\subseteq \mathcal M^{\leq 1}$ we denote the subset
of probability measures and by $\mathcal M_0\subseteq \mathcal
M^{\leq 1}$ the subset of mean zero measures. This is the set of
measures $p$ such that
\begin{equation}
\sum_{x\in \mathbb Z^d}p(x)|x_i|<+\infty\,, \qquad \forall i\,,
\label{summas}
\end{equation}
and moreover $\sum_{x\in \mathbb Z^d}p(x)x=0$.

\smallskip

Given $A\subseteq \mathbb R^d$ we denote by $co(A)$ its convex hull
and by $aff(A)$ its affine hull. These are defined as
$$
co(A):=\left\{x\in \mathbb R^d\, :\, x=\sum_{i=1}^n\alpha_ix^i;\,
x^i\in A,\, \alpha_i\geq 0,\, \sum_{i=1}^n\alpha_i=1,\, n\in \mathbb
N\right\}\,,
$$
$$
aff(A):=\left\{x\in \mathbb R^d\, :\, x=\sum_{i=1}^n\alpha_ix^i;\,
x^i\in A,\, \alpha_i\in \mathbb R,\, \sum_{i=1}^n\alpha_i=1,\, n\in
\mathbb N\right\}\,.
$$

\smallskip

Let $z^0, z^1,\dots ,z^{n-1}$ be distinct elements of $\mathbb Z^d$
and consider also $z^n:=z^0$. Define
$$
C:=(z^0,z^1,\dots ,z^{n-1},z^n)
$$
as the finite cycle on $\mathbb Z^d$ that, starting from $z^0$,
visits sequentially all the elements $z^i$ in increasing order with
respect to the index $i$. We say that such a cycle $C$ has
cardinality $|C|=n$. Given a finite cycle $C$ we define its
empirical measure $p^C\in \mathcal M^1$ as
\begin{equation}
p^C:=\frac{1}{|C|}\sum_{i=1}^{|C|}\delta_{z^i-z^{i-1}}\,,
\label{defcicl}
\end{equation}
where $\delta_x$ is the delta measure concentrated at $x$. The
cycles of cardinality $1$ are of the type $C=(z^0,z^0)$ and the
corresponding empirical measure is $\delta_0$. Empirical measures of
finite cycles are called purely cyclic measures. It is clear that
$p^C$ defined in \eqref{defcicl} depends only on the values of the
displacement vectors $w^i:=z^i-z^{i-1}$ and not on their relative
order. Note also that for any cycle $C$ we have
\begin{equation}
\sum_{i=1}^{|C|}w^i=\sum_{i=1}^{|C|}(z^i-z^{i-1})=0\,. \nonumber
\end{equation}
As a consequence we have
$$
\sum_{x\in \mathbb Z^d}p^C(x)x=\frac{1}{|C|}\sum_{i=1}^{|C|}w^i=0\,,
$$
so that every purely cyclic measure has mean zero.

\smallskip

Consider a time homogeneous, discrete time random walk
$\left\{X_n\right\}_{n\in \mathbb N}$ on $\mathbb Z^d$ with
translation invariant transition probabilities determined by $p\in
\mathcal M^1$ by the following identification
\begin{equation}
\mathbb P(X_{n+1}= y|X_n=x):=p(y-x)\,. \label{def}
\end{equation}

\smallskip

A translation invariant discrete time random walk on $\mathbb Z^d$
is called  purely cyclic if the measure $p$ in \eqref{def} is purely
cyclic. This means that there exists a finite cycle $C$ such that
$p=p^C$.

\smallskip

As far as $p^C$ is concerned, we can naturally introduce an
equivalence relation $\sim$ between cycles. Let $C$ be a cycle and
let $\{w^1,\dots ,w^{|C|}\}$ be the corresponding set of
displacement vectors. Note that in the set of displacement vectors
of a cycle, a vector $w$ may appear more than once. For this reason
we will also denote the set of displacement vectors as
\begin{equation}
\{(w^1,n_1),\dots,(w^k,n_k)\}\,,\nonumber
\end{equation}
to indicate that the vector $w^i$ appears $n_i\in \mathbb N$ times.
Consider now another cycle $C'$ and let $\{v^1,\dots ,v^{|C'|}\}$ be
the corresponding set of displacement vectors.  We say that $C\sim
C'$ if
$$
\{w^1,\dots ,w^{|C|}\}=\{v^1,\dots ,v^{|C'|}\}\,.
$$
The equivalence class to which the cycle $C$ belongs is denoted by
$[C]$ and is determined by the set of the displacement vectors of
any representant $C$. We can then naturally use the following
identification
\begin{equation}
[C]\equiv\{(w^1,n_1),\dots,(w^k,n_k)\}\,. \label{arrLan}
\end{equation}

We denote by $\mathcal C$ the countable set of equivalence classes
of cycles, endowed with the discrete topology. Note that if
$\{(w^1,n_1),\dots,(w^k,n_k)\}\in \mathcal C$ then
\begin{equation}
\sum_{i=1}^kn_iw^i=0\,. \label{sum0}
\end{equation}
Conversely if \eqref{sum0} holds then there exist natural numbers
$n_i'$ such that $n_i'\leq n_i$ and moreover
$\{(w^1,n_1'),\dots,(w^k,n_k')\}\in \mathcal C$. We cannot use
directly the numbers $n_i$ since according to our definition a cycle
is self-avoiding. Observe that if $C\sim C'$ then $p^C=p^{C'}$ and
we can therefore use the notation $p^{[C]}$ to denote $p^C$ where
$C$ is any representant of $[C]$. The converse is not necessarily
true. Indeed let $l\in \mathbb N$, $l\geq 2$, and consider any cycle
$C$ satisfying \eqref{arrLan} and any cycle $C'$ with
\begin{equation}
[C']=\{(w^1,ln_1),\dots,(w^k,ln_k)\}\,.\nonumber
\end{equation}
Then clearly it holds $[C]\neq[C']$ but nevertheless $p^C=p^{C'}$.

\begin{Def}
\label{rimd}
A probability measure $p$ on $\mathbb Z^d$ is called
cyclic if there exists a probability measure $\rho$ on $\mathcal C$
such that $p=p^\rho$ where $p^\rho$ is defined as
\begin{equation}
p^\rho:=\sum_{[C]\in \mathcal C}\rho([C])p^{[C]}\,.\label{decomp}
\end{equation}
A translation invariant discrete time random walk on $\mathbb Z^d$
is called cyclic if the measure $p$ in \eqref{def} is cyclic.
\end{Def}

The meaning of \eqref{decomp} is that for any $x\in \mathbb Z^d$ we
have
\begin{equation}
p^\rho(x)=\sum_{[C]\in \mathcal C}\rho([C])p^{[C]}(x)\,,
\label{ben10}
\end{equation}
i.e. we require just pointwise convergence. By monotone convergence
Theorem \eqref{ben10} implies
\begin{equation}
p^\rho(A)=\sum_{[C]\in \mathcal C}\rho([C])p^{[C]}(A)\,, \qquad
\forall A\subseteq \mathbb Z^d\,. \nonumber
\end{equation}
Equivalently we have a probability measure on $\mathcal C\times
\mathbb Z^d$ that gives weight $\rho([C])p^{[C]}(z)$ to the pair
$([C],z)$ and $p^\rho$ is its $\mathbb Z^d$ marginal. Definition
\eqref{decomp} can also be interpreted as follows. For any sequence
$\mathcal C_n\subseteq \mathcal C$ such that $|\mathcal
C_n|<+\infty$, $\mathcal C_n\subseteq \mathcal C_{n+1}$ and
$\mathcal C=\cup_{n}\mathcal C_n$ it holds
\begin{equation}
p^\rho=\lim_{n\to +\infty}\sum_{[C]\in \mathcal
C_n}\rho([C])p^{[C]}\,, \label{igora}
\end{equation}
where the limit is in the weak sense. This follows immediately by
pointwise convergence since a tightness condition is clearly
verified.

If the measure $p^{\rho}$ is integrable, i.e. if \eqref{summas}
holds with $p^\rho$ instead of $p$, then by Fubini Theorem we get
$$
\sum_{x\in \mathbb Z^d}p^\rho(x)x=\sum_{x\in \mathbb
Z^d}\left(\sum_{[C]\in \mathcal
C}\rho([C])p^{[C]}(x)\right)x=\sum_{[C]\in\mathcal
C}\rho([C])\left(\sum_{x\in \mathbb Z^d}xp^{[C]}(x)\right)=0\,.
$$
This means that every measure $p^\rho$ that is integrable has mean
zero.

\medskip

Given $H$ an hyperplane in $\mathbb R^d$ we call $H^+$ and $H^-$ the
two closed half-spaces determined by $H$. Given $A,B\subseteq
\mathbb R^d$ we define
$$
d(A,B):=\inf_{\left\{x\in A\, ,\, y\in B\right\}}|x-y|\,.
$$
\begin{Def}\label{bal111}
A measure $p\in \mathcal M^{\leq 1}$ is called balanced if for
any hyperplane $H\ni 0$ we have
\begin{equation}
\sum_{x\in H^+}p(x)d(x,H)=\sum_{x\in H^-}p(x)d(x,H)\,.
\label{twosided}
\end{equation}
We call $\mathcal B\subseteq \mathcal M^1$ the set of balanced
probability measures.
\end{Def}

The main result of this subsection is the following.
\begin{Th}
\label{florisa}
An element of $\mathcal M^1$ is cyclic if and only
if it is balanced.
\end{Th}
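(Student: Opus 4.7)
The proof splits into two implications, the reverse being substantially harder.

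\emph{Forward direction (cyclic $\Rightarrow$ balanced).} Each purely cyclic $p^{[C]}$ has finite support and mean zero (by the displacement identity $\sum_{i=1}^{|C|}w^i=0$), so for any hyperplane $H\ni 0$ with unit normal $n$ the identity $d(x,H)=|n\cdot x|$ combined with mean zero yields
\[
\sum_{x\in H^+}p^{[C]}(x)d(x,H)=\sum_{x\in H^-}p^{[C]}(x)d(x,H),
\]
both finite. For $p=p^\rho$ an application of Tonelli to the non-negative double sum gives
\[
\sum_{x\in H^\pm}p(x)d(x,H)=\sum_{[C]\in\mathcal C}\rho([C])\sum_{x\in H^\pm}p^{[C]}(x)d(x,H),
\]
so the two sides coincide term-by-term in $[0,+\infty]$.

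\emph{Reverse direction (balanced $\Rightarrow$ cyclic).} The heart of the argument is the following

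\textbf{Key Lemma.} \emph{If $p\in\mathcal M^{\leq 1}$ is balanced and $x^\star\in\mathcal S(p)\setminus\{0\}$, there exist distinct $w^1=x^\star,w^2,\dots,w^k\in\mathcal S(p)$ and positive integers $n_1,\dots,n_k$ with $\sum_{i=1}^k n_i w^i=0$.}

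I would prove the lemma by contraposition. Absence of such an integer relation is equivalent, by LP rationality (the $w^i$ being integer vectors), to $-x^\star$ lying outside the real conic hull of $\mathcal S(p)\setminus\{x^\star\}$. The separation theorem then provides a linear functional $\ell$ with $\ell(x^\star)>0$ and $\ell\geq 0$ on $\mathcal S(p)$; taking $H:=\ker\ell$ one has $\mathcal S(p)\subset H^+$ and $x^\star\in H^+\setminus H$, so
\[
\sum_{x\in H^+}p(x)d(x,H)\geq p(x^\star)d(x^\star,H)>0=\sum_{x\in H^-}p(x)d(x,H),
\]
contradicting balance. The remark after \eqref{sum0} upgrades the integer relation to a genuine self-avoiding cycle containing $x^\star$ as a displacement.

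With the Key Lemma I would build the decomposition greedily: enumerate $\mathcal S(p)=\{x_1,x_2,\dots\}$, set $p_0:=p$, and at step $n$ let $j_n:=\min\{j\,:\,p_{n-1}(x_j)>0\}$, apply the Key Lemma to $p_{n-1}$ (which is balanced by induction, since $p_n=p_{n-1}-\lambda_n p^{C_n}$ is kept non-negative and a difference of balanced measures remains balanced in the extended-real sense) to produce a cycle $C_n$ through $x_{j_n}$, and set
\[
\lambda_n:=\min_{w\in\mathcal S(p^{C_n})}\frac{p_{n-1}(w)}{p^{C_n}(w)}>0,\qquad p_n:=p_{n-1}-\lambda_n p^{C_n}\succeq 0,
\]
so that at least one point drops out of the support at every step. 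The main obstacle I anticipate is the convergence argument, namely proving that $j_n\to+\infty$ (equivalently $p_n(x_k)\to 0$ for every fixed $k$). Stabilization of $j_n$ at some value $j$ would force $\sum_n\lambda_n p^{C_n}(x_j)\leq p(x_j)<\infty$ while the nested sets $\mathcal S(p_n)\setminus\{x_j\}$ keep strictly losing points; combining the balance of every $p_n$ with a renewed application of the Key Lemma to a residual limit measure should yield the sought contradiction, and this is the step that really needs care. Once $j_n\to+\infty$, the pointwise identity $p(x)=\sum_n\lambda_n p^{C_n}(x)$ is automatic (the sum at each $x$ is actually finite, since once $p_n(x)=0$ no later cycle can use $x$), and summing over $x\in\mathbb Z^d$ via Tonelli gives $\sum_n\lambda_n=1$, so $\rho:=\sum_n\lambda_n\delta_{[C_n]}$ is the required probability on $\mathcal C$.
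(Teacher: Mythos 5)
Your forward direction is fine and matches the paper's. The reverse direction, however, has two genuine gaps. First, the proof of your Key Lemma breaks down exactly in the case it cannot avoid: the conic hull of an infinite set of lattice vectors need not be closed, and if $-x^\star$ lies in the closure of $\mathrm{cone}\bigl(\mathcal S(p)\setminus\{x^\star\}\bigr)$ but not in the cone itself, then \emph{no} functional of the kind you invoke exists. Indeed any $\ell$ with $\ell\geq 0$ on $\mathcal S(p)\setminus\{x^\star\}$ is nonnegative on the closed cone, hence $\ell(x^\star)\leq 0$, and the extra requirement $\ell\geq 0$ at $x^\star$ forces $\ell(x^\star)=0$; the displayed strict inequality then evaporates, and balance only tells you that the support lies in $\ker\ell$, so you would have to descend to a lower-dimensional subspace and argue again — an induction you never set up. (Your LP-rationality step is fine; it is the separation step that is unjustified.) The paper sidesteps this entirely: it only needs $0\in co(\mathcal S(p))$, obtained from balancedness via the hyperplane characterization of Lemma \ref{dueth}, and then Carath\'eodory's Theorem \ref{kaka} together with Lemma \ref{lemmaestrem} chooses for it a finite set of support vectors in general position and the associated irreducible cycle; no prescribed point $x^\star$ is ever forced to lie on the cycle.

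Second, and more seriously, the convergence of your greedy scheme — the step you yourself flag as "the step that really needs care" — is precisely the mathematical content of the theorem in the infinite-support case, and your sketch does not supply it. If $j_n$ stabilizes at $j$, what must be proved is that the residual $p^*=\lim_n p_n$ vanishes; but $p^*$ being balanced does not follow from monotone convergence (the sequence is \emph{decreasing}, and when both sides of \eqref{twosided} are infinite for every $p_n$ nothing is transmitted to the limit), and even granting it, "a cycle inside $\mathcal S(p^*)$ through some point" contradicts nothing about the greedy procedure: the scheme only ever kills one point of each chosen cycle, and points $x_k$ with $k>j$ may simply be starved forever, in which case $p=\sum_n\lambda_n p^{C_n}$ fails at those points. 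The paper resolves this with a different selection rule: at each step the cycle is chosen with displacements in the smallest cube $\Lambda_{n_i}$ such that $0\in co(\mathcal S(p_i)\cap\Lambda_{n_i})$; then one contradiction argument shows $n_i\to+\infty$ (inside a fixed cube only finitely many points can be killed before $0$ drops out of the convex hull of the remaining support in that cube), and a second uses the minimality of $n_j$ to show $p_i\to 0$ pointwise. Some structural device of this kind is what your argument is missing; without it (and without a correct proof of the Key Lemma) the reverse implication is not established.
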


We underline that in \eqref{twosided} both sides can also be
infinite. Note also that $\mathcal M_0\cap \mathcal M^1\subseteq
\mathcal B$. Indeed let $n$ be a versor normal to $H$ then we have
$H^+=\left\{x \, : \, x\cdot n\geq 0\right\}$ and $H^-=\left\{x \, :
\, x\cdot n\leq 0\right\}$. Given $x\in H^+$ it holds $d(x,H)=x\cdot
n$ and likewise given $x\in H^-$ it holds $d(x,H)=-x\cdot n$. If
$p\in \mathcal M_0\cap \mathcal M^1$ then we have
\begin{equation}
0=\sum_{x\in \mathbb Z^d}p(x)\left(x\cdot n\right)=\sum_{x\in
H^+\cap \mathbb Z^d}p(x)d(x,H)-\sum_{x\in H^-\cap \mathbb
Z^d}p(x)d(x,H)\,.\nonumber
\end{equation}
Indeed the same computation shows that when $p$ is integrable then
it is balanced if and only if it belongs to $\mathcal M_0\cap
\mathcal M^1$. This corresponds to verify condition \eqref{twosided}
just for the d hyper-planes orthogonal to the vectors $e^{(i)}$ of
the canonical basis. The validity of \eqref{twosided} for the others
hyperplanes will follow. When $p$ is not integrable the balancing
condition \eqref{twosided} has to be checked for any hyperplane $H$,
since the validity of \eqref{twosided} with respect to the
hyper-planes orthogonal to the vectors of the canonical basis does
not imply the validity of \eqref{twosided} for any $H$. Consider for
example the measure $p$ defined by
$p(2i,-i)=p(-i,2i)=\frac{3}{\pi^2i^2}\,, i\geq 1$ and giving weight
zero to all the remaining elements of $\mathbb Z^2$ (see figure
\ref{gugace}). This measure satisfies \eqref{twosided} with respect
to the 2 hyper-planes orthogonal to the vectors $e^{(1)}, e^{(2)}$
but it does not satisfy the balancing condition for example with
respect to the hyperplane $H$ in figure \ref{gugace}.
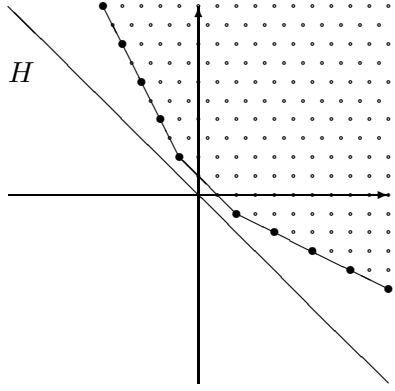
\begin{figure}
\setlength{\unitlength}{5cm}
\begin{picture}(1,1)
\put(0.5,0){\vector(0,1){1}} \put(0,0.5){\vector(1,0){1}}
\put(1,0){\line(-1,1){1}} \put(0,0.8){$H$} \put(0.45,
0.6){\circle*{0.02}} \put(0.4,0.7){\circle*{0.02}}
\put(0.35,0.8){\circle*{0.02}} \put(0.3,0.9){\circle*{0.02}}
\put(0.25,1){\circle*{0.02}} \put(0.6, 0.45){\circle*{0.02}}
\put(0.7,0.4){\circle*{0.02}} \put(0.8,0.35){\circle*{0.02}}
\put(0.9,0.3){\circle*{0.02}} \put(1,0.25){\circle*{0.02}}
\put(0.6,0.45){\line(-1,1){0.15}} \put(0.6,0.45){\line(2,-1){0.4}}
\put(0.45,0.6){\line(-1,2){0.2}}

\multiput(1,0.25)(0.05,0){1}%
{\circle{0.001}}
\multiput(0.9,0.3)(0.05,0){3}%
{\circle{0.001}}
\multiput(0.8,0.35)(0.05,0){5}%
{\circle{0.001}}
\multiput(0.7,0.4)(0.05,0){7}%
{\circle{0.001}}
\multiput(0.6,0.45)(0.05,0){9}%
{\circle{0.001}}

\multiput(0.55,0.5)(0.05,0){10}%
{\circle{0.001}}
\multiput(0.5,0.55)(0.05,0){11}%
{\circle{0.001}}

\multiput(0.45,0.6)(0.05,0){12}%
{\circle{0.001}}
\multiput(0.425,0.65)(0.05,0){12}%
{\circle{0.001}}
\multiput(0.4,0.7)(0.05,0){13}%
{\circle{0.001}}
\multiput(0.375,0.75)(0.05,0){13}%
{\circle{0.001}}
\multiput(0.35,0.8)(0.05,0){14}%
{\circle{0.001}}
\multiput(0.325,0.85)(0.05,0){14}%
{\circle{0.001}}
\multiput(0.3,0.9)(0.05,0){15}%
{\circle{0.001}}
\multiput(0.275,0.95)(0.05,0){15}%
{\circle{0.001}}
\multiput(0.25,1)(0.05,0){16}%
{\circle{0.001}}
\end{picture}
\caption{An example in $\mathbb Z^2$ of a measure $p$ that satisfies
condition \eqref{twosided} with respect to the 2 hyper-planes
orthogonal to $e^{(1)}$ and $e^{(2)}$ but it is not balanced and
consequently also not cyclic. With $\bullet$ we denote elements of
$\mathcal S(p)$, the dashed region represents $co(\mathcal S(p))$.
Condition \eqref{twosided} is not satisfied for the hyperplane $H$.}
\label{gugace}
\end{figure}
This measure is not cyclic. Indeed it is not possible to generate a
cycle using vectors in $\mathcal S(p)$ since $0\not \in co(\mathcal
S(p))$.

\smallskip

A simple example of a balanced not integrable probability measure on
$\mathbb Z^2$ is $p(x)=\frac{c}{|x|^{\frac 52}}$ where $c$ is a
suitable normalization constant. Another example is given by a
probability measure on $\mathbb Z^2$ such that
$$
\sum_{x_1\geq 0}p((x_1,0))|x_1|=\sum_{x_1\leq 0}p((x_1,0))|x_1|=\sum_{x_2\geq 0}p((0,x_2))|x_2|=\sum_{x_2\leq 0}p((0,x_2))|x_2|=+\infty\,.
$$
Then the above conditions guarantee that the measure is balanced and consequently cyclic, whatever are
the values assumed by the measure outside from the coordinate axis.

We discuss now in detail the one dimensional case. In one dimension
a balanced probability measure $p$ either belongs to $\mathcal
M_0\cap\mathcal M^1$ or satisfies
\begin{equation}
\sum_{x=1}^{+\infty}xp(x)=\sum_{x=1}^{+\infty}xp(-x)=+\infty\,.
\label{report}
\end{equation}
We now sketch an iterative procedure whose generalization is at the
core of the proof of Theorem \ref{florisa} and will be discussed
more in detail during its proof. This procedure generates a cyclic
decomposition of $p$ satisfying \eqref{report}. We remark that this case corresponds
to a not integrable balanced measure.  Given a measure
$p^l\in \mathcal M^{\leq 1}$ satisfying \eqref{report} we define
$$
\left\{
\begin{array}{l}
x^l_+:=\min\left\{x\geq 1\,:\, p^l(x)>0\right\}\,,\\
x^l_-:=\max\left\{x\leq -1\,:\, p^l(x)>0\right\}\,.
\end{array}
\right.
$$
We need to distinguish two cases
$$
\left\{
\begin{array}{ll}
p^l(x^l_+)x^l_++p^l(x^l_-)x^l_-
\geq 0\,, & \qquad(a)\,,\\
p^l(x^l_+)x^l_++p^l(x^l_-)x^l_- <0\,, & \qquad (b)\,.
\end{array}
\right.
$$
We consider the equivalence class of cycles
$$
[C^l]:=\left\{(x^l_+,n^l_+)\,,\, (x^l_-,n^l_-)\right\}\,,
$$
where $n^l_+$ and $n^l_-$ are determined requiring that
$\frac{n^l_+}{n^l_-}$ is an irreducible fraction and moreover
$\frac{n^l_+}{n^l_-}=-\frac{x^l_-}{x^l_+}$. The corresponding weight
will be
$$
\rho([C^l]):=\left\{
\begin{array}{ll}
\frac{p^l(x^l_-)(n^l_++n^l_-)}{n^l_-} & (a)\,,\\
\frac{p^l(x^l_+)(n^l_++n^l_-)}{n^l_+} & (b)\,.\\
\end{array}
\right.
$$
If we define $p^{l+1}:=p^l-\rho([C^l])p^{[C^l]}$ then $p^{l+1}\in
\mathcal M^{\leq 1}$ and still satisfies \eqref{report} so that we
can iterate the procedure starting from $p^1:=p-p(0)\delta_0$. By
construction we have $|\left\{\mathcal S(p^{l+1})\cap
[x_-^l,x_+^l]\right\}|\leq 1$ and condition \eqref{report}
guarantees that $\lim_{l\to +\infty}x^l_-=-\infty$ and $\lim_{l\to
+\infty}x^l_+=+\infty$. As a consequence we get
$$
p=p(0)\delta_0+\sum_{l=1}^{+\infty}\rho([C^l])p^{[C^l]}\,,
$$
that is a cyclic decomposition like \eqref{decomp}.

\smallskip

We call irreducible an equivalence class $[C]=\{w^1,\dots w^{|C|}\}$
such that for any non empty $\{v^1,\dots v^k\}\subset \{w^1,\dots
w^{|C|}\}$, where the inclusion is strict, it holds
$\sum_{i=1}^kv^i\neq 0$.

\smallskip

In lemma \ref{lemmaestrem}, stated and proved in section
\ref{santorogo}, we will characterize $\mathcal C^*$ a subset of the
irreducible equivalence classes of cycles. The second main result of
this section is the following.

\begin{Th}\label{teoesterm}
The set of extremal elements of $\mathcal B$ is
$\left\{p^{[C]}\right\}_{[C]\in \mathcal C^*}$. Moreover for any
element $p\in \mathcal B$ there exists a probability measure $\rho$
on $\mathcal C^*$ such that
\begin{equation}
p=\sum_{[C]\in \mathcal C^*}\rho([C])p^{[C]}\,. \label{airberlin}
\end{equation}
\end{Th}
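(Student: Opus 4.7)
The plan is to combine Theorem \ref{florisa} with a splitting argument that reduces every cyclic decomposition to one supported on irreducible cycles, and then to identify extremal points of $\mathcal{B}$ with irreducible cyclic measures via the defining property of irreducibility.

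Step 1 (Reduction to irreducible cycles). By Theorem \ref{florisa}, every $p\in\mathcal{B}$ admits some decomposition $p=\sum_{[C]\in\mathcal{C}}\rho([C])p^{[C]}$. I would first show that each $p^{[C]}$ is itself a finite convex combination of empirical measures of irreducible cycles. Indeed, if $[C]=\{(w^i,n_i)\}_{i=1}^k$ is reducible, a proper non-empty sub-multiset $\{(w^i,m_i)\}$ summing to zero yields two cycles $[C_1]=\{(w^i,m_i)\}$ and $[C_2]=\{(w^i,n_i-m_i)\}$ (after dropping zero multiplicities) and the identity
\[
p^{[C]} \;=\; \tfrac{|C_1|}{|C|}\, p^{[C_1]} \;+\; \tfrac{|C_2|}{|C|}\, p^{[C_2]}.
\]
Since $|C_1|,|C_2|<|C|$, iterating this splitting terminates. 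Substituting into the decomposition of $p$ and reordering the non-negative sum (justified by monotone convergence and Fubini as in \eqref{ben10}) yields an expression of $p$ as a convex combination over irreducible classes. Because irreducibility of $\{(w^i,n_i)\}$ forces $\gcd(n_i)=1$ (otherwise $(n_i/\gcd)$ would be a proper sub-multiset summing to zero), distinct irreducible equivalence classes yield distinct empirical measures, and grouping terms produces the decomposition \eqref{airberlin} indexed by $\mathcal{C}^*$ as characterized in Lemma \ref{lemmaestrem}.

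Step 2 (Extremality of $p^{[C]}$ for $[C]\in\mathcal{C}^*$). Fix $[C]=\{(w^i,n_i)\}_{i=1}^k$ irreducible and suppose $p^{[C]}=\lambda q+(1-\lambda)r$ with $q,r\in\mathcal{B}$ and $\lambda\in(0,1)$. From $q\preceq\lambda^{-1}p^{[C]}$ and $r\preceq(1-\lambda)^{-1}p^{[C]}$, the supports of $q$ and $r$ lie in $\{w^1,\dots,w^k\}$. Applying Step 1 to $q$, write $q=\sum_{[C']\in\mathcal{C}^*}\rho'([C'])p^{[C']}$ with each $[C']$ supported in $\{w^1,\dots,w^k\}$. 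I would then argue that irreducibility of $[C]$ forces the only irreducible cyclic measure supported in $\{w^1,\dots,w^k\}$ to be $p^{[C]}$ itself: any competing multiplicity vector $(m'_i)$ with $\sum m'_i w^i=0$ and $\mathrm{supp}\subseteq\{w^i:n_i>0\}$ would, together with the pointwise domination $q(w^i)\leq \lambda^{-1}n_i/|C|$, either violate an upper bound at some coordinate or allow one to extract a proper sub-multiset of $(n_i)$ summing to zero, contradicting irreducibility of $[C]$. Hence $\rho'$ concentrates on $[C]$, and $q=r=p^{[C]}$.

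Step 3 (Exhaustiveness). Conversely, any extremal $p\in\mathcal{B}$ admits a decomposition \eqref{airberlin} by Step 1; since each $p^{[C]}\in\mathcal{B}$ and $p$ is extremal, $\rho$ must be a Dirac mass, so $p=p^{[C^*]}$ for some $[C^*]\in\mathcal{C}^*$.

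The main obstacle is the uniqueness claim inside Step 2, namely that an irreducible $[C]$ with support $S$ is the only irreducible cycle whose empirical measure is supported in $S$. This reduces to a combinatorial analysis of the non-negative integer kernel $\{(a_i)\in\mathbb{Z}^k_{\geq 0}:\sum a_i w^i=0\}$ and its irreducible (Hilbert-basis) generators. Here Carath\'eodory's Theorem enters geometrically: since $0\in\mathrm{co}(\{w^1,\dots,w^k\})$, at most $d+1$ of the $w^i$ are needed in any minimal positive relation, bounding $|\mathrm{supp}(p^{[C]})|\leq d+1$ for any extremal $p^{[C]}$. This uniqueness, which I expect to be the precise content of Lemma \ref{lemmaestrem}, is the technical heart of the argument.
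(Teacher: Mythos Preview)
There is a genuine gap in Step~1. Your splitting procedure terminates at \emph{irreducible} equivalence classes, and you then assert that this yields a decomposition over $\mathcal{C}^*$. But $\mathcal{C}^*$ is a \emph{proper} subset of the irreducible classes: it consists only of those arising from vectors $w^1,\dots,w^k$ in general position with $0\in co(\{w^1,\dots,w^k\})^0$ (Lemma~\ref{lemmaestrem}). The paper gives an explicit counterexample immediately after that lemma: with $w^1=(-5,-5)$, $w^2=(1,1)$, $w^3=(2,2)$, the class $\{(w^1,1),(w^2,1),(w^3,2)\}$ is irreducible yet lies outside $\mathcal{C}^*$, since the $w^i$ are collinear. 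Its empirical measure is not extremal in $\mathcal{B}$: it is a nontrivial mixture of the $\mathcal{C}^*$ measures supported on $\{w^1,w^2\}$ and $\{w^1,w^3\}$. So your splitting stops too early, and the identification ``irreducible $=\mathcal{C}^*$'' that you invoke at the end of Step~1 is false.

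The paper sidesteps this entirely: the iterative construction inside the proof of Theorem~\ref{florisa} already uses Carath\'eodory's Theorem at each step to select vectors in general position, so by Lemma~\ref{lemmaestrem} every $q_i$ produced there is automatically $p^{[C]}$ for some $[C]\in\mathcal{C}^*$. Thus \eqref{airberlin} comes for free, with no post-processing of a generic cyclic decomposition. For extremality, the argument is also simpler than your Step~2 suggests: if $[C]\in\mathcal{C}^*$ and $p^{[C]}=\lambda q+(1-\lambda)r$ with $q,r\in\mathcal{B}$, then $q,r$ are supported in $\{w^1,\dots,w^k\}$, hence integrable, hence mean zero; but since the $w^i$ are in general position there is a \emph{unique} element of $\mathbb{S}^k$ solving $\sum\mu_i w^i=0$ (equation~\eqref{simplex}), forcing $q=r=p^{[C]}$. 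The Hilbert-basis machinery you anticipate as the ``technical heart'' is unnecessary --- the general-position hypothesis built into $\mathcal{C}^*$ does all the work.
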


We give also a different characterization of cyclic measures.  We
denote by $\overrightarrow{\mathcal M_0}$ the monotone
non-decreasing weak closure of $\mathcal M_0$. This means that $p
\in \overrightarrow{\mathcal M_0}$ if there exists a sequence
$p_n\in \mathcal M_0$ such that $p_n\succeq p_{n-1}$ and
$p=\lim_{n\to +\infty}p_n$ in the weak sense. Then we have the
following alternative characterization.
\begin{Th}
$$
\mathcal B=\overrightarrow{\mathcal M_0}\cap \mathcal M^1\,.
$$
\label{procbreve}
\end{Th}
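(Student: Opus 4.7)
The plan is to prove the two inclusions separately, with the cyclic decomposition of Theorem \ref{florisa} doing almost all of the work.

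For the inclusion $\overrightarrow{\mathcal{M}_0}\cap\mathcal{M}^1\subseteq\mathcal{B}$, I would start from $p\in\overrightarrow{\mathcal{M}_0}\cap\mathcal{M}^1$ and pick an increasing sequence $p_n\in\mathcal{M}_0$ with $p_n\to p$ weakly (equivalently pointwise, by the tightness remark at the beginning of Section \ref{ciclzd}). By the observation following Definition \ref{bal111}, every integrable mean-zero measure satisfies \eqref{twosided} on every hyperplane $H\ni 0$, so
$$\sum_{x\in H^+}p_n(x)\,d(x,H)=\sum_{x\in H^-}p_n(x)\,d(x,H)$$
for every $n$. Since $p_n\preceq p_{n+1}\preceq p$ pointwise and $d(\cdot,H)\geq 0$, both sides are non-negative and monotone increasing in $n$; the monotone convergence theorem then pushes the equality to the limit (with possibly the value $+\infty$ on both sides), showing that $p$ is balanced.

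For the opposite inclusion $\mathcal{B}\subseteq\overrightarrow{\mathcal{M}_0}\cap\mathcal{M}^1$, I would invoke Theorem \ref{florisa} to write any $p\in\mathcal{B}$ as $p=\sum_{[C]\in\mathcal{C}}\rho([C])\,p^{[C]}$ for some probability measure $\rho$ on $\mathcal{C}$. Fix any enumeration $\mathcal{C}=\{[C_1],[C_2],\dots\}$ and set the partial sums
$$p_n:=\sum_{i=1}^{n}\rho([C_i])\,p^{[C_i]}\,.$$
Every $p^{[C_i]}$ is a purely cyclic measure, hence is supported on finitely many points and has mean zero, so each $p_n$ is a finite non-negative combination of mean-zero integrable measures and therefore belongs to $\mathcal{M}_0$ (its total mass is $\sum_{i\leq n}\rho([C_i])\leq 1$). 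By construction $p_n\preceq p_{n+1}$, and the pointwise interpretation \eqref{ben10} of the cyclic decomposition gives $p_n(x)\uparrow p(x)$ for every $x\in\mathbb{Z}^d$. This is exactly weak convergence in $\mathcal{M}^{\leq 1}$ under the standing tightness assumption, so $p\in\overrightarrow{\mathcal{M}_0}$; since $p\in\mathcal{M}^1$ by assumption, we are done.

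The substantive content is thus Theorem \ref{florisa}; once the cyclic decomposition is in hand there is no real obstacle, and the only points requiring care are verifying that the finite truncations $p_n$ remain in $\mathcal{M}_0$ (this needs finite support and mean zero of each $p^{[C]}$, both automatic) and that the monotone non-decreasing pointwise convergence $p_n\uparrow p$ qualifies as weak convergence in the sense used to define $\overrightarrow{\mathcal{M}_0}$ — which it does by the tightness equivalence recalled at the start of the section.
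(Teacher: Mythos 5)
Your proof is correct. The inclusion $\mathcal B\subseteq\overrightarrow{\mathcal M_0}\cap \mathcal M^1$ is handled exactly as in the paper: invoke Theorem \ref{florisa} and take the monotone partial sums of the cyclic decomposition, which are finitely supported, mean-zero elements of $\mathcal M^{\leq 1}$ increasing pointwise (hence weakly, by tightness) to $p$. For the reverse inclusion your route is genuinely different and more elementary. The paper first replaces $\mathcal B$ by the set of cyclic measures (via Theorem \ref{florisa}) and then shows that $p\in\overrightarrow{\mathcal M_0}\cap\mathcal M^1$ is cyclic: it writes $p=\sum_n\Delta^n$ with $\Delta^n:=p^n-p^{n-1}\in\mathcal M_0$, applies the constructive (balanced implies cyclic) direction of Theorem \ref{florisa} to each increment --- these are sub-probability measures, so this tacitly uses the remark at the end of subsection \ref{ciclzd} that the decomposition extends beyond $\mathcal M^1$ --- and sums the resulting measures $\rho^n$ on $\mathcal C^*$, thereby producing an explicit cyclic decomposition of $p$ as a by-product. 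You instead verify Definition \ref{bal111} directly: each $p_n\in\mathcal M_0$ satisfies \eqref{twosided} by the elementary computation following Definition \ref{bal111} (which is valid for any integrable mean-zero measure, not only probabilities), and since monotonicity together with pointwise convergence gives $p_n(x)\uparrow p(x)$ for every $x$, the monotone convergence theorem applied separately to the two sides of \eqref{twosided}, for every hyperplane $H\ni 0$, transports the equality (possibly as $+\infty=+\infty$) to $p$. Your version thus needs only the easy ``mean zero implies balanced'' observation and avoids invoking the hard direction of Theorem \ref{florisa} for non-probability increments; what it does not yield, and the paper's argument does, is an explicit cyclic decomposition of $p$ assembled from those of the approximating $p^n$. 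Both arguments are complete.
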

The monotone requirement is necessary since it is easy to see that
$\overline{\mathcal M_0}\cap \mathcal M^1=\mathcal M^1$, where
$\overline{\mathcal M_0}$ is the weak closure of $\mathcal M_0$.

\smallskip

In this subsection we restricted our discussion to probability
measures but the arguments could be adapted to positive measures
having finite mass.

\subsection{Cyclic random walks on a finite set}
\label{lucilla} Both a discrete time and a continuous time Markov
chain on a finite or countable set $V$ can be described in terms of
a oriented weighted graph $(V,E,r)$. Here $V$ is the set of
vertices, $E\subseteq V\times V$ is the set of oriented edges and
$r:E\to \mathbb R^+$ is the weight function. For simplicity here and
hereafter we consider the case $(x,x)\not \in E$ for any $x\in V$,
the general case can be easily handled. The set of edges $E$ is
determined by the weight function by the requirement that $(x,y)\in
E$ if and only if $r(x,y)>0$. For this reason we can also denote
a weighted graph simply by the pair $(V,r)$. The set of edges having
positive weight is denoted by $E(r)$. There is a natural partial
order among weighted graphs having the same set of vertices. We say
that $(V,r)\preceq (V,r')$ if $r(x,y)\leq r'(x,y)$ for any $(x,y)\in
E(r)$. The set of all non negative weights $r: V\times V\to \mathbb R^+$
is denoted by $W$.

In the case of a discrete time random walk we fix the weights in
such a way that
$$
r(x,y):=\mathbb P(X_{n+1}=y|X_n=x)\,.
$$
In this case clearly the normalization condition
\begin{equation}
\sum_{\{y\in V : (x,y)\in E(r)\}}r(x,y)=1\,, \ \ \ \ \forall x\in
V\,,\label{norm}
\end{equation}
has to be satisfied. In the case of continuous time Markov chains we
identify the weight $r(x,y)$ with the rate of jump from $x$ to $y$.
\smallskip

A cycle $C$, on an oriented graph $(V,E)$, is a finite sequence
$C:=(x^0,x^1,\dots,x^{n-1},x^0)$ of distinct elements $x^i\in V$
such that for any $i=0,\dots,n-1$ we have $(x^i,x^{i+1})\in E$ where
the sum in the indices is modulo n. We say that an edge $(x,y)$
belongs to the cycle $C$ and write $(x,y)\in C$ if there exists an
$i$ such that $(x,y)=(x^i,x^{i+1})$. In this framework it is natural
to identify two cycles if they contain the same set of edges. More
precisely given two cycles $C=(x^0,x^1,\dots,x^{n-1},x^0)$ and
$C'=(y^0,y^1,\dots,y^{n-1},y^0)$, we say that $C\sim C'$ if there
exists a $j\in \{0,1,\dots,n-1\}$ such that $y^i=x^{i+j}$,
$i=0,1,\dots,n-1$, where the sum in the indices is modulo $n$. As
before we call $[C]$ the equivalence class of cycles having $C$ as a
representant. We call also $\mathcal C$ the set of equivalence
classes of cycles.

We associate to the cycle $C$ the weighted graph, that we call a
purely cyclic weighted graph, determined by the weight function
\begin{equation}
r^{C}(x,y):=\left\{
\begin{array}{ll}
1 & \hbox{if}  \ (x,y)\in C\,,\\
0 & \hbox{otherwise}\,.
\end{array}
\right. \label{pesociclo}
\end{equation}
Note that if $C\sim C'$ then $r^C=r^{C'}$ so that we can use the
notation $r^{[C]}$ without ambiguities. Moreover a purely cyclic
weighted graph uniquely determine an element of $\mathcal C$.

\begin{Def}
\label{cipolla}
We call a weighted graph $(V,r)$ cyclic if there
exists a positive measure $\rho$ on $\mathcal C$ such that
\begin{equation}
r=\sum_{[C]\in \mathcal C}\rho([C])r^{[C]}\,.\label{decgr}
\end{equation}
\end{Def}

\begin{Def}\label{bal222}
We will say that the weight $r$ is balanced at $x\in V$ if it
satisfies the condition
\begin{equation}
\sum_{\left\{y:(y,x)\in E(r)\right\}}r(y,x)=\sum_{\left\{y:(x,y)\in
E(r)\right\}}r(x,y)\,. \label{balancing}
\end{equation}
The weight $r$ is balanced if \eqref{balancing} holds at every $x\in
V$.
\end{Def}
We use the term balanced with two different meanings, one determined
by definition \ref{bal111} and the other one determined by
definition \ref{bal222}. There is no risk of confusion since one
refers to elements of $\mathcal M^{\leq 1}$ and the other one to
elements of $W$. The main result of this subsection is the
following.

\begin{Th}
A finite weighted graph $(V,r)$ is cyclic if and only if it is
balanced.\label{teofin}
\end{Th}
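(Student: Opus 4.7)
The plan is to prove the two directions separately, with the forward direction being essentially immediate and the reverse direction handled by an iterative cycle-extraction argument.

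For the easy direction, I would observe that for any cycle $C = (x^0, x^1, \ldots, x^{n-1}, x^0)$, the weight $r^{[C]}$ defined in \eqref{pesociclo} is itself balanced at every vertex: each $x^i$ on the cycle has exactly one incoming edge of weight $1$ (namely $(x^{i-1}, x^i)$) and one outgoing edge of weight $1$ (namely $(x^i, x^{i+1})$), while any vertex off the cycle has no incident edges of positive weight. Since the balancing condition \eqref{balancing} is linear in $r$ and preserved under non-negative linear combinations, any $r$ admitting a cyclic decomposition \eqref{decgr} is balanced. Note that because $V$ is finite, the sum in \eqref{decgr} must be effectively finite (only countably many $[C]$ carry positive mass, and summing a non-negative series of weight functions supported on a fixed finite edge set is unproblematic), so there is no convergence subtlety here.

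For the reverse direction, I would argue by induction on $|E(r)|$, the number of edges carrying strictly positive weight. If $|E(r)| = 0$ then $r \equiv 0$ and we take $\rho \equiv 0$. Otherwise, I would extract a cycle contained in $E(r)$ by the following walk argument. Pick any $(x^0, x^1) \in E(r)$. Balancing at $x^1$ gives
\[
\sum_{\{y : (x^1, y) \in E(r)\}} r(x^1, y) = \sum_{\{y : (y, x^1) \in E(r)\}} r(y, x^1) \geq r(x^0, x^1) > 0,
\]
so there exists $(x^1, x^2) \in E(r)$. Iterating, at each step $x^i$ the same argument (applied to the incoming edge $(x^{i-1}, x^i)$) produces $(x^i, x^{i+1}) \in E(r)$. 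Since $V$ is finite, some vertex must repeat; let $j < k$ be the first repetition $x^j = x^k$, and let $C := (x^j, x^{j+1}, \ldots, x^{k-1}, x^j)$. This is a cycle by construction with all its edges in $E(r)$.

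Setting $\alpha := \min_{(x,y) \in C} r(x,y) > 0$ and defining $r' := r - \alpha\, r^{[C]}$, I would verify two things. First, $r' \in W$, since $\alpha$ was chosen as the minimum and $r^{[C]}$ takes only values $0$ and $1$. Second, $r'$ is again balanced: at a vertex off $C$, neither side of \eqref{balancing} changes, while at a vertex on $C$ both sides decrease by exactly $\alpha$. By the choice of $\alpha$, at least one edge $(x^*, y^*) \in C$ with $r(x^*, y^*) = \alpha$ satisfies $r'(x^*, y^*) = 0$, so $|E(r')| < |E(r)|$. By the inductive hypothesis $r'$ admits a cyclic decomposition $r' = \sum_{[C']} \rho'([C']) r^{[C']}$, and then $r = \alpha\, r^{[C]} + r'$ is the desired decomposition for $r$.

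The only step requiring genuine care is the verification that subtracting $\alpha\, r^{[C]}$ preserves balancing, which amounts to the observation that on the cycle each vertex contributes equally to the incoming and outgoing flux --- the same observation that made the forward direction trivial. Finiteness of $V$ makes everything else routine and rules out any convergence or tightness concerns; the extension to infinite graphs (mentioned as forthcoming in \cite{BFG}) is presumably where the real analytic difficulties would lie.
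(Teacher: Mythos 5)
Your proof is correct and follows essentially the same route as the paper: use the balancing condition to walk along positive-weight edges until a vertex repeats, extract the resulting cycle, subtract its minimal weight, check balancedness is preserved with strictly fewer positive edges, and conclude by finite descent. The only cosmetic differences are that you phrase the descent as induction on $|E(r)|$ and start the walk at an arbitrary edge, whereas the paper starts at a globally minimal-weight edge, a choice that is not actually needed.
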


\begin{Rem}
Note that all the results of this paper can be formulated in terms
of cyclic decompositions of weighted graphs without any reference to
Markov models. A natural way to associate a weighted graph to a
Markov model is the one discussed at the beginning of this
subsection but it is clearly not the only one. For example given a
positive measure $\pi$ on $V$ and a random walk with jump rates
$c(x,y)$ we can associate to this pair a weighted graph with weights
\begin{equation}
r(x,y):=\pi(x)c(x,y)\,.\label{biblioroma}
\end{equation}
The definition of centered random walks in \cite{Math} is based for
example on the properties of a weighted graph defined in this way.
All the results of the present paper can be clearly applied also in
this case. Note that the validity of the balancing condition of Definition
\ref{bal222} for the rates in \eqref{biblioroma} is equivalent to
require that $\pi$ is invariant for the rates $c$.
\end{Rem}

\subsection{Cyclic random walks on a finite graph with topology}

We consider the $d$ dimensional continuous torus $\mathbb
T^d=\mathbb R^d/\mathbb Z^d$ of side length $1$. Any element of the
torus is an equivalence class that we identify with any of its
representant. On the continuous torus we embed a $d-$dimensional
discrete torus with mesh $\frac 1 N$. This is a graph whose set of
vertices is
\begin{equation}
V_N:=\left\{\mathbb T^d\ni x=\left(x_1,\dots,x_d\right)\, : \
x_1,\dots,x_d=0,\frac 1 N, \dots, \frac{N-1}{N}\right\}\,.
\end{equation}
The set of oriented edges $E_N$ contains all the pairs $(x,y)$, with
$x,y\in V_N$ and such that $d(x,y)\leq 1/N$, where the distance is
on $\mathbb T^d$. For simplicity we consider a cubic grid but all
the arguments can be repeated also in the case of a grid with
$N_1\times N_2\times \dots \times N_d$ sites.

We define $E'_N\subset E_N$ the collection of oriented edges
$(x,y)\in E_N$ such that there exist $\hat x, \hat y \in \mathbb
R^d$ in the same equivalence classes respectively of $x,y$ and such
that $\hat y=\hat x+e^{(i)}/N$ for some $i$. Note in particular that
for example $\left((\frac{N-1}{N}, x_2,\dots ,x_d),(0,x_2,\dots
,x_d)\right)\in E_N'$. This is the exact meaning that we give to
expressions like $(x,y)\in E_N$ with $y=x\pm e^{(i)}/N$.

The embedding of the discrete torus on the continuous one defines in
a natural way a cellular decomposition of $\mathbb T^d$. Every cell
of maximal dimension is a $d$ dimensional hyper-cube of side length
$\frac 1N$ having vertices in $V_N$. Every 2 dimensional cell of the
cellular decomposition is a square having vertices
$$
\left\{x,x+e^{(i)}/N,x+e^{(j)}/N,x+e^{(i)}/N+e^{(j)}/N\right\}\,,
$$
for some $x\in V_N$ and some $i\neq j$. Every 2 dimensional cell can
be oriented in 2 possible ways. An orientation is a choice among the
two possible elementary cycles
\begin{equation}
\left\{ \begin{array}{l}
\left(x,x+e^{(i)}/N,x+e^{(i)}/N+e^{(j)}/N,x+e^{(j)}/N,x\right)\,,\\
\left(x,x+e^{(j)}/N,x+e^{(i)}/N+e^{(j)}/N,x+e^{(i)}/N,x\right)\,,
\end{array}
\right. \label{occhiblu}
\end{equation}
associated to the 2 dimensional cell (see figure \ref{orcel} cases
(b) and (c)). We call $F_N$ the set of oriented two dimensional
cells of the cellular decomposition. Conventionally the two possible
orientations are called clockwise and anticlockwise. If $f\in F_N$
we call $f^c$ the element of $F_N$ associated to the same 2
dimensional cell but corresponding to the opposite orientation. We
call $F'_N\subset F_N$ the subset of anticlockwise oriented 2
dimensional cells, this means the cells with associated the
orientation
$$
\left(x,x+e^{(\min
\left\{i,j\right\})}/N,x+e^{(i)}/N+e^{(j)}/N,x+e^{(\max\left\{i,j\right\})}/N,x\right)\,.
$$

The discrete torus is a finite graph so that cycles and equivalence
classes of cycles are defined as in subsection \ref{lucilla}. In
particular we still call $\mathcal C$ the set of equivalence classes
of cycles. Given a cycle $C=(z^0,z^1,\dots, z^{n-1},z^0)$ on the
discrete torus we associate to it the displacement vectors $w^1, \dots w^n$ defined by
\begin{equation}
w^i:=\left\{
\begin{array}{ll}
\frac{e^{(j)}}{N} & \textrm{if}\ z^i=z^{i-1}+\frac{e^{(j)}}{N}\,,\\
-\frac{e^{(j)}}{N} & \textrm{if}\ z^i=z^{i-1}-\frac{e^{(j)}}{N}\,.
\end{array}
\right.
\end{equation}
\begin{figure}
\setlength{\unitlength}{5cm}
\begin{displaymath}
\put(0,-0.35){$(a)$} \put(0.4,-0.35){$(b)$}
\put(0.9,-0.35){$(c)$}\xymatrix{ \bullet \ar@/^/[d] &\bullet \ar[r]&
\bullet \ar[d] &
\bullet \ar[d]& \bullet \ar[l]\\
\bullet \ar@/^/[u] & \bullet \ar[u]& \bullet \ar[l]& \bullet \ar[r]
& \bullet \ar[u]
 }
\end{displaymath}
\caption{Elementary cycles associated to one dimensional cells $(a)$
and two dimensional cells $(b)$ and $(c)$, respectively clockwise
and anticlockwise oriented.} \label{orcel}
\end{figure}
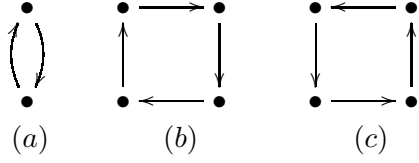
Given a cycle $C=(z^0,z^1,\dots, z^{n-1},z^0)$ on the discrete torus
we can naturally associate to it also a continuous closed curve on
$\mathbb T^d$. This is the projection on $\mathbb T^d$ of a
continuous curve on $\mathbb R^d$ whose parametrization $\left\{\hat
x(s)\right\}_{s\in [0,1]}$ is defined by the positions $\hat
x(0)=z^0$ and
$$
\dot{\hat x}(s)=nw^i\,, \qquad s\in\left[\frac{i-1}{n},
\frac{i}{n}\right)\,.
$$
Since the projection is a closed curve on $\mathbb T^d$ then
$\hat x(1)-\hat x(0)$ is a vector with integer coefficients. A
continuous closed curve on $\mathbb T^d$ is called homotopically
trivial if it can be continuously deformed to a single point. More
precisely a continuous closed curve
$\left\{x(s)\right\}_{s\in[0,1]}$ is homotopically trivial if there
exists a continuous map $X:[0,1]\times[0,1]\to \mathbb T^d$
such that $X(s,0)=x(s)$, $X(s,1)=x^*$ for any $s\in
[0,1]$, where $x^*\in \mathbb T^d$ is a fixed element and
$X(1,t)=X(0,t)$ for any
$t$. The discrete cycle $C$ will be called homotopically trivial if
the associated continuous curve on the torus is homotopically trivial. It is easy to see that this
condition is equivalent to require that
\begin{equation}
\sum_{i=1}^nw^i=0\,, \label{rivestimento}
\end{equation}
where $w^i$ are the displacement vectors of the cycle. By $\mathcal
C^* \subseteq \mathcal C$ we denote the subset of equivalence
classes associated to homotopically trivial cycles.

\smallskip

We define also a subset $\mathcal C^e\subseteq \mathcal C^*$ of
equivalence classes of elementary cycles. Elementary cycles are
naturally associated to one and two dimensional cells of the
cellular decomposition. Given $x,y\in V_N$ such that $d(x,y)=1/N$ we
call $[C_{\{x,y\}}]$ the equivalence class containing the elementary
cycle $C_{\{x,y\}}:=(x,y,x)$ (see Figure \ref{orcel} case (a)).

As already discussed to every element $f\in F_N$ corresponds a cycle
of the type \eqref{occhiblu}. We call $[C_f]$ the corresponding
equivalence class. The set of equivalence classes of elementary
cycles $\mathcal C^e$ is defined as
$$
\mathcal C^e:=\left(\left\{[C_{\{x,y\}}]\right\}_{(x,y)\in
E_N'}\right)\cup \left(\left\{[C_{f}]\right\}_{f\in F_N}\right)\,.
$$

Given $(x,y)\in E_N$ and $f\in F_N$ we write $(x,y)\in f$ if the
oriented edge $(x,y)$ belongs to any representant of $[C_f]$. A pair
of elements of $F_N$ associated to adjacent cells is said to be
oriented in agreement if no elements of $E_N$ belong to both. On the
two dimensional torus every pair of elements of $F_N'$ is oriented
in agreement (see figure \ref{fig3}). This is possible since the two
dimensional torus is an orientable surface.
\begin{figure}
\setlength{\unitlength}{5cm}
\begin{picture}(1,1)

\multiput(0,0)(0.2,0){5}%
{\line(0,1){0.2}}
\multiput(0,0.2)(0.2,0){5}%
{\line(0,1){0.2}}
\multiput(0,0.4)(0.2,0){5}%
{\line(0,1){0.2}}
\multiput(0,0.6)(0.2,0){5}%
{\line(0,1){0.2}}
\multiput(0,0.8)(0.2,0){5}%
{\line(0,1){0.2}}
\multiput(0,0)(0.2,0){5}%
{\line(1,0){0.2}}
\multiput(0,0.2)(0.2,0){5}%
{\line(1,0){0.2}}
\multiput(0,0.4)(0.2,0){5}%
{\line(1,0){0.2}}
\multiput(0,0.6)(0.2,0){5}%
{\line(1,0){0.2}}
\multiput(0,0.8)(0.2,0){5}%
{\line(1,0){0.2}}
\multiput(0,1)(0.2,0){5}%
{\line(1,0){0.2}}
\multiput(1,0)(0,0.2){5}%
{\line(0,1){0.2}}

\put(0.1,0.1){\circle{0.14}}\put(0.17,0.11){\vector(0,1){0.00001}}
\put(0.3,0.3){\circle{0.14}}\put(0.37,0.31){\vector(0,1){0.00001}}
\put(0.5,0.3){\circle{0.14}}\put(0.57,0.31){\vector(0,1){0.00001}}
\put(0.3,0.1){\circle{0.14}}\put(0.37,0.11){\vector(0,1){0.00001}}
\put(0.5,0.1){\circle{0.14}}\put(0.57,0.11){\vector(0,1){0.00001}}
\put(0.7,0.1){\circle{0.14}}\put(0.77,0.11){\vector(0,1){0.00001}}
\put(0.9,0.1){\circle{0.14}}\put(0.97,0.11){\vector(0,1){0.00001}}
\put(0.1,0.3){\circle{0.14}}\put(0.17,0.31){\vector(0,1){0.00001}}
\put(0.7,0.3){\circle{0.14}}\put(0.77,0.31){\vector(0,1){0.00001}}
\put(0.9,0.3){\circle{0.14}}\put(0.97,0.31){\vector(0,1){0.00001}}

\put(0.1,0.5){\circle{0.14}}\put(0.17,0.51){\vector(0,1){0.00001}}
\put(0.7,0.5){\circle{0.14}}\put(0.77,0.51){\vector(0,1){0.00001}}
\put(0.9,0.5){\circle{0.14}}\put(0.97,0.51){\vector(0,1){0.00001}}
\put(0.3,0.5){\circle{0.14}}\put(0.37,0.51){\vector(0,1){0.00001}}
\put(0.5,0.5){\circle{0.14}}\put(0.57,0.51){\vector(0,1){0.00001}}

\put(0.1,0.7){\circle{0.14}}\put(0.17,0.71){\vector(0,1){0.00001}}
\put(0.7,0.7){\circle{0.14}}\put(0.77,0.71){\vector(0,1){0.00001}}
\put(0.9,0.7){\circle{0.14}}\put(0.97,0.71){\vector(0,1){0.00001}}
\put(0.3,0.7){\circle{0.14}}\put(0.37,0.71){\vector(0,1){0.00001}}
\put(0.5,0.7){\circle{0.14}}\put(0.57,0.71){\vector(0,1){0.00001}}

\put(0.1,0.9){\circle{0.14}}\put(0.17,0.91){\vector(0,1){0.00001}}
\put(0.7,0.9){\circle{0.14}}\put(0.77,0.91){\vector(0,1){0.00001}}
\put(0.9,0.9){\circle{0.14}}\put(0.97,0.91){\vector(0,1){0.00001}}
\put(0.3,0.9){\circle{0.14}}\put(0.37,0.91){\vector(0,1){0.00001}}
\put(0.5,0.9){\circle{0.14}}\put(0.57,0.91){\vector(0,1){0.00001}}

\end{picture}
\caption{Elements of $F_N'$ of the two dimensional torus. Any pair
is oriented in agreement. Opposite sides of the square are identified. } \label{fig3}
\end{figure}
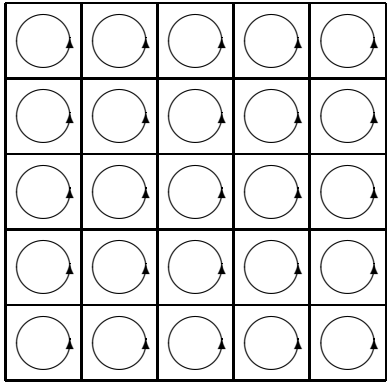

\smallskip
Given a continuous time random walk on the discrete torus its
transition rates identify a weight $r\in W$. The discrete torus is a
finite graph and a decomposition like \eqref{decgr} exists if and
only if the balancing condition of Definition \ref{bal222} holds. We investigate
under which conditions decompositions like
\begin{equation}
r=\sum_{[C]\in \mathcal C^*}\rho([C])r^{[C]}\,, \label{dec*}
\end{equation}
or
\begin{equation}
r=\sum_{[C]\in \mathcal C^e}\rho([C])r^{[C]}\,, \label{decel}
\end{equation}
hold. Recall that $\rho$ is a positive measure and $r^{[C]}$ is the
element of $W$ associated to any cycle in $[C]$ like in
\eqref{pesociclo}. Let us call $R^*\subseteq W$ and $R^e\subseteq W$
the sets of weights for which respectively a decomposition like
\eqref{dec*} and like \eqref{decel} holds. Clearly it holds
$R^e\subseteq R^*$.

\smallskip
Before proceeding we need to introduce some notions and terminology.
We refer to \cite{DKT}, \cite{daf}, \cite{Me} for more details. We
call $\Lambda^0$ the $|V_N|-$dimensional vector space of the real
functions $f: V_N\to \mathbb R$.

We call a function $\phi:E_N\to \mathbb R$ a discrete vector field
if it satisfies the condition
\begin{equation}
\phi(x,y)=-\phi(y,x)\,, \qquad \forall (x,y)\in E_N\,.\label{defdvf}
\end{equation}
We call $\Lambda^1$ the $|E'_N|-$dimensional vector space of
discrete vector fields. We endow $\Lambda^1$ of the scalar product $\langle
\cdot,\cdot \rangle_{1}$ defined as
\begin{equation}
\langle \phi,\phi'\rangle_{1}:=\sum_{(x,y)\in
E'_N}\phi(x,y)\phi'(x,y)\,. \label{scpr}
\end{equation}
Due to \eqref{defdvf} the value of \eqref{scpr} does not depend on
the specific choice of $E'_N$. On $W$ we define a projection operator that
associates to a weight $r$ a discrete vector field $\phi^r\in
\Lambda^1$ defined as
\begin{equation}
\phi^r(x,y):=r(x,y)-r(y,x).\nonumber
\end{equation}
Conversely to any $\phi\in\Lambda^1$ we canonically associate a weight
$r^\phi\in W$ defined as
\begin{equation}
r^\phi(x,y):=[\phi(x,y)]_+\,, \nonumber
\end{equation}
where $[\cdot]_+$ denotes the positive part of a real number
\begin{equation}
[a]_+:=\left\{
\begin{array}{ll}
a & \hbox{if}\ a\geq 0\,,\\
0 & \hbox{otherwise}\,.
\end{array}
\right.\nonumber
\end{equation}
Note that $\min \left\{r^{\phi}(x,y),r^{\phi}(y,x)\right\}=0$ and
$r^\phi\in R(\phi)$, where
$$
R(\phi):=\left\{r\in W\,:\,\phi^r=\phi\right\}\,.
$$
We stress that $R(0)$ coincides with the set of symmetric rates,
i.e. such that $r(x,y)=r(y,x)$. Given a positive weight $r$ we can
decompose it into the sum of two positive weights as
\begin{equation}
r=s+r^{\phi^r}. \label{nasce}
\end{equation}
Clearly $s\in R(0)$ and it is called the symmetric part of $r$. We
can deduce
\begin{equation}
R(\phi)=\left\{r\,:\, r=s+r^\phi\,,\ s\in
R(0)\right\}=r^\phi+R(0)\,. \label{baku}
\end{equation}
A simple consequence of \eqref{baku} is that there exists a minimal
element, with respect to the partial order $\preceq$, on $R(\phi)$
that is $r^\phi$.

We call a two chain a map $\psi:F_N\to \mathbb R$ that satisfies the
additional property
$$
\psi(f)=-\psi(f^c)\,, \qquad \forall f\in F_N\,.
$$
We call $\Lambda^2$ the $|F_N'|-$dimensional vector space of the
2-chains.

\smallskip

We define the co-boundary operator $\delta:\Lambda^i\to
\Lambda^{i+1}$, $i=0,1$ as follows. Given  a function $f\in
\Lambda^0$ we define $\delta f \in \Lambda^1$ as
$$
\delta f(x,y):=f(y)-f(x)\,, \qquad (x,y)\in E_N\,.
$$
A discrete vector field obtained in this way is called a gradient.
Given $\phi \in \Lambda^1$ we define $\delta \phi\in \Lambda^2$ as
$$
\delta \phi(f):=\sum_{\left\{(x,y)\in E_N\,:\,(x,y)\in
f\right\}}\phi(x,y)\,, \qquad f\in F_N\,.
$$
The value $\delta \phi(f)$ is called the value of the circulation of
the vector field $\phi$ around $f$.

We define also a boundary operator $d:\Lambda^i\to \Lambda^{i-1}$,
$i=1,2$. Given $\psi\in \Lambda^2$ we define $d\psi \in \Lambda^1$
as
\begin{equation}
d\psi(x,y):=\sum_{\left\{f\in F_N\,:\, (x,y)\in f\right\}}\psi(f)\,,
\ \ \ \ (x,y)\in E_N\,. \label{defdd}
\end{equation}
Given $\phi\in \Lambda^1$ we define $d\phi\in \Lambda^0$ as
\begin{equation}
d\phi(x):=\sum_{\left\{y\in V_N\,:\, (x,y)\in
E_N\right\}}\phi(x,y)\,, \qquad x\in V_N\,. \label{ddber}
\end{equation}
The r.h.s of \eqref{ddber} is called the discrete divergence at $x$
of $\phi$.

It is easy to verify according to our definitions that for any
$\psi\in \Lambda^2$ it holds
\begin{equation}
d(d\psi)=0\,.\label{dd=0}
\end{equation}

\smallskip

The one dimensional case is elementary and will be considered
separately. Here we restrict to dimensions $d\geq 2$. We denote by
$d\Lambda^2\subseteq \Lambda^1$ the set of vector fields $\phi$ for
which there exists a $\psi\in \Lambda^2$ with $\phi=d\psi$. A
characterization of $d\Lambda^2$ will be discussed in terms of a
discrete Hodge decomposition in section \ref{qui}. It can be
summarized as follows. A discrete vector field $\phi$ belongs to
$d\Lambda^2$ if and only if $d \phi(x)=0$ for any $x$ and
$$
\sum_{x\in V_N}\phi\left(x,x+e^{(i)}/N\right)=0\,, \qquad i=1,\dots
,d\,.
$$
We have the following result.
\begin{Le}
A necessary condition to have $r\in R^*$ is that $\phi^r\in
d\Lambda^2$. The same holds for $R^e$. \label{neclass}
\end{Le}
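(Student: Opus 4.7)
First I would exploit the linearity of the map $r\mapsto \phi^r$, which follows immediately from $\phi^r(x,y)=r(x,y)-r(y,x)$. If $r\in R^*$ decomposes as $r=\sum_{[C]\in \cC^*}\rho([C])r^{[C]}$, then
$$
\phi^r \;=\; \sum_{[C]\in \cC^*}\rho([C])\,\phi^{r^{[C]}},
$$
and since $d\Lambda^2\subseteq \Lambda^1$ is a linear subspace, it suffices to prove that $\phi^{r^{[C]}}\in d\Lambda^2$ for every single homotopically trivial class $[C]\in \cC^*$. The statement for $R^e$ is then automatic from the inclusion $R^e\subseteq R^*$.

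Second, I would apply the characterization of $d\Lambda^2$ recalled just before the lemma: a discrete vector field $\phi$ lies in $d\Lambda^2$ if and only if $d\phi(x)=0$ for every $x\in V_N$ and $\sum_{x\in V_N}\phi\bigl(x,x+e^{(i)}/N\bigr)=0$ for $i=1,\dots,d$. Since the vertices of $C=(z^0,\dots,z^{n-1},z^0)$ are distinct, no edge of $C$ is traversed in both orientations, hence
$$
\phi^{r^{[C]}}(x,y) \;=\; \begin{cases} +1 & \text{if } (x,y)\in C,\\ -1 & \text{if } (y,x)\in C,\\ 0 & \text{otherwise.}\end{cases}
$$
At every vertex $z^i$ of $C$ exactly one outgoing and one incoming edge carry $\pm 1$, so $d\phi^{r^{[C]}}\equiv 0$. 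For the coordinate-axis sums, each displacement $w^j$ contributes $\pm 1$ to the $i$-th sum exactly when $w^j=\pm e^{(i)}/N$, giving
$$
\sum_{x\in V_N}\phi^{r^{[C]}}\bigl(x,x+e^{(i)}/N\bigr) \;=\; N\,\Bigl(\sum_{j=1}^n w^j\Bigr)\cdot e^{(i)},
$$
which vanishes for each $i$ by the homotopical triviality of $C$, i.e.\ by identity~\eqref{rivestimento}.

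There is no serious obstacle beyond the invocation of the characterization of $d\Lambda^2$, whose justification via the discrete Hodge decomposition is deferred to Section~\ref{qui} and is independent of this lemma. Conceptually the argument is a clean match between the two algebraic conditions defining $d\Lambda^2$ (divergence-free plus vanishing total coordinate-axis circulation) and the two geometric features of a homotopically trivial graph cycle (one step in and one step out at every visited vertex; zero net displacement around the loop).
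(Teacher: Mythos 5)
Your proof is correct and follows essentially the same route as the paper: both reduce by linearity to a single homotopically trivial cycle and verify exactly the two conditions that identify $d\Lambda^2$ (zero divergence, coming from one incoming and one outgoing edge at each visited vertex, i.e.\ the balancing condition; and vanishing coordinate-axis sums, coming from \eqref{rivestimento}), the paper phrasing these as orthogonality to $\delta\Lambda^0$ and to $\Lambda^1_H$ and then invoking the discrete Hodge decomposition of Theorem \ref{DHD}. The only slip is your claim that distinct vertices rule out an edge being traversed in both orientations: the elementary two-cycles $C_{\{x,y\}}=(x,y,x)$ do traverse $\{x,y\}$ in both directions, so the $\pm 1$ formula for $\phi^{r^{[C]}}$ fails there, but then $\phi^{r^{[C]}}=0\in d\Lambda^2$ and the conclusion is unaffected.
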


We have also the following Lemma where the metrics in $W$ is the
Euclidean one.
\begin{Le}\label{closed}
The sets $R^e$ and $R^*$ are closed sets in $W$.
\end{Le}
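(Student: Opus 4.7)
The plan is to reduce the claim to the well-known fact that a finitely generated convex cone in a finite-dimensional space is closed.

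First I would observe that $V_N$ is finite, so the space $W$ (of weights $r : V_N \times V_N \to \mathbb{R}^+$) embeds as a closed subset of a finite-dimensional Euclidean space. Moreover, any cycle, being by definition a sequence of pairwise distinct vertices, uses at most $|V_N|$ vertices; up to the equivalence $\sim$ of Section~\ref{lucilla}, the total number of equivalence classes in $\mathcal{C}$ is therefore finite. In particular the subsets $\mathcal{C}^* \subseteq \mathcal{C}$ (homotopically trivial classes) and $\mathcal{C}^e \subseteq \mathcal{C}^*$ (elementary classes) are finite.

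Next I would rewrite the definitions of $R^*$ and $R^e$ in purely conic form. By \eqref{dec*} and \eqref{decel},
\begin{equation*}
R^* \;=\; \Bigl\{\, \sum_{[C]\in\mathcal{C}^*}\rho([C])\,r^{[C]} \;:\; \rho([C])\geq 0 \,\Bigr\}, \qquad R^e \;=\; \Bigl\{\, \sum_{[C]\in\mathcal{C}^e}\rho([C])\,r^{[C]} \;:\; \rho([C])\geq 0 \,\Bigr\},
\end{equation*}
so both sets are conic hulls of finite collections of vectors $\{r^{[C]}\}$ inside the finite-dimensional ambient space.

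The remaining step is the classical fact that the conic hull of a finite set $\{v_1,\dots,v_k\}\subseteq\mathbb{R}^N$ is closed. I would prove this in the standard way: by a Carathéodory-type argument, any element of the conic hull is a nonnegative linear combination of a linearly independent sub-family $\{v_{i_1},\dots,v_{i_m}\}$, of which there are only finitely many. The cone generated by each such sub-family is the image of the closed orthant $\mathbb{R}^m_{\geq 0}$ under the injective linear map $(\lambda_1,\dots,\lambda_m)\mapsto \sum_j\lambda_j v_{i_j}$; an injective linear map from a finite-dimensional space has closed image when restricted to a closed set (it is a homeomorphism onto its image, since the inverse on the image is the restriction of a linear map). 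Hence each such simplicial subcone is closed, and the full conic hull is a finite union of them, therefore closed.

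Applying this principle to the generating sets $\{r^{[C]}\}_{[C]\in\mathcal{C}^*}$ and $\{r^{[C]}\}_{[C]\in\mathcal{C}^e}$ respectively yields that $R^*$ and $R^e$ are closed in $W$. There is no serious obstacle here; the only point worth emphasizing is the finiteness of the index sets $\mathcal{C}^*$ and $\mathcal{C}^e$, which is what makes the conic-hull argument available (and which would \emph{not} hold on an infinite graph, where closedness becomes genuinely delicate).
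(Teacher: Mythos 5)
Your proof is correct, and it takes a different route from the one in the paper. The paper argues sequentially: given $r_n\in R^*$ with $r_n\to r$, it bounds each coefficient $\rho_n([C])$ by $r_n(x,y)$ for an edge $(x,y)\in C$, extracts (finitely many times) convergent subsequences of the coefficients, and passes to the limit in the finite sum \eqref{dec*}; the same words apply to $R^e$. You instead observe that, because the discrete torus is a finite graph, $\mathcal C^*$ and $\mathcal C^e$ are finite, so $R^*$ and $R^e$ are finitely generated convex cones in a finite-dimensional space, and you invoke (with its standard proof: conic Carath\'eodory reduction to linearly independent subfamilies, closedness of each simplicial subcone as the image of a closed orthant under an injective linear map, finite union) the classical fact that such cones are closed. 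Both arguments hinge on the same finiteness of the generating family, and both are elementary; what yours buys in addition is structural information, namely that $R^*$ and $R^e$ are polyhedral cones (finitely generated, hence also finitely constrained), which is consonant with the explicit polyhedral description of $R^e\cap R(\phi)$ obtained later in Theorem \ref{ugu}, while the paper's compactness-of-coefficients argument is the one that generalizes more directly when one tries to relax finiteness (where, as you note, the conic-hull statement itself breaks down). The only points worth stating explicitly in your write-up are that the image of the injective linear map is a linear subspace, hence closed in the ambient space (so ``closed in the image'' upgrades to ``closed in $\mathbb R^{E_N}$''), and that a closed subset of $\mathbb R^{E_N}$ contained in $W$ is closed in $W$ for the Euclidean metric used in the Lemma; both are immediate.
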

As we will see, an immediate consequence of the above Lemma is the following Corollary. It says that $\phi^r\in
d\Lambda^2$ is not a sufficient condition to have $r\in R^*$ (and
consequently also $r\in R^e$). We add a condition of positivity
since in this way we get a stronger statement.
\begin{Cor}
\label{corclosed} There exists a weight $r\in W$ such that
$\phi^r\in d\Lambda^2$, $r(x,y)>0$ for any $(x,y)\in E_N$ but
nevertheless $r\not \in R^*$.
\end{Cor}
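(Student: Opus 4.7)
The plan is to upgrade the strict containment $R^*\subsetneq A$ (where $A:=\{r\in W:\phi^r\in d\Lambda^2\}$) to a strictly positive witness by invoking the closedness from Lemma \ref{closed}. Concretely, it suffices to exhibit some $r_0\in W$ with $\phi^{r_0}\in d\Lambda^2$ and $r_0\notin R^*$, and then to perturb $r_0$ by a small multiple of a symmetric, strictly positive weight so as to remain inside $A$, acquire strict positivity on every edge, and stay outside the closed set $R^*$.

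For the positivity upgrade, let $s\in W$ be the constant symmetric weight $s(x,y):=1$ for every $(x,y)\in E_N$. Since $s\in R(0)$ we have $\phi^s\equiv 0$, so for every $\epsilon>0$ the perturbation $r_\epsilon:=r_0+\epsilon s$ satisfies
\[
\phi^{r_\epsilon}=\phi^{r_0}+\epsilon\phi^s=\phi^{r_0}\in d\Lambda^2, \qquad r_\epsilon(x,y)=r_0(x,y)+\epsilon\geq\epsilon>0.
\]
Lemma \ref{closed} tells us that $R^*$ is closed, so there exists $\delta>0$ with the open ball around $r_0$ of radius $\delta$ disjoint from $R^*$; choosing $0<\epsilon<\delta/\|s\|$ forces $r_\epsilon\notin R^*$ while keeping $r_\epsilon$ strictly positive and $\phi^{r_\epsilon}\in d\Lambda^2$, which is exactly the claim of the corollary.

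The hard part is producing the initial $r_0$. The linear span of the generators $\{r^{[C]}\}_{[C]\in\mathcal C^*}$ already coincides with $A$: edge cycles span $R(0)$ and, for each $f\in F_N'$, the circulation $r^{[C_f]}$ projects to the indicator field $d\mathbf 1_f$, so signed combinations of 2-cell cycles exhaust $d\Lambda^2$. The set $R^*$ is only the non-negative cone generated by these $r^{[C]}$, and the non-negativity requirement is strictly more restrictive than the linear one. An explicit $r_0$ can be built on a small torus (for instance $d=2$, $N=3$) by choosing $\phi_0\in d\Lambda^2$ with sharply unequal circulations around adjacent 2-cells and then taking $r_0$ to be the minimal representative $r^{\phi_0}\in R(\phi_0)$, so that every putative non-negative rewriting $r_0=\sum_{[C]}\rho([C])r^{[C]}$ over $\mathcal C^*$ would force a coefficient to be negative on some edge — a contradiction that can be certified via Farkas' lemma (or by direct inspection of the rate graph). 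Once such an $r_0$ is certified, the closedness step above automatically yields the strictly positive counter-example.
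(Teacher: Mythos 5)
Your reduction step is exactly the paper's: once some $r_0$ with $\phi^{r_0}\in d\Lambda^2$ and $r_0\notin R^*$ is in hand, you add a small positive symmetric weight, which leaves $\phi^{r}$ unchanged, makes every edge weight strictly positive, and stays outside $R^*$ by Lemma \ref{closed}. That part is fine and is precisely the argument the paper gives just before the Corollary. The gap is in producing $r_0$. This is the entire nontrivial content of the statement, and your proposal only asserts that such an $r_0$ ``can be certified via Farkas' lemma (or by direct inspection)'' without exhibiting one or carrying out the certification. The preceding remark that $R^*$ is merely the nonnegative cone on the generators while their linear span is all of the admissible set does not establish strictness either: a cone can perfectly well coincide with the intersection of its linear span with the relevant positivity constraints (the nonnegative orthant is the standard example), so ``the non-negativity requirement is strictly more restrictive'' is not a proof that $R^*\subsetneq\{r:\phi^r\in d\Lambda^2\}$.

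There is also a substantive worry about the recipe you sketch. Choosing $\phi_0$ with ``sharply unequal circulations around adjacent 2-cells'' is the natural way to violate the interval-intersection condition of Theorem \ref{ugu}, i.e.\ to get $r^{\phi_0}\notin R^e$; but the Corollary concerns $R^*$, and $R^e\subseteq R^*$, so failing the elementary decomposition does not rule out a decomposition into general homotopically trivial cycles. You would still have to argue against every cycle in $\mathcal C^*$, a much larger family. The paper sidesteps this entirely with the example of Figure \ref{fig2}: take $r=r^\phi$ supported on two columns of vertical edges of the two-dimensional torus, one column oriented upward and one downward. One checks directly that $\phi^r$ is divergence free and has zero total flux in both coordinate directions, hence lies in $d\Lambda^2$; and the support graph consists of two disjoint loops winding around the torus, so it contains no homotopically trivial cycle at all, which forces $r\notin R^*$ in one line. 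If you replace your sketched construction by this (or any equally explicit) example with its verification, your argument becomes a complete proof along the paper's lines.
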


\subsubsection{One dimensional torus}
In the one dimensional case there are no two dimensional faces and
$$
\mathcal C^*=\mathcal C^e=\left\{[C_{(x,x+1/N)}]\right\}_{x\in V_N}\,,
$$
so that $|\mathcal C^*|=N$. Moreover we have $|\mathcal C|=N+2$. To
the previous equivalence classes we need to add the two equivalence
classes having as representants the cycles
$$C_+:=\left(0,1/N,2/N,\dots,(N-1)/N,0\right)$$
and
$$C_-:=\left(0,(N-1)/N,(N-2)/N,\dots,1/N,0\right)\,.$$
The following Theorem says that the validity of decompositions of
the type \eqref{dec*} and \eqref{decel} is equivalent to a geometric
property of the vector field $\phi^r$. More precisely \eqref{dec*}
and \eqref{decel} hold if and only if $\phi^r$ is zero.
\begin{Th}
It holds a decomposition of the type \eqref{dec*} if and only if
$\phi^r=0$. Moreover this happens if and only if the rates $r$ are
reversible with respect to the uniform measure. \label{teodoro}
\end{Th}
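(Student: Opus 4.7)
The plan is to exploit the very simple structure of $\mathcal{C}^*$ in the one dimensional case: it consists of $N$ elementary ``back and forth'' cycles of the form $C_{(x,x+1/N)} = (x, x+1/N, x)$. The strategy is to observe that each such cycle produces a symmetric weight, so any non-negative combination of them is automatically symmetric, while conversely any symmetric weight can be trivially expanded in this basis.

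First, I would compute $\phi^{r^{[C_{(x,x+1/N)}]}}$ directly from the definition \eqref{pesociclo}: the cycle $(x, x+1/N, x)$ traverses both orientations of the edge $\{x, x+1/N\}$ once, so $r^{[C_{(x,x+1/N)}]}(x,x+1/N) = r^{[C_{(x,x+1/N)}]}(x+1/N,x) = 1$, hence $\phi^{r^{[C_{(x,x+1/N)}]}} \equiv 0$. From this, the ``only if'' direction follows immediately: if $r = \sum_{[C]\in\mathcal{C}^*}\rho([C])r^{[C]}$, then by linearity of $r \mapsto \phi^r$ we get
$$\phi^r = \sum_{[C]\in\mathcal{C}^*}\rho([C])\phi^{r^{[C]}} = 0.$$

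For the ``if'' direction, assume $\phi^r = 0$, i.e.\ $r(x,x+1/N) = r(x+1/N,x)$ for every $x \in V_N$. I would then define
$$\rho([C_{(x,x+1/N)}]) := r(x,x+1/N) = r(x+1/N,x) \ \geq 0$$
and verify that $r = \sum_{x \in V_N}\rho([C_{(x,x+1/N)}]) r^{[C_{(x,x+1/N)}]}$. Since each oriented edge $(y,y+1/N)$ or $(y+1/N,y)$ of the 1D torus appears in exactly one elementary cycle, evaluating the right hand side on any edge recovers $r$ on that edge, which settles \eqref{dec*}.

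For the last sentence, I would simply remark that $\phi^r = 0$ is equivalent to $r(x,y) = r(y,x)$ for all neighboring pairs, i.e.\ to the symmetry of $r$, which on the 1D torus (where the uniform measure $\pi \equiv 1/N$ is constant) is equivalent to the detailed balance condition $\pi(x)r(x,y) = \pi(y)r(y,x)$, i.e.\ reversibility with respect to the uniform measure. There is no genuine obstacle here; the proof is essentially a direct verification, and the only thing to take care of is to check that the proposed coefficients $\rho([C_{(x,x+1/N)}])$ are well defined (this is exactly where the hypothesis $\phi^r = 0$ is used to guarantee the two candidate values coincide and are non-negative).
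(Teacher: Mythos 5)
Your proof is correct and follows essentially the same route as the paper: both directions rest on the observation that the elementary cycles $C_{\{x,x+1/N\}}$ (which exhaust $\mathcal C^*$ in one dimension) have symmetric weights, so $\phi^r$ vanishes for any such decomposition, while conversely setting $\rho([C_{\{x,x+1/N\}}])=r(x,x+1/N)=r(x+1/N,x)$ reconstructs $r$, and reversibility is just detailed balance with the constant measure. No gap.
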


Let
\begin{equation}
m:=\min_{(x,y)\in E_N}r(x,y)\,.
\nonumber
\end{equation}
The following Theorem says that also the validity of a decomposition
like \eqref{decgr} is still equivalent to a geometric property of
the vector field $\phi^r$. More precisely \eqref{decgr} holds if and
only if $\phi^r$ is divergence free. Moreover the model is so simple
that we can characterize completely the associated measures $\rho$.

\begin{Th}
\label{teodoro2} A decomposition like \eqref{decgr} holds if and
only if the discrete vector field $\phi^r$ is constant or
equivalently has zero divergence $d\phi^r=0$. Moreover if $\phi^r=c$
then all the measures on $\mathcal C$ for which \eqref{decgr} holds
are parameterized by the real parameter $a\in [0,m]$ as
\begin{equation}
\left\{
\begin{array}{l}
\rho([C_{\left\{x,y\right\}}])=\min\left\{r(x,y),r(y,x)\right\}-a\,,\\
\rho([C_+])=[c]_++a\,,\\
\rho([C_-])=[-c]_++a\,.
\end{array}
\right.\label{teonuovo}
\end{equation}
\end{Th}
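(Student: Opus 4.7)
The plan is to leverage Theorem~\ref{teofin}, which identifies the weighted graphs admitting a decomposition like~\eqref{decgr} with the balanced ones, and then to read off the free parameters from an explicit linear system.

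For the first assertion, I would begin by writing out what balancing means on the one-dimensional torus: at a vertex $x$, whose only neighbours are $x\pm 1/N$, condition \eqref{balancing} becomes
\[
r(x-1/N,x)+r(x+1/N,x)=r(x,x-1/N)+r(x,x+1/N),
\]
which rearranges to $\phi^r(x-1/N,x)=\phi^r(x,x+1/N)$. Since the torus is connected this forces $\phi^r$ to take a single value $c$ on every positively oriented edge. The same rearrangement is exactly the condition $d\phi^r(x)=0$ coming from \eqref{ddber}, so the two characterisations coincide, and by Theorem~\ref{teofin} they are both equivalent to the existence of a decomposition \eqref{decgr}.

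For the parametrisation I would use the observation that in $\mathcal C$ the only cycles sensitive to the orientation of an edge are $C_+$ and $C_-$: every two-step cycle $C_{\{x,x+1/N\}}$ charges both $(x,x+1/N)$ and $(x+1/N,x)$ with the same weight $1$, while $r^{[C_+]}$ (resp.\ $r^{[C_-]}$) charges each forward (resp.\ backward) directed edge with $1$. Writing $p:=\rho([C_+])$ and $q:=\rho([C_-])$, identity \eqref{decgr} therefore forces, for every $x$,
\[
r(x,x+1/N)=\rho([C_{\{x,x+1/N\}}])+p, \qquad r(x+1/N,x)=\rho([C_{\{x,x+1/N\}}])+q.
\]
Subtraction gives $p-q=c$, consistent with the first part, and setting $a:=\min(p,q)$ one recovers $p=[c]_++a$, $q=[-c]_++a$, and $\rho([C_{\{x,x+1/N\}}])=\min\{r(x,x+1/N),r(x+1/N,x)\}-a$, which is exactly \eqref{teonuovo}.

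The range of $a$ then follows from positivity: $\rho([C_\pm])\geq 0$ gives $a\geq 0$, and $\rho([C_{\{x,y\}}])\geq 0$ on every edge gives $a\leq m$; conversely a direct check shows that for any $a\in[0,m]$ the prescription \eqref{teonuovo} defines a positive measure on $\mathcal C$ whose superposition reconstructs $r$. The whole argument is essentially bookkeeping; the only mild subtlety is the case split on the sign of $c$ used to convert $r(x+1/N,x)-q$ into $\min\{r(x,y),r(y,x)\}-a$, which reduces to the elementary identity $\min\{r(x,y),r(y,x)\}=r(x+1/N,x)-[-c]_+$.
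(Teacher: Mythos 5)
Your argument is correct and essentially the same as the paper's: the first part reduces to Theorem \ref{teofin} together with the observation that on the one-dimensional torus balancing, zero divergence and constancy of $\phi^r$ coincide, and the parametrisation comes from solving the same two-term linear system edge by edge. The only cosmetic difference is that you set $a:=\min\{\rho([C_+]),\rho([C_-])\}$ directly, while the paper phrases the same bookkeeping through the symmetric part $s$ of \eqref{nasce}, using $s(x,y)=\min\{r(x,y),r(y,x)\}$.
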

By a constant discrete vector field $\phi=c$ we mean $\phi(x,y)=c$
for any $(x,y)\in E'_N$.
\subsubsection{Two dimensional torus}
Every $(x,y)\in E_N'$ belongs to only two elements of $F_N$ and
moreover one is clockwise oriented and the other one is
anticlockwise oriented. We call $f_+$ the element of $F_N'$ such
that $(x,y)\in f_+$ and $f_-$ the element of $F_N'$ such that
$(y,x)\in f_-$. Let $\phi \in d\Lambda^2$ and consider any $\psi\in
\Lambda^2$ such that $d\psi=\phi$. As will be explained more in
detail in section \ref{qui}, any other $\psi'$ such that
$d\psi'=\phi$ differs from $\psi$ by an additive constant, this
means that there exists a real constant $c$ such that
$$
\psi'(f)=\psi(f)+c\,,\qquad \forall f\in F_N'\,.
$$
The main result of this subsection, Theorem \ref{ugu} below, will
not depend on the specific choice of $c$. To every $(x,y)\in
E_N'$ we associate the closed interval $I(x,y)$ of $\mathbb R$
defined as
$$
I(x,y)=[i_1(x,y),i_2(x,y)]:=\left[\min\left\{\psi(f_-),\psi(f_+)\right\},
\max\left\{\psi(f_-),\psi(f_+)\right\}\right]\,.
$$

Given $I_1,\dots ,I_n$ a collection of closed intervals of $\mathbb
R$ we call $\mathcal P(I_1,\dots,I_n)\subseteq \mathbb R^n$ the
closed unbounded convex polyhedron defined as follows. An element
$s=(s_1,\dots,s_n)\in \mathbb R^n$ belongs to $\mathcal
P(I_1,\dots,I_n)$ if and only if the inequalities
\begin{equation}
s_i+s_j\geq d(I_i,I_j)\,, \ \ \ \ i,j=1,\dots,n\,, \label{vincpoli}
\end{equation}
are satisfied. Note that in \eqref{vincpoli} we are considering also
the cases $i=j$ that imply $s_i\geq 0$. See Figure \ref{tuttotace}
for a simple example in dimension $n=2$. Two important properties of
this polyhedron are the following. The first one is that if $s \in
\mathcal P(I_1,\dots,I_n)$ and $s'\geq s$ then $s'\in \mathcal
P(I_1,\dots,I_n)$. The second one is that $0\in \mathcal
P(I_1,\dots,I_n)$ if and only if $d(I_i,I_j)=0$ for any $i,j$. By
the Helly's Theorem recalled in section \ref{qui} this is equivalent
to $\cap_iI_i\neq \emptyset$. In this case clearly $\mathcal
P(I_1,\dots,I_n)=(\mathbb R^+)^n$.
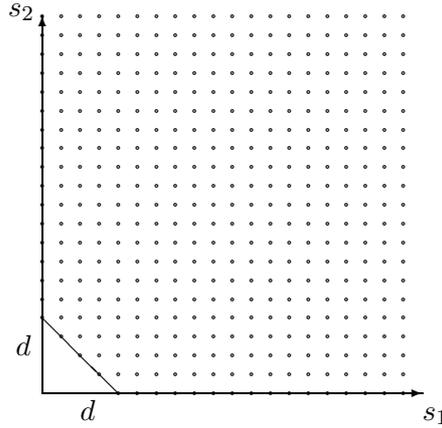
\begin{figure}
\setlength{\unitlength}{5cm}
\begin{picture}(1,1)
\put(1,-0.07){$s_1$} \put(-0.09,1){$s_2$} \put(0.1,-0.07){$d$}
\put(-0.07,0.1){$d$} \put(0,0){\vector(0,1){1}}
\put(0,0){\vector(1,0){1}} \put(0,0.2){\line(1,-1){0.2}}
\multiput(0,0.2)(0.05,0){20}%
{\circle{0.001}}
\multiput(0,0.25)(0.05,0){20}%
{\circle{0.001}}
\multiput(0,0.3)(0.05,0){20}%
{\circle{0.001}}
\multiput(0,0.35)(0.05,0){20}%
{\circle{0.001}}
\multiput(0,0.4)(0.05,0){20}%
{\circle{0.001}}
\multiput(0,0.45)(0.05,0){20}%
{\circle{0.001}}
\multiput(0,0.5)(0.05,0){20}%
{\circle{0.001}}
\multiput(0,0.55)(0.05,0){20}%
{\circle{0.001}}
\multiput(0,0.6)(0.05,0){20}%
{\circle{0.001}}
\multiput(0,0.65)(0.05,0){20}%
{\circle{0.001}}
\multiput(0,0.7)(0.05,0){20}%
{\circle{0.001}}
\multiput(0,0.75)(0.05,0){20}%
{\circle{0.001}}
\multiput(0,0.8)(0.05,0){20}%
{\circle{0.001}}
\multiput(0,0.85)(0.05,0){20}%
{\circle{0.001}}
\multiput(0,0.9)(0.05,0){20}%
{\circle{0.001}}
\multiput(0,0.95)(0.05,0){20}%
{\circle{0.001}}
\multiput(0,1)(0.05,0){20}%
{\circle{0.001}}
\multiput(0.05,0.15)(0.05,0){19}%
{\circle{0.001}}
\multiput(0.1,0.1)(0.05,0){18}%
{\circle{0.001}}
\multiput(0.15,0.05)(0.05,0){17}%
{\circle{0.001}}
\multiput(0.2,0)(0.05,0){16}%
{\circle{0.001}}

\end{picture}
\caption{The dashed region represents the polyhedron $\mathcal
P\left(I_1,I_2\right)\subseteq \left(\mathbb R^+\right)^2$ when
$d(I_1,I_2)=d>0$. In this case $0\not\in \mathcal
P\left(I_1,I_2\right)$.} \label{tuttotace}
\end{figure}

The main result of this subsection is the following. It says that,
differently from the one dimensional case, the validity of a
decomposition like \eqref{decel} is not simply equivalent to require a
geometric property of the vector field $\phi^r$, but involves
instead also the symmetric part $s$.

\begin{Th}
\label{ugu} Let $r$ such that $\phi^r=\phi\in d\Lambda^2$. We have
$r\in R^e$ if and only if
\begin{equation}
s\in \mathcal P\left(\left\{I(x,y)\right\}_{(x,y)\in E'_N}\right)\,.
\label{isolafa}
\end{equation}
\end{Th}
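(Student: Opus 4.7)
The plan is to translate the existence of an elementary cyclic decomposition \eqref{decel} into a linear feasibility problem, reduce that feasibility to the nonempty intersection of a family of closed intervals in $\mathbb{R}$, and conclude by applying Helly's Theorem in dimension one (recalled in Section \ref{qui}).

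First I would parameterize an arbitrary candidate decomposition by nonnegative coefficients $\eta_e = \rho([C_{\{x,y\}}])$ for each unoriented edge $e = \{x,y\}$ and $\tau_f = \rho([C_f])$ for each oriented face $f \in F_N$. Since every directed edge $(x,y) \in E'_N$ is contained in exactly two oriented faces $a, b \in F_N$, the identity $r = \sum_{[C]} \rho([C]) r^{[C]}$ becomes
\begin{equation*}
r(x,y) = \eta_{\{x,y\}} + \tau_a + \tau_b, \qquad r(y,x) = \eta_{\{x,y\}} + \tau_{a^c} + \tau_{b^c}.
\end{equation*}
I would then switch to symmetric and antisymmetric variables $\mu(f) := \tau_f + \tau_{f^c}$, which descends to a function on $F_N'$, and $\psi(f) := \tau_f - \tau_{f^c} \in \Lambda^2$; the constraint $\tau \geq 0$ becomes $\mu \geq |\psi|$, the difference of the two equations yields $d\psi = \phi$, and their half-sum yields $\eta_{\{x,y\}} = \tfrac12(r(x,y)+r(y,x)) - \tfrac12(\mu(f_+)+\mu(f_-))$. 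Since $\phi \in d\Lambda^2$ by hypothesis, an antiderivative $\psi$ exists and, as recalled before the theorem, is unique up to adding a constant $c$ to $\tilde\psi := \psi|_{F_N'}$.

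The best choice of $\mu$ is $\mu(f) := |\tilde\psi(f)+c|$, which is pointwise minimal under $\mu \geq |\psi|$ and therefore makes the positivity of $\eta_e$ the least restrictive. Combining $r(x,y)+r(y,x) = 2s(x,y)+|\phi(x,y)|$, the identity $|\phi(x,y)| = |\tilde\psi(f_+) - \tilde\psi(f_-)| = |I(x,y)|$, and the elementary real identity
\begin{equation*}
|\alpha+c|+|\beta+c| = |\alpha-\beta| + 2\,d\bigl(-c,\,[\min(\alpha,\beta),\max(\alpha,\beta)]\bigr) \qquad (\alpha,\beta,c \in \mathbb{R}),
\end{equation*}
the constraint $\eta_{\{x,y\}} \geq 0$ collapses to $d(-c,I(x,y)) \leq s(x,y)$. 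Thus $r \in R^e$ if and only if there exists $c \in \mathbb{R}$ with $-c$ in every compact interval
\begin{equation*}
J(x,y) := \bigl[\,i_1(x,y)-s(x,y),\;i_2(x,y)+s(x,y)\,\bigr], \qquad (x,y) \in E'_N.
\end{equation*}
By Helly's Theorem in $\mathbb{R}$, this nonempty intersection is equivalent to the pairwise nonempty intersection of the $J(x,y)$, and a direct computation gives $J(x,y) \cap J(x',y') \neq \emptyset$ iff $s(x,y)+s(x',y') \geq d(I(x,y),I(x',y'))$, which is exactly the defining inequality \eqref{vincpoli} of $\mathcal{P}\bigl(\{I(x,y)\}_{(x,y)\in E'_N}\bigr)$. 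The conclusion is manifestly independent of the particular $\psi$ chosen, since shifting $\tilde\psi$ by a constant shifts every $I(x,y)$ uniformly and is absorbed by reparametrizing $c$.

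The main obstacle I anticipate is the careful bookkeeping of orientations on the two-dimensional torus needed to justify $d\psi(x,y) = \tilde\psi(f_+) - \tilde\psi(f_-)$ with the signs consistent with the definition of $f_\pm$ in the theorem; this is where the property that every pair of elements of $F_N'$ is oriented in agreement (guaranteed by the orientability of $\mathbb{T}^2$) enters crucially. Once this is in place, the feasibility reduction and the one-dimensional Helly step are essentially routine.
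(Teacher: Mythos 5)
Your proposal is correct and follows essentially the same route as the paper: both reduce the existence of an elementary decomposition to the existence of a single constant $c$ with $s(x,y)\geq d(-c,I(x,y))$ for every $(x,y)\in E_N'$ (the paper via the positive-part identity for $Z(a,b)$ and the general solution $\rho([C_f])=[(\psi+c)(f)]_++m(f)$, you via the symmetric/antisymmetric splitting $\mu,\psi$ of the face weights and the minimal choice $\mu=|\tilde\psi+c|$), and then both conclude by one-dimensional Helly applied to the intervals $[i_1-s,i_2+s]$. The differences are purely in bookkeeping, so no further comparison is needed.
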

In \eqref{isolafa} $s$ is the symmetric part of $r$ (see
\eqref{nasce}) and the intervals are constructed using any $\psi\in
\Lambda^2$ such that $d\psi=\phi$. The polyhedron obtained is
independent on the specific choice.

\subsubsection{Other topologies}

We can generalize Theorem \ref{ugu} to surfaces different from the
two dimensional torus and to cellular decompositions different from
the cubic one. More precisely consider an unoriented graph embedded
on a compact surface without boundary. Vertices are associated to
points of the surface, edges are associated to continuous self
avoiding curves on the surface connecting vertices. Two different
curves may intersect only on vertices. Cutting the surface along
edges we obtain a finite number of two dimensional cells
homeomorphic to two dimensional balls. Every edge belong to only two
different two dimensional cells. From this we construct an oriented
graph $(V,E)$ whose vertices coincide with the vertices of the
unoriented graph and whose oriented edges are obtained splitting any
unoriented edge $\left\{x,y\right\}$ into $(x,y)$ and $(y,x)$.
Elementary cycles can be defined also in this case. They are
naturally associated to one dimensional cells and oriented two
dimensional cells and are defined like in the previous subsection.
Also the boundary and co-boundary operators are defined in a similar
way. We avoid formal definitions, see \cite{DKT}, \cite{daf} and
\cite{Me} for more details. We need to distinguish two cases, when
the surface is orientable or not. If the surface is orientable and
$\phi\in d\Lambda^2$ then $\psi\in \Lambda^2$ such that $\phi=d\psi$
is defined up to an additive constant. If the surface is non
orientable and $\phi\in d\Lambda^2$ then $\psi\in \Lambda^2$ such
that $\phi=d\psi$ is uniquely determined.

In the orientable case we can fix $F_N'\subseteq F_N$ choosing an
orientation for any two dimensional cell in such a way that any pair
of adjacent elements of $F_N'$ is oriented in agreement. We fix also
$E_N'\subseteq E_N$ choosing arbitrarily one among the two possible
orientations for any edge. For any $(x,y)\in E_N'$ there exists only
one element $f_+\in F_N'$ such that $(x,y)\in f_+$ and one element
$f_-\in F_N'$ such that $(y,x)\in f_-$. The corresponding interval
$I(x,y)$ is defined as in the case of the two dimensional torus. We
then have the following Theorem.

\begin{Th}
\label{bart} Consider a weighted oriented graph constructed starting
from a finite cellular subdivision of a compact orientable surface
without boundary and let $r\in W$ such that $\phi^r=\phi\in
d\Lambda^2$. Then we have $r\in R^e$ if and only if
$$
s\in \mathcal P\left(\left\{I(x,y)\right\}_{(x,y)\in E'_N}\right)\,.
$$
\end{Th}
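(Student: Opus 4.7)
The plan is to reduce Theorem \ref{bart} to the argument already carried out for Theorem \ref{ugu}, noticing that the latter proof was entirely local in nature. Three structural ingredients are needed and they all persist on any compact orientable surface without boundary: (i) every edge of $E'_N$ is incident to exactly two $2$-dimensional cells; (ii) orientability allows one to pick $F'_N \subseteq F_N$ so that any two adjacent oriented cells in $F'_N$ are oriented in agreement, which gives, for each $(x,y) \in E'_N$, unambiguous $f_+, f_- \in F'_N$ with $(x,y) \in f_+$ and $(y,x) \in f_-$; (iii) for $\phi \in d\Lambda^2$, the $2$-chain $\psi$ with $d\psi = \phi$ is unique up to an additive constant. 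Once these are in place the computation used on the two-dimensional torus goes through verbatim.

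For the \emph{necessity} direction I would start from $r = \sum_{(x,y) \in E'_N} \alpha_{\{x,y\}} r^{[C_{\{x,y\}}]} + \sum_{f \in F_N} \beta_f r^{[C_f]}$ with $\alpha, \beta \geq 0$. Using (i) and (ii),
\[
r(x,y) = \alpha_{\{x,y\}} + \beta_{f_+} + \beta_{f_-^c}, \qquad r(y,x) = \alpha_{\{x,y\}} + \beta_{f_-} + \beta_{f_+^c}.
\]
Setting $\psi(f) := \beta_f - \beta_{f^c}$ and $t(f) := \beta_f + \beta_{f^c} \geq |\psi(f)|$, one gets $\psi \in \Lambda^2$, $\phi(x,y) = \psi(f_+) - \psi(f_-) = d\psi(x,y)$, so $\psi$ is one of the distinguished representatives of the theorem. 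The identity $r(x,y) + r(y,x) = 2s(x,y) + |\phi(x,y)| = 2\alpha_{\{x,y\}} + t(f_+) + t(f_-)$ together with $\alpha_{\{x,y\}} \geq 0$ gives
\[
s(x,y) \geq \tfrac{1}{2}\bigl(t(f_+) + t(f_-) - |\psi(f_+) - \psi(f_-)|\bigr) \geq \tfrac{1}{2}\bigl(|\psi(f_+)| + |\psi(f_-)| - |\psi(f_+) - \psi(f_-)|\bigr) = d(0, I(x,y)),
\]
where the last equality comes from a case analysis on the signs of $\psi(f_\pm)$. The condition $s \in \mathcal{P}(\{I(x,y)\})$ then follows from the triangle inequality $d(I(x,y), I(x',y')) \leq d(0,I(x,y)) + d(0,I(x',y')) \leq s(x,y) + s(x',y')$.

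For the \emph{sufficiency} direction I would introduce the thickened intervals $J(x,y) := \{\xi \in \mathbb R : d(\xi, I(x,y)) \leq s(x,y)\}$. The hypothesis $s \in \mathcal{P}$ means exactly that every pair of these closed intervals of $\mathbb R$ intersects, so by the one-dimensional Helly's Theorem (Section \ref{qui}) there exists $c^* \in \bigcap_{(x,y) \in E'_N} J(x,y)$. Replacing $\psi$ by $\psi - c^*$, which is legal by (iii), one may assume $c^*=0$, i.e.\ $s(x,y) \geq d(0,I(x,y))$ for every edge. Then the coefficients $t(f) := |\psi(f)|$, $\beta_f := \tfrac{t(f)+\psi(f)}{2}$, $\beta_{f^c} := \tfrac{t(f)-\psi(f)}{2}$ and $\alpha_{\{x,y\}} := s(x,y) - d(0,I(x,y))$ are all non-negative, and reversing the earlier computation one checks that they give a decomposition of $r$ over $\mathcal{C}^e$.

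The main obstacle is not conceptual but concerns bookkeeping of signs, so that $d\psi = \phi$, $\beta_f, \beta_{f^c} \geq 0$ and $\alpha_{\{x,y\}} \geq 0$ are simultaneously ensured; this is precisely why the agreement of orientations in (ii) is needed. Orientability is also indispensable for (iii): on a non-orientable closed surface the kernel of $d$ on $\Lambda^2$ is trivial, the freedom to translate $\psi$ by a constant is lost, and the natural analogue of the statement would require the stronger condition $0 \in \bigcap_{(x,y)} J(x,y)$ rather than just non-emptiness of the intersection.
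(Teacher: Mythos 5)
Your proposal is correct and follows essentially the same route as the paper, whose own proof of Theorem \ref{bart} is exactly the reduction you describe: rerun the argument of Theorem \ref{ugu}, using that every edge lies in precisely two $2$-cells and that orientability permits a coherent choice of $F_N'$, so that $\psi$ with $d\psi=\phi$ is determined up to an additive constant. Your bookkeeping (the identity $[a]_++[-b]_+-[a-b]_+=d(0,[\min\{a,b\},\max\{a,b\}])$, the one-dimensional Helly argument on the intervals thickened by $s$, and the explicit nonnegative coefficients $\alpha_{\{x,y\}}=s(x,y)-d(0,I(x,y))$, $\beta_f=[\psi(f)]_+$) is just a slightly reorganized, more explicit version of the computation in the proof of Theorem \ref{ugu}, so no new idea is missing.
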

Given $J_1,\dots ,J_n$ a collection of closed subsets of $\mathbb R$
we call $\mathcal P'(J_1,\dots,J_n)\subseteq \mathbb R^n$ the closed
unbounded convex polyhedron defined as follows. An element
$s=(s_1,\dots,s_n)\in \mathbb R^n$ belongs to $\mathcal P'(J_1,\dots
,J_n)$ if and only if the inequalities
$$
s_i\geq d(0,J_i)\,, \qquad i=1,\dots,n\,,
$$
are satisfied. Note that if $J_1,\dots ,J_n$ are closed intervals
then the triangle inequality immediately implies $\mathcal
P'(J_1,\dots,J_n)\subseteq \mathcal P(J_1,\dots,J_n)$.

In the case of a non orientable surface we fix $F_N'\subseteq F_N$
choosing for any two dimensional cell arbitrarily one among the two
possible orientations. Note that in this case is not possible to
select the orientations in such a way that any pair of adjacent
elements of $F_N'$ is oriented in agreement. Likewise we fix
$E_N'\subseteq E_N$ choosing arbitrarily one among the two possible
orientations for any 1 dimensional cell.

For a non orientable surface, fixed $(x,y)\in E_N'$, two possible
situations are possible. The first case is when there exist one
element $f_+\in F_N'$ such that $(x,y)\in f_+$ and one $f_-\in F_N'$
such that $(y,x)\in F_N'$. When this happens we define
$J(x,y):=I(x,y)$ as for orientable surfaces. The second case is when
either do not exist elements of $F_N'$ to which $(x,y)$ belongs or
there are $f_1,f_2\in F_N'$ associated to adjacent two dimensional
cells such that $(x,y)\in f_i$, $i=1,2$. We then define
$$
J(x,y):=\Big(-\infty,
\min\left\{\psi(f_1),\psi(f_2)\right\}\Big]\cup\Big[\max
\left\{\psi(f_1),\psi(f_2)\right\},+\infty\Big)\,,
$$
where depending on the cases either $(y,x)\in f_i$ or $(x,y)\in
f_i$, $i=1,2$. We have the following Theorem.
\begin{Th}
\label{lisa} Consider a weighted oriented graph constructed starting
from a finite cellular subdivision of a compact non-orientable
surface without boundary and let $r\in W$ such that $\phi^r=\phi\in
d\Lambda^2$. Then we have $r\in R^e$ if and only if
$$
s\in \mathcal P'\left(\left\{J(x,y)\right\}_{(x,y)\in
E'_N}\right)\,.
$$
\end{Th}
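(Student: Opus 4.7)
The plan is to adapt the proof strategy for Theorem \ref{bart} (orientable case), carefully isolating the role played by orientability. The crucial structural fact is that on a non-orientable surface the 2-chain $\psi\in\Lambda^2$ with $d\psi=\phi$ is \emph{uniquely} determined, not defined only up to an additive constant as in the orientable setting. Indeed, shifting $\psi$ by $+c$ on $F_N'$ and $-c$ on its complement adds $c+(-c)=0$ to $d\psi$ at orientation-consistent edges but adds $\pm 2c$ at edges where the two adjacent elements of $F_N'$ carry the same orientation relative to the edge; the existence of such edges is precisely the obstruction to orientability, so any such shift forces $c=0$. This is why the pairwise constraints of $\mathcal{P}$ appearing in Theorems \ref{ugu} and \ref{bart} collapse to the individual constraints of $\mathcal{P}'$ here: there is no gauge freedom left to exploit.

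Once $\psi$ is fixed, I would parameterize any candidate elementary decomposition by non-negative weights $\alpha(f)$ on each oriented 2-cell $f\in F_N$ and $\beta(\{x,y\})$ on each unoriented 1-cell. The prescription $\phi^r=\phi$ forces $\alpha(f)-\alpha(f^c)=\psi(f)$, so one can write $\alpha(f)=[\psi(f)]_+ + t_F$ with $t_F\geq 0$ indexed by unoriented 2-cells $F$. Substituting in $r(x,y)=\sum_{f:(x,y)\in f}\alpha(f)+\beta(\{x,y\})$ yields
\begin{equation*}
\beta(\{x,y\})=r(x,y)-\sum_{f:(x,y)\in f}[\psi(f)]_+ -\sum_{F \ni \{x,y\}} t_F,
\end{equation*}
while consistency with the analogous identity for $r(y,x)$ is automatic from $d\psi=\phi$. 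The problem thus reduces to determining when one can choose $t_F\geq 0$ making every $\beta(\{x,y\})$ non-negative.

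Next I would perform case analysis on each $(x,y)\in E_N'$, matching the two cases in the definition of $J(x,y)$. When distinct $f_+,f_-\in F_N'$ satisfy $(x,y)\in f_+$ and $(y,x)\in f_-$, a direct algebraic manipulation using $r(x,y)=s(\{x,y\})+[\phi(x,y)]_+$ gives $r(x,y)-\sum_{f:(x,y)\in f}[\psi(f)]_+ = s(\{x,y\}) - d(0,J(x,y))$ with $J(x,y)=I(x,y)$, exactly as in the orientable analysis. In the remaining case where $(x,y)$ (or $(y,x)$) belongs to two adjacent cells $f_1,f_2\in F_N'$ with values $\psi_1,\psi_2$, the key identity
\begin{equation*}
[\psi_1]_+ + [\psi_2]_+ - [\psi_1+\psi_2]_+ = \begin{cases} 0 & \text{if }\psi_1,\psi_2\text{ share a sign,}\\ \min(|\psi_1|,|\psi_2|) & \text{otherwise,}\end{cases}
\end{equation*}
shows that the same quantity again evaluates to $s(\{x,y\}) - d(0,J(x,y))$ for the union set $J$ appearing in the theorem statement.

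Sufficiency follows immediately by choosing $t_F=0$ uniformly: each $\beta(\{x,y\})$ then equals $s(\{x,y\}) - d(0,J(x,y))$, non-negative by the hypothesis $s\in\mathcal{P}'$. Necessity follows because any valid decomposition yields $t_F\geq 0$ and $\beta(\{x,y\})\geq 0$, forcing $s(\{x,y\})\geq d(0,J(x,y))$ at each edge. The main technical obstacles are the rigorous verification that $\psi$ is uniquely determined on a non-orientable surface (which I would extract from the discrete Hodge decomposition discussed in section \ref{qui}, where the kernel of $d$ on $\Lambda^2$ becomes trivial) and the careful bookkeeping required by the union-type $J(x,y)$ in the orientation-failing case; once both are in hand, the linear program for $t_F$ is trivially feasible via the all-zero solution, explaining the absence of pairwise compatibility constraints in the non-orientable setting.
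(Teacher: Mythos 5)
Your proposal is correct and follows essentially the same route as the paper: it fixes the unique $\psi$ with $d\psi=\phi$ (the paper, like you, leans on the triviality of $\ker d|_{\Lambda^2}$ for a non-orientable surface rather than proving it in detail), parameterizes the face weights as $[\psi(f)]_+$ plus a non-negative slack (the paper's $m(f)$, your $t_F$), and reduces each edge constraint via the identity for $[a]_++[b]_+-[a+b]_+$ to $s(x,y)\geq d(0,J(x,y))$, with sufficiency from the all-zero slack choice. The only difference is that your write-up is more explicit than the paper's own proof, which is a short sketch deferring to the argument for Theorem \ref{ugu} with the additive constant $c$ frozen to zero.
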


\subsection{Applications}

Finally we discuss some applications. In particular we concentrate
on the elementary decomposition for the two dimensional torus. We
start from a smooth continuous vector field on $\mathbb T^2$ and
consider its Hodge decomposition. The natural continuous counterpart
of a discrete vector field on $d\Lambda^2$ is a vector field
obtained as the orthogonal gradient of a smooth potential function
$\psi$. We introduce then a natural discretization procedure. The
value of the discrete vector field on the edge $(x,y)\in E_N$ is the
value of the flux of the continuous vector field across the dual
edge of $(x,y)$ (see Remark \ref{duality} for the definition) with a
normal vector oriented in agreement with $(x,y)$. The result of the
discretization procedure is an element of $d\Lambda^2$. Then we
apply Theorem \ref{ugu} to this very general example  and obtain a
condition for the validity of the elementary decomposition in terms
of the variation of the potential function $\psi$.

Using the above framework we then construct a periodic random
environment on $\mathbb Z^d$. Theorem \ref{ugu} gives a condition on
the strength of the noise to be added in such a way that an
elementary decomposition holds almost surely. Using the results in
\cite{D} we can then deduce a quenched Central Limit Theorem.

All the applications are discussed in an informal way but the claims could be easily transformed into Theorems.

\section{Preliminary notions and results}
\label{qui} We start recalling some classic definitions of convex
analysis. See for example \cite{G} for more details.

\smallskip
Given $w^1,\dots,w^k$ distinct elements of $\mathbb Z^d$, with
$2\leq k\leq d+1$, we will say that they are in general position if
the vectors
\begin{equation}
\left\{ \begin{array}{l}
 d^1:=w^2-w^1\,\\
\vdots\\
 d^{k-1}:=w^{k}-w^1\,
\end{array}
\right.\label{defd}
\end{equation}
are linearly independent. By convention a single vector $w^1\in
\mathbb Z^d$ will be always considered to be in general position. It
is easy to see that this definition does not depend on the specific
order among the vectors.

\smallskip

Given $A\subseteq \mathbb R^d$ we denote by $A^0$ its relative
interior part. This is defined as
$$
A^0:=\left\{x\in A\,:\, \exists \epsilon >0 \ s.t.\
B_\epsilon(x)\cap aff(A) \subseteq A\right\}\,,
$$
where $B_\epsilon(x)$ is the Euclidean ball of radius $\epsilon$
centered at $x$.

Given $\underline w:=(w^1,\dots, w^k)\in (\mathbb Z^d)^k$ in general
position then $co(\{w^1,\dots,w^k\})$ is a $(k-1)-$dimensional
simplex and consequently for any $y\in co(\{w^1,\dots,w^k\})$ there
exists a unique  element $\mu:=(\mu_1,\dots,\mu_k)$ of
\begin{equation}
\mathbb S^{k}:=\left\{\mu=(\mu_1,\dots ,\mu_k)\ :\ \mu_i\geq 0\ ,\
\sum_{i=1}^k\mu_i=1\right\}\,,
\nonumber
\end{equation}
such that
\begin{equation}
\sum_{i=1}^k\mu_iw^i=y\,. \label{simplex}
\end{equation}
When $y\in \left(co(\{w^1,\dots,w^k\})\right)^0$ the coefficients
$\mu_i$ satisfy in addition the strict inequalities $0<\mu_i< 1$.

We recall that on $\mathbb R^d$, as shown in \cite{RW}, $co(A)$
coincides with the set of elements $x\in\mathbb R^d$ that can be
written as $x=\int_{\mathbb R^d}yd\mu(y)$, where $\mu$ is any
probability measure such that there exists a measurable set
$A'\subseteq A$ such that $\mu(A')=1$. In general this equivalence
is false.

Let us recall the following basic result of convex analysis (see for
example \cite{G})
\begin{Th}{\bf [Carath\'{e}odory]}
Let $A\subseteq \mathbb R^d$. Then for any $x\in co(A)$ there exist
$x^1,\dots ,x^k\in A$ in general position such that $x\in
co(\{x^1,\dots,x^k\})^0$. \label{kaka}
\end{Th}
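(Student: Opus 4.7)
The plan is to start from any representation $x=\sum_{i=1}^n \alpha_i x^i$ with $x^i\in A$, $\alpha_i>0$ and $\sum_{i=1}^n\alpha_i=1$ (after discarding zero-coefficient terms), and to iteratively trim the support until the surviving points are in general position, while maintaining strict positivity of all coefficients throughout. The conclusion $x\in \left(co(\{x^1,\dots,x^k\})\right)^0$ will then be automatic from the observation recorded just before Theorem \ref{kaka}: for points in general position, strict positivity of the barycentric coordinates in \eqref{simplex} is equivalent to lying in the relative interior of the simplex they span, and the general position condition forces $k\leq d+1$ as required.

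For the reduction step, if $x^1,\dots,x^n$ fail to be in general position then by \eqref{defd} the vectors $x^i-x^1$, $i=2,\dots,n$, are linearly dependent, which I would use to produce scalars $\beta_1,\dots,\beta_n$, not all zero, with $\sum_i\beta_i=0$ and $\sum_i\beta_i x^i=0$. Setting $\alpha_i(t):=\alpha_i-t\beta_i$ one preserves $\sum_i \alpha_i(t)=1$ and $\sum_i \alpha_i(t) x^i=x$ for every $t$. Since the $\beta_i$ sum to zero but are not all zero, some are strictly positive; taking
\[
t^*:=\min\left\{\frac{\alpha_i}{\beta_i}\,:\,\beta_i>0\right\}>0
\]
ensures $\alpha_i(t^*)\geq 0$ for every $i$, with at least one equality. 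Dropping the indices where $\alpha_i(t^*)=0$ yields a shorter representation of $x$ by points in $A$ with strictly positive coefficients summing to $1$.

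Since the cardinality of the support strictly decreases at each reduction, after finitely many iterations the process terminates with a representation $x=\sum_{i=1}^k \mu_i x^i$ with $\mu_i>0$ for every $i$ and $\{x^1,\dots,x^k\}$ in general position (else a further reduction would be possible). The observation preceding the theorem then gives $x\in\left(co(\{x^1,\dots,x^k\})\right)^0$. The only subtle point is to guarantee general position and strict positivity simultaneously at termination; the specific choice of $t^*$ handles this, since it zeroes out only the coefficients that are discarded and leaves all remaining ones strictly positive, so positivity is an invariant of the iteration while general position is forced at the stopping time.
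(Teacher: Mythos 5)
Your argument is correct, and it is the standard dependency-reduction proof of Carath\'{e}odory's theorem in its refined form (affinely independent points, $x$ in the relative interior). Note, however, that the paper does not prove this statement at all: Theorem \ref{kaka} is quoted as a classical result with a reference to \cite{G}, so there is no proof to compare against; what you have done is supply a self-contained proof of a quoted fact. Two small remarks on your write-up. First, the observation recorded in the paper before the theorem only states one direction (relative interior implies $0<\mu_i<1$), whereas you invoke the converse (strict positivity of the barycentric coordinates implies membership in $\left(co(\{x^1,\dots,x^k\})\right)^0$); this converse is true and elementary — for affinely independent points the barycentric coordinates are affine, hence continuous, functions on $aff(\{x^1,\dots,x^k\})$, so strict positivity is an open condition relative to the affine hull — but you should say this explicitly rather than attribute an equivalence to the paper's remark. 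Second, your reduction step should record how the relation $\sum_i\beta_i=0$, $\sum_i\beta_i x^i=0$ is produced from the failure of general position (take $\gamma_2,\dots,\gamma_n$ not all zero with $\sum_{i\geq 2}\gamma_i(x^i-x^1)=0$ and set $\beta_1:=-\sum_{i\geq 2}\gamma_i$, $\beta_i:=\gamma_i$), and observe that duplicated points in the initial representation are automatically eliminated since they violate general position; neither point causes any difficulty, and the termination argument, the choice of $t^*$, and the bound $k\leq d+1$ are all correct as you state them.
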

Remember that since the vectors are in general position then
necessarily $k\leq d+1$.

We state and prove the following characterization of the convex hull
that we could not find in the literature.
\begin{Le}
\label{dueth} Let $x\in \mathbb R^d$ and $S\subseteq \mathbb R^d$.
Then $x\in co(S)$ if and only if for any hyperplane $H\ni x$ it
holds
\begin{equation}
H^+\cap S\neq \emptyset\qquad and \qquad H^-\cap S\neq \emptyset\,.
\label{duequaz}
\end{equation}
\end{Le}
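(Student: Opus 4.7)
My plan is to prove the two implications of the equivalence separately. For the forward direction, assume $x \in co(S)$ and write $x = \sum_{i=1}^n \alpha_i x^i$ with $x^i \in S$, $\alpha_i \geq 0$, $\sum_i \alpha_i = 1$. Given any hyperplane $H \ni x$ with unit normal $\nu$, I rewrite $H = \{y : (y-x)\cdot \nu = 0\}$ and exploit the identity $\sum_i \alpha_i (x^i - x)\cdot \nu = 0$: this is a nonnegative weighted sum of reals that vanishes, so either every summand is zero (in which case all $x^i$ lie on $H$, hence in both $H^+$ and $H^-$) or both signs appear among the $(x^i - x)\cdot \nu$, yielding points of $S$ in $H^+$ and in $H^-$. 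In either case \eqref{duequaz} holds.

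For the reverse direction I would argue the contrapositive. Assuming $x \notin co(S)$, I aim to produce a hyperplane $H \ni x$ with $H^- \cap S = \emptyset$ (the other side being symmetric). Since $co(S)$ is convex and does not contain $x$, the separating hyperplane theorem in $\mathbb{R}^d$ yields a nonzero vector $\nu$ and a constant $c \in \mathbb{R}$ with $x\cdot \nu \leq c \leq y\cdot \nu$ for every $y \in co(S)$, and therefore for every $y \in S$. Setting $H := \{y : y\cdot \nu = x\cdot \nu\}$ gives a hyperplane through $x$ with $S \subseteq H^+$.

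The main obstacle is that this separation is only weak, so some elements of $S$ might lie on $H$ itself and $H^- \cap S$ could still be nonempty. I would split into two sub-cases. When $x \notin \overline{co(S)}$, strict separation is available, $x\cdot \nu < y\cdot \nu$ for all $y \in S$, and $H^- \cap S = \emptyset$ follows at once. When $x \in \overline{co(S)}\setminus co(S)$, the delicate task is to produce a hyperplane through $x$ whose negative side avoids any exceptional points of $S$ which happen to sit on $H$. I would attempt to resolve this by tilting $\nu$ infinitesimally about $x$ inside $aff(S)$: by Carath\'eodory's theorem (Theorem \ref{kaka}), at most $d+1$ directions in $S$ in general position can be responsible for approximating $x$, and one would try to orient the perturbation so that points of $S$ originally on $H$ move strictly into $H^+$ while the bulk of $S$ remains on the positive side. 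Combining the two sub-cases yields a hyperplane through $x$ for which one of $H^\pm \cap S$ is empty, contradicting \eqref{duequaz} and completing the proof. The perturbation step in the second sub-case is the technical heart of the argument and where extra care is required.
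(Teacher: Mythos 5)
Your forward implication is complete and correct (and a bit more direct than the paper's contradiction argument), and your first sub-case of the converse, $x\notin\overline{co(S)}$, is also fine. The genuine problem is the second sub-case, $x\in\overline{co(S)}\setminus co(S)$, which you rightly call the technical heart but only sketch: no tilting of $\nu$ can close it, because the implication you are trying to prove is false there for general $S$. Take $d=2$, $S=\{(0,1)\}\cup\{(n,-1)\,:\,n=0,1,2,\dots\}$ and $x=(1,1)$. Any finite convex combination of points of $S$ has second coordinate $2\mu_0-1$, where $\mu_0$ is the weight of $(0,1)$, so it has second coordinate $1$ only if it equals $(0,1)$; hence $x\notin co(S)$. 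On the other hand, writing a line through $x$ as $\{(u,v):a(u-1)+b(v-1)=0\}$: if $a=0$ the point $(0,1)$ lies on the line, hence in both closed half-planes; if $a\neq0$ the values of $a(u-1)+b(v-1)$ at $(0,1)$ and at $(n,-1)$ for large $n$ have opposite signs, so $S$ meets both open half-planes. Thus \eqref{duequaz} holds for every hyperplane through $x$ although $x\notin co(S)$: the hyperplane your contrapositive needs simply does not exist, so no perturbation or Carath\'eodory argument can produce it. Note that this $S$ is even a closed subset of $\mathbb Z^2$.

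So the gap is real and cannot be filled as stated; in fairness, the paper's own proof glosses over the same point, invoking the separation theorem as if it gave $\tilde H$ with $x\in\tilde H^+$ and $\tilde H^+\cap S=\emptyset$, i.e. strict separation of $x$ from $co(S)$, which holds when $x\notin\overline{co(S)}$ but not in the boundary sub-case. Your converse, run exactly as in your first sub-case, is correct whenever $co(S)$ is closed (for instance $S$ finite or compact). For general $S$ one must strengthen the hypothesis: if every hyperplane $H\ni x$ either contains $S$ or has points of $S$ in both \emph{open} half-spaces, then $x\in co(S)$ does follow (show first that $x\in aff(S)$, then inside $aff(S)$ separate strictly when $x\notin\overline{co(S)}$, or take a supporting hyperplane at $x$ when $x$ is a relative boundary point; in either case one open side misses $S$ while $S\not\subseteq H$, a contradiction). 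That stronger condition is exactly what is available in the application to Theorem \ref{florisa}: for $p\in\mathcal B$ and $H\ni0$, \eqref{twosided} forces $\mathcal S(p)$ either to lie in $H$ or to have points strictly on both sides, so the conclusion $0\in co(\mathcal S(p))$ used there remains valid even though the lemma, as stated for arbitrary $S$, does not.
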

\begin{proof}
Note that it holds $\left(H^+\cap S\right)\cup\left(H^-\cap
S\right)=S$. First we suppose that $x\in co(S)$ and show that
\eqref{duequaz} holds. Assume by contradiction that for example
there exists $H$ such that $H^+\cap S=S$ and $H^-\cap S=\emptyset$.
This implies also that $H^-\cap co(S)=\emptyset$. As a consequence
we have
$$
x=x\cap co(S)\subseteq H^-\cap co(S)=\emptyset\,,
$$
a contradiction.

Conversely we assume that \eqref{duequaz} holds for any hyperplane
$H$ and show that this implies $x\in co(S)$. Assume by contradiction
that $x\not\in co(S)$. Then (see Theorem 4.4 in \cite{G}) there
exists a separating hyperplane $\tilde{H}$ among the two disjoint convex sets
$\left\{x\right\}$ and $co(S)$ for which it holds for example
$\tilde{H}^+\cap S=\emptyset$ and $x\in \tilde{H}^+$. If $H$ is the
hyperplane parallel to $\tilde{H}$ and containing $x$ then we have
$H^+\subseteq \tilde{H}^+$ that implies $H^+\cap S=\emptyset$, a
contradiction.
\end{proof}

We recall the following basic result of convex analysis
(see for example \cite{G})
\begin{Th}{\bf [Helly]}
\label{Helly} Consider $\left\{I_\alpha\right\}_{\alpha \in \mathcal
A}$ a collection of compact convex subsets of $\mathbb R^d$. It
holds $\bigcap_{\alpha \in \mathcal A}I_\alpha\neq \emptyset$ if and
only if for any $(\alpha_1,\dots,\alpha_{d+1})\in \mathcal A^{d+1}$
it holds $\bigcap_{i=1}^{d+1}I_{\alpha_i}\neq \emptyset$.
\end{Th}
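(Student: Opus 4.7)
The plan is to prove the nontrivial direction (the other being immediate) first for a finite family by induction, and then extend to an arbitrary family using the finite intersection property. Throughout, $n = |\mathcal{A}|$ denotes the number of sets, and the claim is established by induction on $n \geq d+1$. The base case $n=d+1$ is exactly the hypothesis.

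For the inductive step with $n\geq d+2$, I would apply the inductive hypothesis to each sub-family $\{I_j:j\neq i\}$ (which still satisfies the $(d+1)$-wise intersection property) to produce a point $x_i\in \bigcap_{j\neq i}I_j$. This yields $n\geq d+2$ points in $\mathbb{R}^d$, to which Radon's theorem applies: there exists a nontrivial partition $\{1,\dots,n\}=A\sqcup B$ and a point $z\in co(\{x_i:i\in A\})\cap co(\{x_i:i\in B\})$. For any index $k\in A$, every $x_i$ with $i\in B$ satisfies $i\neq k$, hence $x_i\in I_k$; by convexity $co(\{x_i:i\in B\})\subseteq I_k$, so $z\in I_k$. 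The symmetric argument covers $k\in B$, yielding $z\in\bigcap_{k=1}^n I_k$.

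To pass to an arbitrary $\mathcal{A}$, I fix $\alpha_0\in \mathcal{A}$ and consider the collection of closed subsets $\{I_\alpha\cap I_{\alpha_0}\}_{\alpha\in \mathcal{A}}$ of the compact set $I_{\alpha_0}$. By the finite version just established, every finite subcollection has nonempty intersection. The finite intersection property for closed subsets of a compact space then forces $\bigcap_{\alpha\in \mathcal{A}}I_\alpha\neq \emptyset$.

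The main tool is Radon's theorem, which I would prove separately if unavailable: any $d+2$ points $y_1,\dots,y_{d+2}\in \mathbb{R}^d$ admit a nontrivial relation $\sum_i \lambda_i y_i=0$ with $\sum_i \lambda_i=0$ (a dimension count in $\mathbb{R}^{d+1}$), and splitting the indices according to the sign of $\lambda_i$ produces the required partition with the common point $\sum_{i\in A}\lambda_i y_i/\sum_{i\in A}\lambda_i$. Beyond Radon, the only subtlety is the correct use of compactness in the infinite case; the finite induction itself is bookkeeping.
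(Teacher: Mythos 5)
Your proof is correct, and it is the standard textbook argument: the finite case by induction on the number of sets using Radon's theorem, then the passage to an arbitrary family of compact convex sets via the finite intersection property inside one fixed compact member. The paper itself does not prove this statement at all --- it is recalled as a classical result with a reference to the convex geometry literature --- so there is no internal proof to compare against; your write-up supplies exactly the kind of proof that reference contains. Two small points of bookkeeping, neither of which is a gap: (i) you state Radon's theorem for exactly $d+2$ points but invoke it for all $n\geq d+2$ points $x_1,\dots,x_n$; this is harmless, since either your own kernel argument works verbatim with $n$ unknowns and $d+1$ linear conditions (zero indices being assigned to either block), or one applies Radon to any $d+2$ of the points and observes that the resulting common point $z$ still lies in every $I_k$, because for $k$ outside the chosen subset all the selected $x_i$ belong to $I_k$ anyway; (ii) in the compactness step you need every \emph{finite} subfamily to have nonempty intersection, including those of fewer than $d+1$ sets, which is immediate because the hypothesis is stated for arbitrary tuples $(\alpha_1,\dots,\alpha_{d+1})\in\mathcal A^{d+1}$ and thus allows repeated indices. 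With these remarks the argument is complete; note also, as your induction makes clear, that compactness is used only in the infinite case, the finite case holding for arbitrary convex sets.
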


We defined the vector spaces $\Lambda^0, \Lambda^1, \Lambda^2$
associated to the discrete torus but it is clear that such vector
spaces can be defined for any cellular complex. Correspondingly the
following general result holds (see for example \cite{DKT},
\cite{daf} and \cite{Me}).
\begin{Th}{\bf [Discrete Hodge Decomposition]}
For any finite cellular complex it holds the following orthogonal
decomposition
\begin{equation}
\Lambda^1=\delta \Lambda^0\oplus d\Lambda^2\oplus \Lambda^1_H\,,
\nonumber
\end{equation}
where $\Lambda^1_H$ is called the subspace of harmonic one forms.
\label{DHD}
\end{Th}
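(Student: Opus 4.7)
The plan is to establish this as a standard orthogonal decomposition arising from the adjointness of the boundary/coboundary operators $d$ and $\delta$ together with the identity $d\circ d=0$ recorded as equation \eqref{dd=0}. First I would extend the inner product \eqref{scpr} to $\Lambda^0$ and $\Lambda^2$ by setting
$$\langle f,g\rangle_0:=\sum_{x\in V_N} f(x)g(x),\qquad \langle \psi,\psi'\rangle_2:=\sum_{f\in F_N'}\psi(f)\psi'(f),$$
where the antisymmetry $\psi(f^c)=-\psi(f)$ makes the last sum independent of the choice of $F_N'$. I would also record the definition of $\delta:\Lambda^1\to\Lambda^2$, namely $\delta\phi(f):=\sum_{(x,y)\in f}\phi(x,y)$, which is the circulation already introduced in the text.

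Next I would verify the adjoint identities
$$\langle \delta f,\phi\rangle_1=-\langle f,d\phi\rangle_0,\qquad \langle d\psi,\phi\rangle_1=\langle \psi,\delta\phi\rangle_2,$$
for all $f\in\Lambda^0$, $\phi\in\Lambda^1$, $\psi\in\Lambda^2$. Both are obtained by passing from the sum over $E_N'$ to the full symmetric sum over $E_N$, which yields a factor $1/2$, and then renaming $(x,y)\leftrightarrow(y,x)$, exploiting that $\phi(x,y)$ is antisymmetric while $f(y)-f(x)$ and $d\psi(x,y)$ behave correspondingly, so that the products of interest are symmetric. The analogous manipulation on the $F_N$ side, using antisymmetry of both $\psi$ and $\delta\phi$ under $f\mapsto f^c$, gives the second identity.

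With these adjoints in hand the orthogonality of $\delta\Lambda^0$ and $d\Lambda^2$ is immediate from \eqref{dd=0}:
$$\langle \delta f,d\psi\rangle_1=-\langle f,d(d\psi)\rangle_0=0.$$
In particular $\delta\Lambda^0\cap d\Lambda^2=\{0\}$, so their sum is a direct sum, and I would then define
$$\Lambda^1_H:=\bigl(\delta\Lambda^0\oplus d\Lambda^2\bigr)^\perp.$$
Since $\Lambda^1$ is a finite-dimensional inner product space, the standard orthogonal complement theorem yields $\Lambda^1=\delta\Lambda^0\oplus d\Lambda^2\oplus \Lambda^1_H$. As a by-product, the adjoint identities give the kernel characterization $\phi\in\Lambda^1_H\iff d\phi=0\text{ and }\delta\phi=0$, which justifies the name ``harmonic one forms''.

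The only nontrivial point is the bookkeeping of signs and orientations in verifying the adjoint identities; once those are established the remainder is routine finite-dimensional linear algebra. The argument is entirely formal and applies verbatim to any finite cellular complex, since we have used only the antisymmetry conventions for vector fields and $2$-chains together with $d\circ d=0$.
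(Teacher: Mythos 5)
Your proof is correct and follows essentially the same route as the paper: the paper likewise establishes the pairing identities $\langle\phi,\delta\,\mathbb I_x\rangle_{1}=-d\phi(x)$ and $\langle\phi,d\psi_g\rangle_{1}=\delta\phi(g)$ (your adjoint identities evaluated on basis elements), uses $d(d\psi)=0$ to get the orthogonality of $\delta\Lambda^0$ and $d\Lambda^2$, and defines $\Lambda^1_H$ as the leftover orthogonal complement, i.e.\ $(\delta\Lambda^0)^\bot\cap(d\Lambda^2)^\bot$. The only difference is presentational: you state the adjointness abstractly and in full generality, whereas the paper sketches the same argument concretely for the two-dimensional torus, adding a dimension count that identifies $\Lambda^1_H$ explicitly.
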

We briefly discuss the ideas behind the proof of this Theorem
considering the case of the 2 dimensional torus. The elements of
$$
\delta \Lambda^0:=\left\{\phi\in \Lambda^1\ :\ \exists f\in
\Lambda^0\ s.t.\ \phi=\delta f\right\}\,,
$$
are called potentials or gradient vector fields. The dimension of
the vector space $\Lambda^0$ is $|V_N|$. It is easy to see that the
kernel of the co-boundary operator $\delta$ on $\Lambda^0$ coincides
with the constant functions, and in particular is a subspace of
dimension $1$. By the general identity
\begin{equation}
dim\left(\Lambda^0\right)=dim\left(Ker(\delta_{|\Lambda^0})\right)+dim\left(\delta
\Lambda^0\right)\,, \label{ker}
\end{equation}
we deduce that $dim(\delta \Lambda^0)=|V_N|-1=N^2-1$, where
$dim(\cdot)$ denotes the dimension. The orthogonal complement of
$\delta \Lambda^0$ in $\Lambda^1$ is easily characterized. The
subspace $\delta \Lambda^0$ of $\Lambda^1$ is spanned by the
elements $\left\{\delta\, \mathbb I_x\right\}_{x\in V}$, where
$\mathbb I_x$ is the characteristic function of $x\in V_N$. We
deduce that an element $\phi\in \Lambda^1$ belongs to the orthogonal
complement of $\delta \Lambda^0$ if and only if for any $x\in V_N$
it holds
\begin{equation}
\langle\phi,\delta \,\mathbb I_x\rangle_{1}=-d
\phi(x)=0\,.
\nonumber
\end{equation}
This means that the orthogonal complement of $\delta \Lambda^0$ is
the set of divergence free discrete vector fields also called
circulations.

We have also that $dim(\Lambda^2)=|F'_N|=N^2$. It is easy to see
that the kernel of the boundary operator $d$ on $\Lambda^2$ is the
one dimensional subspace of the constant 2 forms
$$
Ker(d_{|\Lambda^2})=\left\{\psi\,:\, \psi(f)=c\,,\, \forall f\in
F_N'\,,\, c\in \mathbb R \right\}\,.
$$
By the formula analogous to \eqref{ker} we deduce
$dim(d\Lambda^2)=N^2-1$. To show a part of the orthogonal
decomposition in Theorem \ref{DHD} it is then enough to show that
$d\psi$ is divergence free for any $\psi\in \Lambda^2$. This is
exactly the content of formula \eqref{dd=0}. Elements of
$d\Lambda^2$ are called $0-$homologous circulations. The
orthogonal complement $(d\Lambda^2)^\bot$ is characterized as
follows. Given $g\in F'_N$ we define $\psi_g\in \Lambda^2$ as
\begin{equation}
\psi_g(f):=\left\{
\begin{array}{ll}
+1 & \hbox{if} \ f=g\,,\\
-1 & \hbox{if} \ f=g^c\,,\\
0 & \hbox{otherwise}\,.\\
\end{array}
\right.\nonumber
\end{equation}
Clearly $d\Lambda^2$ is spanned by $\left\{d\psi_g\right\}_{g\in
F'_N}$. An element $\phi\in \Lambda^1$ belongs to
 $(d\Lambda^2)^\bot$ if and only if for any $g\in F'_N$ it holds
\begin{equation}
\langle \phi,d\psi_g\rangle_{1}= \delta \phi(g)=0\,. \label{rf}
\end{equation}
This means that the rotation of $\phi$ around any $g\in F'_N$ is
zero. An element $\phi\in \Lambda^1$ that satisfies condition
\eqref{rf} for any $g\in F'_N$ is called rotation free. Clearly any
gradient vector field is rotation free.

Finally if we define
\begin{equation}
\Lambda^1_H:=(\delta \Lambda^0)^\bot\cap(d \Lambda^2)^\bot\,,
\label{har}
\end{equation}
the orthogonal decomposition is proved. By \eqref{har} we have that
the elements of $\Lambda^1_H$ are rotation free circulations. Such a
kind of discrete vector fields are called  harmonic. From the
dimensional counting we have $dim\left(\Lambda^1_H\right)=2$.

Let us define $\phi_i\in \Lambda^1$, $i=1,2$, as
\begin{equation}
\phi_i(x,y):=\left\{
\begin{array}{ll}
+1 & \hbox{if} \ y=x+e^{(i)}/N\\
-1 & \hbox{if} \ y=x-e^{(i)}/N\\
0 & \hbox{otherwise}\,.
\end{array}
\right. \label{B1}
\end{equation}
As it is easy to check  $\phi_1$ and $\phi_2$ are linearly
independent rotation free circulations. Since
$dim\left(\Lambda^1_H\right)=2$ we can identify
\begin{equation}
\Lambda^1_H=\left\{c_1\phi_1+c_2\phi_2\ ,\ c_i\in \mathbb
R\right\}\,. \nonumber
\end{equation}

\medskip

Consider a square matrix $M$ whose rows and columns are labeled by a
finite set $V$ and call $M(x,y)$ the element corresponding to row
$x$ and column $y$. The matrix $M$ is called bi-stochastic if its
elements are non negative and moreover
$$
1=\sum_{y'\in V}M(x,y')=\sum_{x'\in V}M(x',y)\,, \qquad \forall
x,y\in V\,.
$$
These conditions identify a convex compact subset called the
Birkhoff polytope.  To every element $\pi\in Sym(V)$, the
permutation group on $V$, we can associate a $|V|\times |V|$ matrix
$M_\pi$ called the permutation matrix. It is defined as
\begin{equation}
M_\pi(x,y):=\left\{
\begin{array}{ll}
1 & \hbox{if} \ y=\pi(x)\,,\\
0 & \hbox{otherwise}\,.
\end{array}
\right.\label{permaflex}
\end{equation}
It is clear that $|V|\times |V|$ matrices having positive elements
and whose rows and columns are labeled by elements of $V$, are in
bijection with weighted oriented graphs on $V$. The bijection
identifies a matrix $M$ and a graph $(V,r)$ when $M(x,y)=r(x,y)$ for
any $x,y\in V$. For example the weighted graph corresponding to
\eqref{permaflex} has weights $r^\pi(x,y):=1$ if $y=\pi(x)$ and zero
otherwise. The following result is classic (see for example
\cite{G}).
\begin{Th}{\bf [Birkhoff-Von-Neumann]}
The set of $|V|\times |V|$ bi-stochastic matrices is convex and
compact. Its extremal elements are the permutation matrices on $V$.
\label{BVN}
\end{Th}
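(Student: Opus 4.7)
The convexity of the set of bi-stochastic matrices and its compactness are immediate: the set is cut out of $\bbR^{|V|\times|V|}$ by finitely many linear equalities and inequalities and is bounded, since every entry lies in $[0,1]$. The substance of the theorem is therefore the identification of the extremal elements.

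First I would check that every permutation matrix $M_\pi$ is extremal. If $M_\pi=tA+(1-t)B$ with $t\in(0,1)$ and $A,B$ bi-stochastic, then at every pair $(x,y)$ with $M_\pi(x,y)=0$ the non-negativity of $A$ and $B$ forces $A(x,y)=B(x,y)=0$. The row-sum and column-sum constraints then pin down $A(x,\pi(x))=B(x,\pi(x))=1$ for every $x$, so $A=B=M_\pi$.

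For the converse I would show that every bi-stochastic $M$ which is not a permutation matrix can be written as a nontrivial convex combination of two distinct bi-stochastic matrices. The key observation is that if some $M(x,y)\in(0,1)$, then, because row $x$ sums to $1$, the row $x$ must contain at least one further entry strictly in $(0,1)$, and the analogous statement holds for columns. Consider the bipartite graph $G$ on $V\sqcup V$ whose edges are those pairs $(x,y)$ with $0<M(x,y)<1$. By the previous remark every non-isolated vertex of $G$ has degree at least $2$, so $G$ contains a cycle $x_1,y_1,x_2,y_2,\dots,x_k,y_k,x_1$ alternating between row-vertices and column-vertices.

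Define $N\in \bbR^{|V|\times|V|}$ by $N(x_i,y_i)=+1$, $N(x_{i+1},y_i)=-1$ (indices modulo $k$), and zero elsewhere. Because the $\pm 1$ pattern alternates along the cycle, every row sum and every column sum of $N$ vanishes, and the support of $N$ is contained in entries where $0<M<1$; hence for sufficiently small $\epsilon>0$ both $M+\epsilon N$ and $M-\epsilon N$ are bi-stochastic and distinct, while $M=\tfrac12(M+\epsilon N)+\tfrac12(M-\epsilon N)$. Thus $M$ is not extremal. The only genuinely nontrivial point is the degree-two-forces-a-cycle step, which is standard for finite graphs; everything else is a routine verification that the alternating sign pattern on the cycle produces a matrix with zero marginals.
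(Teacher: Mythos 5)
Your proof is correct. Note, however, that the paper does not prove this statement at all: Theorem \ref{BVN} is quoted as a classical result with a reference to \cite{G}, and it is only \emph{used} (together with Krein--Milman) to obtain the decomposition \eqref{bafana} of a bi-stochastic matrix into permutation matrices. So there is no internal proof to compare with; what you have written is the standard self-contained argument: convexity and compactness come from the defining linear constraints and boundedness of the entries, extremality of $M_\pi$ from the support/marginal argument, and the converse from the cycle-perturbation device, i.e.\ the bipartite graph of fractional entries has all non-isolated degrees at least $2$, hence contains an (even, alternating) cycle, and the alternating $\pm\epsilon$ perturbation $N$ along that cycle has zero row and column sums, giving $M=\tfrac12(M+\epsilon N)+\tfrac12(M-\epsilon N)$ with both summands bi-stochastic and distinct. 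The only step you leave implicit is that a bi-stochastic matrix which is not a permutation matrix must have an entry in $(0,1)$: if all entries were $0$ or $1$, the unit row and column sums would force exactly one $1$ in each row and column, i.e.\ a permutation matrix. With that one-line remark inserted, the argument is complete, and it is in fact the proof one finds in the cited literature.
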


\section{Cyclic random walks on $\mathbb Z^d$}
\label{santorogo}
Recall the definition of irreducible equivalence class of cycles stated just before Theorem \ref{teoesterm}.
The following Lemma identifies an important class of irreducible equivalence classes of cycles.

\begin{Le}
Consider $\underline w=(w^1,\dots,w^k)\in (\mathbb Z^d)^k$ in
general position and such that $0\in co(\{w^1,\dots,w^k\})^0$. Then
there exists an unique collection of strictly positive natural
numbers $n_1,\dots,n_k$ such that
$$[C]=\{(w^1,n_1),\dots,(w^k,n_k)\}$$
is an irreducible element of $\mathcal C$. We will call $\mathcal
C^*$ the set of irreducible elements of $\mathcal C$ obtained in
this way. \label{lemmaestrem}
\end{Le}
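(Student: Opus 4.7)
The plan is to use the algebraic structure of integer relations between the $w^i$ to pin down $(n_1,\dots,n_k)$ explicitly, and then to exploit the resulting irreducibility to produce an honest self-avoiding cycle.

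First, since $\underline w$ is in general position the vectors $d^1,\dots,d^{k-1}$ of \eqref{defd} are linearly independent, so every real relation $\sum_{i=1}^k a_i w^i = 0$ rewrites as $\bigl(\sum_i a_i\bigr)w^1 + \sum_{j=1}^{k-1} a_{j+1}\,d^j = 0$. Comparing this with the particular relation $0=\sum_i \mu_i w^i$ supplied by $0\in co(\{w^i\})^0$ (so $\mu_i>0$ and $\sum \mu_i=1$), linear independence of the $d^j$ forces every such $(a_1,\dots,a_k)$ to be a scalar multiple of $(\mu_1,\dots,\mu_k)$. Consequently the $\mathbb Z$-linear map $\Phi:\mathbb Z^k\to\mathbb Z^d$, $(a_i)\mapsto \sum_i a_i w^i$, has kernel of rank one, and this kernel admits a unique positive primitive generator $(p_1,\dots,p_k)$ with $\gcd(p_1,\dots,p_k)=1$ and $p_i>0$ (the signs are dictated by $\mu_i>0$); in particular the $\mu_i$ are rational.

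Second, every strictly positive integer solution of \eqref{sum0} is of the form $(n_1,\dots,n_k)=\ell(p_1,\dots,p_k)$ for some $\ell\in\mathbb N$. If $\ell\geq 2$, the proper sub-multiset $\{(w^i,p_i)\}\subset\{(w^i,\ell p_i)\}$ already sums to zero, so irreducibility fails. This leaves only $(n_i)=(p_i)$ as a candidate. To check that it is irreducible, consider any $(b_1,\dots,b_k)$ with $0\leq b_i\leq p_i$ and $\sum_i b_i w^i=0$: then $(b_i)\in\ker\Phi$, so $(b_i)=s(p_i)$ for some $s\in\mathbb Z_{\geq 0}$, and the two-sided bound forces $s\in\{0,1\}$, ruling out any proper non-trivial sub-multiset with vanishing sum.

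Third, I must verify that $[C]:=\{(w^i,p_i)\}$ really belongs to $\mathcal C$, i.e.\ is realized by a genuine self-avoiding cycle; the authors warn just after \eqref{sum0} that $\sum n_i w^i=0$ alone does not ensure this. Observe first that $0\in co(\{w^i\})^0$ with all $\mu_i>0$ prevents any $w^i$ from being zero (otherwise the origin would be a vertex, hence a boundary point, of the simplex). Fix any ordering $w^{\sigma(1)},\dots,w^{\sigma(n)}$ of the displacements, with $n=\sum_i p_i$, and set $s_0=0$, $s_j=\sum_{t\leq j}w^{\sigma(t)}$. A coincidence $s_j=s_{j'}$ with $0\leq j<j'\leq n-1$ would correspond to a proper non-trivial sub-multiset $\{w^{\sigma(j+1)},\dots,w^{\sigma(j')}\}$ summing to zero, which the irreducibility just established excludes. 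Hence $(s_0,\dots,s_{n-1},s_n)$ is a bona fide cycle whose displacement-vector multiset is $\{(w^i,p_i)\}$.

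The delicate step is this last one: realizability as a self-avoiding closed walk, which the paper itself singles out as a priori non-automatic. The payoff of framing things through the primitive kernel generator is that irreducibility, once obtained on the algebraic side, immediately forbids all collisions of partial sums, so the cycle exists for every ordering of the displacements and no separate combinatorial construction is required.
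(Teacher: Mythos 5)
Your proof is correct, and it arrives at the same arithmetic object as the paper by a somewhat cleaner route. Both arguments pivot on the same key fact: general position makes the space of linear relations among $w^1,\dots,w^k$ one--dimensional, spanned by the barycentric vector $\mu$ of \eqref{simplex}. The paper extracts integrality by solving for $\mu$ via Cramer's rule on an integer system (so $\mu_i=a_i/b_i$ is rational), sets $n_i=b\mu_i$ with $b$ the least common multiple of the denominators, and gets irreducibility and uniqueness from an lcm inequality; you instead work with the rank--one lattice $\ker\Phi\subseteq\mathbb Z^k$ and its positive primitive generator $p$, after which irreducibility (a zero--sum sub-multiset corresponds to $b\in\ker\Phi$ with $0\leq b_i\leq p_i$, forcing $b=0$ or $b=p$) and uniqueness (positive solutions of \eqref{sum0} are $\ell p$, and $\ell\geq 2$ contains the proper zero--sum sub-multiset $\{(w^i,p_i)\}$) are immediate. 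The two normalizations coincide --- the paper's $n_i=b\mu_i$ is exactly the primitive generator --- so the content is the same, but your lattice formulation avoids the denominator bookkeeping; conversely, your step ``the integer kernel has rank one'' tacitly uses that the rank of an integer matrix is the same over $\mathbb Q$ and $\mathbb R$, which is precisely the rationality fact the paper makes explicit. You also supply a detail the paper only asserts: that the multiset $\{(w^i,p_i)\}$ is realized by a genuine self-avoiding cycle, proved via the observation that a coincidence of partial sums would yield a proper nonempty zero--sum sub-multiset, contradicting the irreducibility already established; this is worth having on record. The only blemish is the parenthetical claim that no $w^i$ can vanish, which fails in the degenerate case $k=1$, $w^1=0$ allowed by the convention on general position; there the statement is anyway witnessed by the trivial cycle with multiset $\{(0,1)\}$, and your partial-sum construction still goes through, so nothing is lost.
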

\begin{proof}
If $y\in co(\{w^1,\dots,w^k\})^{0}\cap \mathbb Z^d$ then the element
$\mu \in \mathbb S^k$, uniquely determined by \eqref{simplex} has
the coordinates $\mu_i$ that are rational numbers. Indeed if we call
$A$ the $(d+1)\times k$ matrix whose $i$-column is $(w^i_1,
\dots,w^i_d,1)$, then $A$ has rank $k$. Every $k\times k$ sub-matrix
containing the last row has determinant different from zero. This
follows easily by the fact that the vectors are in general position.
Moreover $\mu$ is the unique solution of the linear system of $d+1$
equations in the $k$ variables $\mu_i$
\begin{equation}
\sum_{j=1}^kA_{i,j}\mu_j=\delta_{i,d+1}+(1-\delta_{i,d+1})y_i\,, \ \
\ \ \ i=1,\dots, d+1\,,\label{sist}
\end{equation}
where $\delta_{i,j}$ is the Kronecker delta. The unique solution to
\eqref{sist} can be obtained by Cramer formula applied to $k$
equations among which the last one. The matrices to be used have
integer coefficients so that the solution is a vector of rational
numbers. By construction they are also strictly positive. This means
that if we fix $y=0$ in \eqref{simplex} we obtain rational values
for the $\mu_i$. Let us write $\mu_i=\frac{a_i}{b_i}$ where $a_i$
and $b_i$ are natural numbers and $\frac{a_i}{b_i}$ is an
irreducible fraction for every $i$. Let $b:=lcm \{b_1,\dots,b_k\}$
the least common multiple and define the natural numbers
\begin{equation}
n_i:=b\mu_i\,, \ \ \ \ \  i=1, \dots, k\,.
\nonumber
\end{equation}
We have
\begin{equation}
\sum_{i=1}^kn_iw^i=b\sum_{i=1}^k\mu_iw^i=0\,. \label{prima}
\end{equation}
We show now that
\begin{equation}
\sum_{i=1}^km_i w^i\neq 0\,, \label{dorme}
\end{equation}
when the strict inclusion
$$
\{(w^1,m_1),\dots,(w^k,m_k)\}\subset
\{(w^1,n_1),\dots,(w^k,n_k)\}\,,
$$
holds. Indeed, if \eqref{dorme} is false then
\begin{equation}
\sum_{i=1}^k\frac{m_i}{(\sum_{j=i}^km_j)} w^i= 0\,, \label{dorme2}
\end{equation}
and since the vectors $w^i$ are in general position we deduce
\begin{equation}
\frac{m_i}{\sum_{j=i}^km_j}=\mu_i=\frac{a_i}{b_i}\,. \label{gelato}
\end{equation}
This implies
$$
lcm\{b_1,\dots,b_k\}\leq\sum_{j=i}^km_j<\sum_{j=i}^kn_j=b=lcm\{b_1,\dots,b_k\}\,,
$$
and we get a contradiction. Equations \eqref{prima} and
\eqref{dorme} imply that if we define
$$
[C]:=\left\{(w^1,n_1), \dots,(w^k,n_k)\right\}\,,
$$
then $[C]\in \mathcal C$ and moreover it is irreducible.

It remains to show that the numbers $n_i$ are uniquely
characterized. Let us suppose that $\{(w^1,m_1),\dots,(w^k,m_k)\}$
is an irreducible element of $\mathcal C$, then we want to show that
necessarily $m_i=n_i$. Clearly for such a collection of integer
numbers $m_i$, equations \eqref{dorme2} and \eqref{gelato} hold. We
deduce $\sum_{j=i}^km_j=lb$, with $l\in \mathbb N$ and $l\geq 1$.
When $l\geq 2$ then $\{(w^1,m_1),\dots,(w^k,m_k)\}$ is not
irreducible. When $l=1$ then $m_i=n_i$ and we obtain $[C]$.
\end{proof}

Not all irreducible equivalence classes of cycles belong to
$\mathcal C^*$. Indeed consider
$$
w^1:=(-5,-5)\,,\ w^2:=(1,1)\,,\  w^3:=(2,2)\,,
$$
vectors in $\mathbb Z^2$ and the equivalence class $\left\{(w^1,1),
(w^2,1), (w^3,2)\right\}$. It is easy to see that such equivalence
class is irreducible nevertheless it does not belong to $\mathcal
C^*$.

\smallskip

\noindent {\it Proof of Theorem \eqref{florisa}.} First we show that
if $p\in \mathcal B$ then it is cyclic. Since $p\in\mathcal B$ it
holds \eqref{twosided} for any $H$. Equation \eqref{twosided} can
hold only if \eqref{duequaz} holds with $S=\mathcal S(p)$. By Lemma
\ref{dueth} we deduce that $0\in co(\mathcal S(p))$. We can then
apply Carath\'{e}odory's Theorem \ref{kaka} and deduce that there
exists $\underline w=(w^1,\dots,w^k)$ in general position with
$w^i\in \mathcal S(p)$ and such that $0\in co(\{w^1,\dots,w^k\})^0$.
Let $[C]\in \mathcal C^*$ be the corresponding irreducible
equivalence class of cycles as constructed in Lemma
\ref{lemmaestrem}. The corresponding purely cyclic measure is
\begin{equation}
p^{[C]}=\sum_{i=1}^k\mu_i(\underline w)\delta_{w^i}:=q_1\,.
\label{defq}
\end{equation}
Here and hereafter we call $\mu(\underline w)$ the unique element of
$\mathbb S^k$ determined by \eqref{simplex} with $y=0$ when the
elements of $\underline w$ are in general position and moreover
$0\in co\left(\{x^1,\dots ,x^k\}\right)^0$.

Let
\begin{equation}
m_1:=\min\left\{\frac{p(w^i)}{q_1(w^i)}\,,\,
i=1,\dots,k\right\}>0\,. \label{m1}
\end{equation}
We have that $m_1q_1\preceq p$. This implies that $p_1:=p-m_1q_1$
belongs to $\mathcal M^{\leq 1}$ and moreover, since $q_1$ is a mean
zero probability measure, $p_1$ is balanced. Note
also that by construction there exists $x\in \mathcal S(p)$ such
that $p_1(x)=0$. Such an $x$ is a vector $w^i$ that minimizes
\eqref{m1}.

We want now iterate this procedure. More precisely let $p_i$ be a balanced measure.
Then  $0\in co(\mathcal S(p_i))$. As before
we can apply Carath\'{e}odory's Theorem identifying vectors
$\underline w=(w^1,\dots, w^k)$ in general position, determine the
corresponding element of $\mathcal C^*$ and define like in
\eqref{defq} the corresponding purely cyclic measure $q_{i+1}$. Then
we define
\begin{equation}
m_{i+1}:=\min\left\{\frac{p_i(w^j)}{q_{i+1}(w^j)}\,,\,
j=1,\dots,k\right\}>0\,, \label{recurenzo}
\end{equation}
and finally call
\begin{equation}
p_{i+1}:=p_i-m_{i+1}q_{i+1}\,, \label{recurenza}
\end{equation}
that is still a balanced element of $\mathcal M^{\leq 1}$. When $|\mathcal
S(p)|<+\infty$, after a finite number $l$ of iterations of the above
procedure we obtain $p_l=0$. This follows directly by the fact that
$|\mathcal S(p_{i+1})|\leq |\mathcal S(p_{i})|-1$. As a consequence
we get
\begin{equation}
p=\sum_{i=1}^lm_iq_i\,.\label{finfin}
\end{equation}
Recalling that the probability measures $q_i$ are purely cyclic,
equation \eqref{finfin} is exactly a representation
of $p$ of the type
\begin{equation}
p=p^\rho=\sum_{[C]\in\mathcal C^*}\rho([C])p^{[C]}\,, \label{pollo}
\end{equation}
where the measure $\rho$ gives weight $m_i$ to the unique element of
$\mathcal C^*$ associated to $q_i$, $i=1,\dots,l$.

When $|\mathcal S(p)|=+\infty$ we need to implement the iterative
procedure in a suitable way. Let us introduce the family of cubes
\begin{equation}
\Lambda_n:=\left\{x\in \mathbb Z^d\,:\, \max_i |x_i|\leq n
\right\}\,.
\nonumber
\end{equation}
Given a balanced $p_i$ we define
$$
n_i:=\inf\left\{n\in \mathbb N\,:\, 0\in co\left(\mathcal S(p_i)\cap
\Lambda_n\right)\right\}\,.
$$
Clearly $n_i<+\infty$, since $0\in co(\mathcal S(p_i))$ and
consequently there exists a finite number of elements in $\mathcal
S(p_i)$ whose convex envelope contains the origin. We can then
define the measure $q_{i+1}$ as in \eqref{defq} using vectors
$\underline w$ in general position and belonging to
$\Lambda_{n_i}\cap \mathcal S(p_i)$. Defining $p_{i+1}$ like in
\eqref{recurenzo} and \eqref{recurenza} we have $\mathcal
S(p_{i+1})\subset \mathcal S(p_{i})$ from which we deduce
$n_{i+1}\geq n_i$. We obtain in this way an increasing family of
cubes $\Lambda_{n_i}\subseteq \Lambda_{n_{i+1}}$ and a decreasing
family of measures $p_{i+1}\preceq p_i$. We now show that
necessarily
\begin{equation}
\left\{
\begin{array}{l}
\lim_{i\to +\infty}\Lambda_{n_i}=\mathbb Z^d\,, \\
\lim_{i\to +\infty}p_i=0\,,
\end{array}
\right.\label{limitlimit}
\end{equation}
where in the second limit it is enough to show just pointwise
convergence. As a consequence, taking the limit $j\to +\infty$ in
$$
p_j=p-\sum_{i=1}^jm_iq_i\,,
$$
we obtain
\begin{equation}
p=\sum_{i=1}^{+\infty}m_iq_i\,.
\label{finfin1}
\end{equation}
This identifies $p$ with $p^{\rho}$ like in \eqref{pollo}, where the
probability measure $\rho$ on $\mathcal C^*$  gives weight $m_i$ to
the unique element of $\mathcal C^*$ associated to $q_i$.

We need then to show \eqref{limitlimit}. Both sequences are monotone
and then the limits exist. Let us suppose by contradiction that
$\lim_{i\to +\infty}n_i=n^*<+\infty$. Note that in this case for any
$i$ we have $\mathcal S(q_i)\subseteq \Lambda_{n^*}$ and moreover
$$
|\mathcal S(p_{i+1})\cap\Lambda_{n^*}|\leq |\mathcal
S(p_{i})\cap\Lambda_{n^*}|-1\,.
$$
In particular after a finite number of iterations, say $j$ we have
that $\mathcal S(p_j)\cap \Lambda_{n^*}=\emptyset$. This implies
$n_{j+1}>n^*$ and we get a contradiction.

Finally let us suppose by contradiction that $\lim_{i\to
+\infty}p_i=p^*\neq 0$. Then, by monotone convergence Theorem,
$p^*$ is balanced and consequently $0\in co(\mathcal S(p^*))$. Let
us call $w^1,\dots ,w^k$ a finite collection of vectors in general
position belonging to $\mathcal S(p^*)$ and such that $0\in
co(\{w^1,\dots,w^k\})^0$. Consider $j\in \mathbb N$ such that
$\{w^1,\dots,w^k\}\subseteq \Lambda_{n_j-1}$. By construction we
have $\mathcal S(p^*)\subset \mathcal S(p_j)$ and moreover $0\not
\in co\left(\mathcal S(p_j)\cap\Lambda_{n_j-1}\right)$. This is a
contradiction.
\smallskip

We now prove that if $p$ is cyclic then $p\in \mathcal B$. By
hypothesis (recall \eqref{igora} we have
\begin{equation}
p=\lim_{n \to +\infty}\sum_{[C]\in \mathcal C_n}\rho([C])p^{[C]}\,,
\label{nespoli}
\end{equation}
and for any fixed $n$ the sum on the r.h.s. of \eqref{nespoli}
satisfies \eqref{twosided} for any $H$. We get immediately the
validity of \eqref{twosided} for $p$ by applying the monotone
convergence Theorem. \qed

\medskip

\noindent {\it Proof of Theorem \eqref{teoesterm}.} The extremality
of $[C]\in \mathcal C^*$ follows directly by the irreducibility. The
validity of \eqref{airberlin} has been implicitly \ proved during
the proof of Theorem \ref{florisa}.\qed

\medskip

\noindent {\it Proof of Theorem \eqref{procbreve}.} To show the
identification it is enough to show that $\overrightarrow{\mathcal
M_0}\cap\mathcal M^1$ coincides with the set of cyclic measures.

Let $p$ be a cyclic probability measure, then clearly $p\in \mathcal
M^1$. Moreover it holds \eqref{nespoli} and this is the monotone limit assuring $p\in
\overrightarrow{\mathcal M_0}$.

Conversely take $p\in \overrightarrow{\mathcal M_0}\cap\mathcal
M^1$, we need to show that $p$ is cyclic. By definition $p\in
\mathcal M^1$ and there exists a non decreasing sequence $p^n\in
\mathcal M_0$ such that $p=\lim_{n\to +\infty}p^n$. Since $p^n\in
\mathcal M_0$ then also $\Delta^n:=p^n-p^{n-1}\in \mathcal M_0$,
where we defined $p^0:=0$. This means that we can write
$\Delta^n=\sum_{[C]\in \mathcal C^*}\rho^n([C])p^{[C]}$ for suitable
measures $\rho^n$. This follows by the fact that any element of
$\mathcal M_0$ is balanced and then by Theorem \ref{florisa} is
cyclic. Since we have $p=\sum_{n=1}^{+\infty}\Delta^n$ we get
\eqref{decomp} with $\rho([C])=\sum_{n=1}^{+\infty}\rho^n([C])$ and
$p$ on the l.h.s.. \qed

\section{Cyclic random walks on a finite set}
\label{crwfg}

The following results are elementary but useful for the forthcoming
results.
\medskip

{\it Proof of Theorem \ref{teofin}}
Any purely cyclic graph is balanced. This implies that a necessary
condition for the validity of \eqref{decgr} is that $r$ is balanced.
Let us now show the other implication. Let
$$
m^*_1:=\min_{(x,y)\in
E(r)}r(x,y)>0
$$
and let $(z^0,z^1)\in E(r)$ such that $r(z^0,z^1)=m_1^*$. Due to the
balancing condition \ref{bal222} and the definition of $m_1^*$ there
exists an edge $(z^1,z^2)\in E(r)$ with $r(z^1,z^2)\geq m_1^*$. By
the same argument, if $z^2\neq z^0$, there exists $(z^2,z^3)\in
E(r)$ such that $r(z^2,z^3)\geq m_1^*$. We can iterate this
procedure up to the first time we visit twice a vertex of $V$. Since
$V$ is finite this happens after at most $|V|$ iterations. We obtain
in this way a sequence $z^0,z^1, \dots, z^{n-1}$ of distinct
elements of $V$ such that $(z^i,z^{i+1})\in E(r)$ and moreover a
$z^n$ such that $z^n=z^j$ for some $0\leq j\leq n-1$. We call $C_1$
the cycle $C_1:=(z^j,z^{j+1},\dots, z^n)$. We also call
$m_1:=\min_{i=j,\dots,n-1}r(z^i,z^{i+1})\geq m_1^*>0$. Clearly we
have $(V,m_1r^{[C_1]})\preceq (V,r)$.  As a consequence we have that
$r-m_1r^{[C_1]}\in W$, it is still balanced and moreover it holds
$|E\left(r-m_1r^{[C_1]}\right)|\leq E(r)-1$. This last inequality
implies that after a finite number (at most $|E(r)|$) of iterations
of the above procedure we obtain
$$
r=\sum_{i=1}^lm_ir^{[C_i]}\,,
$$
that is the decomposition \eqref{decgr} with the measure $\rho$ that
gives weight $m_i$ to $[C_i]\in \mathcal C$ \qed
\begin{Rem}
The validity of the balancing condition \ref{bal222} implies that
the uniform measure $\pi(x):=\frac{1}{|V|}$ satisfies the stationary
condition
\begin{equation}
\pi(x)\sum_{\left\{y\,:\,(x,y)\in
E(r)\right\}}r(x,y)=\sum_{\left\{y\,:\,(y,x)\in
E(r)\right\}}\pi(y)r(y,x)\,.
\nonumber
\end{equation}
If the Markov chain is irreducible, this is the unique invariant measure.
\end{Rem}
In the case of a continuous time Markov chain the set of balanced
rates is convex but not compact. In the case of discrete time Markov
chains, in addition to the balancing condition there are also the
conditions \eqref{norm}. The constraints $r\geq 0$, \eqref{norm} and
\eqref{balancing} for every $x$, identify the Birkhoff polytope. The
extremal elements of the Birkhoff polytope are then characterized by
the Birkhoff-Von-Neumann Theorem \ref{BVN}. Theorem \ref{BVN}
together with the classical statement of Krein-Milmann Theorem
\cite{G}, \cite{P} implies that, given any bi-stochastic matrix $M$,
we can decompose it like
\begin{equation}
M=\sum_{\pi \in Sym(V)}m_\pi M_\pi\,, \nonumber
\end{equation}
where $m$ is a probability measure on $Sym(V)$. Written in terms of
weights this equation becomes
\begin{equation}
r=\sum_{\pi\in Sym(V)}m_\pi r^\pi\,. \label{bafana}
\end{equation}
Recall the classical result that every permutation can be decomposed
into disjoint cycles. It is easy to see that in terms of weights
this means that for every $\pi \in Sym(V)$ we can write
\begin{equation}
r^\pi=\sum_ir^{[C_i^\pi]}\,, \label{decper}
\end{equation}
where the $C_i^\pi$ constitutes a family of disjoint cycles such
that every element of $V$ belongs to one of them. The cycle
containing the element $x^0\in V$ can be written as
$(x^0,\pi(x^0),\pi^2(x^0),\dots,\pi^l(x^0), x^0)$, where with
$\pi^m$ we denote the composition of $m-$times the element $\pi\in
Sym(V)$ and $l$ is the minimal integer such that
$\pi^{l+1}(x^0)=x^0$. Putting together \eqref{bafana} and
\eqref{decper} we obtain a special decomposition like \eqref{decgr}.

\smallskip

We finish the section discussing the case of an infinite graph. We
simply reinterpret the results of subsection \ref{ciclzd} in terms
of an infinite weighted graphs with vertices $V=\mathbb Z^d$ and
edges $E=\mathbb Z^d\times \mathbb Z^d$. The weighted graph
corresponding to the translation invariant Markov chain on
$V=\mathbb Z^d$ defined by \eqref{def}, gives weight $r(x,y)=p(y-x)$
to the edge $(x,y)$. Let us suppose that $p$ is a cyclic measure.
Consider $C_i$ a cycle representant of the equivalence class in
$\mathcal C^*$ corresponding to the cyclic measure $q_i$ in
\eqref{finfin1}. On $\mathbb Z^d$ there is defined a shift operator
$\tau_x$ that acts naturally on cycles by
$$
\tau_x(x^0,x^1,\dots,x^{n-1},x^0):=(x+x^0,x+x^1,\dots,x+x^{n-1},x+x^0)\,.
$$
Define the family of cycles $\left\{\tau_xC_i\right\}_{x\in \mathbb
Z^d}^{i\in \mathbb N}$ in $\mathbb Z^d$. Let $\rho$ be the positive
measure on $\mathcal C$ that gives weight $m_i$ to  $[\tau_xC_i]$
for any $x$. The results in subsection \ref{ciclzd} imply the
validity of the decomposition \eqref{decgr} with this specific
measure $\rho$. In this case \eqref{decgr} becomes
$$
r(x,y)=p(y-x)=\sum_{i\in \mathbb N}\sum_{z\in \mathbb
Z^d}m_ir^{[\tau_zC_i]}(x,y)\,, \ \ \ \ \forall\  (x,y)\,.
$$

\section{Cyclic random walks on a finite graph with topology}
\label{crwfgt} We discuss here the general case of a $d\geq 2$
dimensional torus. More detailed results for the specific cases
$d=1,2$ will be discussed separately in some subsections.

We start observing that formula \eqref{nasce} can be written as
\begin{equation}
r=r^{\phi^r}+\sum_{(x,y)\in E_N'} s(x,y)r^{[C_{\{x,y\}}]}\,.
\nonumber
\end{equation}

A subset $A\subseteq W$ is monotone non decreasing if $r\in A$ and
$r\preceq r'$ implies $r'\in A$. The subsets $R^e$ and $R^*$ are in
general not monotone subsets of $W$. Nevertheless $R^e\cap R(\phi)$
and $R^*\cap R(\phi)$ are non decreasing for any fixed $\phi$. This
is the content of the next lemma.
\begin{Le}
Consider $r$, and $r'$ belonging to $W$ and such that $r\preceq r'$
and $\phi^r=\phi^{r'}$. We have that if $r\in R^*$ then also $r'\in
R^*$. The same happens for $R^e$. \label{simpleimportant}
\end{Le}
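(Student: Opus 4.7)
The plan is to reduce the statement to a simple observation about symmetric weights.

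First I would note that the hypotheses $r \preceq r'$ and $\phi^r = \phi^{r'}$ force $r' - r$ to be a nonnegative symmetric weight. Indeed, $\phi^r(x,y) = \phi^{r'}(x,y)$ rewrites as $r'(x,y) - r(x,y) = r'(y,x) - r(y,x)$, so $(r' - r)(x,y) = (r' - r)(y,x)$, and these common values are $\geq 0$ by $r \preceq r'$.

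Next I would decompose the symmetric nonnegative weight $r' - r$ using the elementary cycles associated with one-dimensional cells. For $(x,y) \in E'_N$, the cycle $C_{\{x,y\}} = (x,y,x)$ has displacement vectors summing to zero, hence is homotopically trivial, and $[C_{\{x,y\}}] \in \mathcal{C}^e \subseteq \mathcal{C}^*$. By \eqref{pesociclo}, $r^{[C_{\{x,y\}}]}$ assigns $1$ to both $(x,y)$ and $(y,x)$, and $0$ elsewhere. Therefore
\begin{equation}
r' - r \;=\; \sum_{(x,y) \in E'_N} (r'-r)(x,y)\, r^{[C_{\{x,y\}}]},\nonumber
\end{equation}
which is a nonnegative combination of purely cyclic weights indexed by elements of $\mathcal{C}^e$ (and in particular of $\mathcal{C}^*$).

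Finally, assume $r \in R^*$, so $r = \sum_{[C] \in \mathcal{C}^*} \rho([C])\, r^{[C]}$ for some positive measure $\rho$ on $\mathcal{C}^*$. Adding the two decompositions gives
\begin{equation}
r' \;=\; \sum_{[C] \in \mathcal{C}^*} \rho([C])\, r^{[C]} \;+\; \sum_{(x,y) \in E'_N} (r'-r)(x,y)\, r^{[C_{\{x,y\}}]},\nonumber
\end{equation}
which is a decomposition of $r'$ of the form \eqref{dec*}, so $r' \in R^*$. Since the added cycles belong to $\mathcal{C}^e$, the identical argument works verbatim for $R^e$ starting from a decomposition of the form \eqref{decel}. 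There is no real obstacle here; the content is entirely the observation that enlarging an allowed $r$ along a symmetric nonnegative increment only adds elementary two-cycle contributions, which are free for both classes.
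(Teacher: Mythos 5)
Your proposal is correct and follows essentially the same route as the paper: the paper likewise writes $r'=\sum_{[C]\in\mathcal C^*}\rho([C])r^{[C]}+(r'-r)$, observes that $h:=r'-r$ is a nonnegative symmetric weight (i.e. $\phi^h=0$), and expands $h=\sum_{(x,y)\in E'_N}h(x,y)r^{[C_{\{x,y\}}]}$ over the elementary two-cycles, exactly as you do for both $R^*$ and $R^e$.
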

\begin{proof}
We prove the statement for $R^*$. The proof for $R^e$ is the same.
By assumption there exists a decomposition like \eqref{dec*} for
$r$. We then have
\begin{equation}
r'=\sum_{[C]\in \mathcal C^*}\rho([C])r^{[C]}+ (r'-r)\,.
\label{bloes}
\end{equation}
By the hypotheses of the lemma $h:=r'-r$ belongs to $W$ and moreover
$\phi^h=0$. Clearly $R(0)\subseteq R^*$ and moreover for any $h\in
R(0)$ we have
\begin{equation}
h=\sum_{(x,y)\in E'_N}h(x,y)r^{[C_{\left\{x,y\right\}}]}\,.
\label{salamella}
\end{equation}
Putting together \eqref{bloes} and \eqref{salamella} we get $r'\in
R^*$.
\end{proof}

\noindent {\it Proof of Lemma \ref{neclass}} Since we discussed the
discrete Hodge decomposition in the two dimensional case we will prove
this Lemma also in the two dimensional case. The proof in the general
case is analogous.  Consider
$C=(z^0,z^1,\dots ,z^n)$ with $z^n=z^0$ a cycle such that $[C]\in
\mathcal C^*$. Since $C$ is homotopically trivial it holds \eqref{rivestimento}.
Componentwise \eqref{rivestimento} is
written as
$$
\left\{
\begin{array}{l}
|\{(x,x+e^{(1)}/N)\in C\}|-|\{(x+e^{(1)}/N,x)\in C\}|=0\,,\\
|\{(x,x+e^{(2)}/N)\in C\}|-|\{(x+e^{(2)}/N,x)\in C\}|=0\,,
\end{array}
\right.
$$
that implies
\begin{equation}
\left\{
\begin{array}{l}
\langle\phi^{r^{[C]}},\phi_1\rangle_{1}=0\,,\\
\langle\phi^{r^{[C]}},\phi_2\rangle_{1}=0\,,
\end{array}
\right.\nonumber
\end{equation}
where the vector fields $\phi_i$ are defined in \eqref{B1}. This
means that $\phi^{r^{[C]}}\in \left(\Lambda^1_H\right)^\bot$.
Moreover we know that $r^{[C]}$ satisfies the balancing condition
\ref{bal222}. This implies that $\phi^{r^{[C]}}$ is a divergence
free discrete vector field i.e. it is a circulation. This is
equivalent to say that it belongs to $(\delta \Lambda^0)^\bot$. By
Theorem \ref{DHD} we obtain $\phi^{r^{[C]}}\in d\Lambda^2$ for any
$[C]\in \mathcal C^*$. Given $r\in R^*$ having a decomposition like
\eqref{dec*}, by linearity of the projection of weights onto vector
fields we have
\begin{equation}
\phi^{r}=\sum_{[C]\in \mathcal C^*}\rho([C])\phi^{r^{[C]}}\,.
\label{arg}
\end{equation}
Since $d\Lambda^2$ is a vector subspace, it is closed under linear
combinations and consequently the right hand side of \eqref{arg}
belongs to $d\Lambda^2$. Since $\mathcal C^e\subseteq \mathcal C^*$
the condition $\phi^{r}\in d\Lambda^2$ is also necessary for the
validity of \eqref{decel}. \qed

\smallskip

\noindent {\it Proof of Lemma \ref{closed}} We prove the statement
for $R^*$. The proof for $R^e$ is the same. We need to prove that
for any sequence $r_n\in R^*$ converging to some element $r\in W$ we
have necessarily $r\in R^*$. Observe that for any $(x,y)\in E_N$ we
have $r_n(x,y)\geq 0$ and consequently $\lim_{n\to
+\infty}r_n(x,y)=r(x,y)\geq 0$. Since for every $n$ it holds $r_n\in
R^*$, we can write
$$
r_n=\sum_{[C]\in \mathcal C^*}\rho_n([C])r^{[C]}\,,
$$
for some positive measures $\rho_n$. Let $M:=\max_{(x,y)\in
E_N}r(x,y)$. Consider any $[C]\in \mathcal C^*$ and take $(x,y)\in
C$. For $n$ large enough we deduce
$$0\leq \rho_n([C])\leq\sum_{[C]\in \mathcal C^*}\rho_n([C])r^{[C]}(x,y)=r_n(x,y)\leq 2M\,.$$
The last inequality follows by the fact that $r_n$ converges to $r$
and $r(x,y)\leq M$. Since $\rho_n([C])$ takes values on a compact
set we can then extract a converging subsequence $\rho_{n_j}([C])$.
By a finite Cantor diagonalizing argument, there exists a
subsequence (that we still call $n_j$) such that $\rho_{n_j}([C])$
is converging for any $[C]\in \mathcal C^*$. Let us call
$\rho([C])\geq 0$ the corresponding limits. Since we have a finite
sum we get
$$
r=\lim_{j\to +\infty}r_{n_j}=\lim_{j\to +\infty} \sum_{[C]\in
\mathcal C^*}\rho_{n_j}([C])r^{[C]}= \sum_{[C]\in \mathcal
C^*}\rho([C])r^{[C]}\
$$
This is equivalent to say $r\in R^*$. \qed

\smallskip

The bi-dimensional example illustrated in figure \ref{fig2} shows
that the condition $\phi^{r}\in d\Lambda^2$ is not sufficient
neither for \eqref{dec*} nor for \eqref{decel}. Indeed in this case
$\phi^{r}\in d\Lambda^2$ but the oriented weighted graph $(V_N, r)$
contains no homotopically trivial cycles. Consequently
decompositions of the type \eqref{dec*} and \eqref{decel} are not
possible.

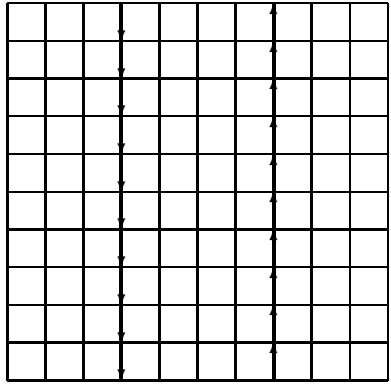
\begin{figure}
\setlength{\unitlength}{5cm}
\begin{picture}(1,1)
\multiput(0,0)(0.1,0){10}%
{\line(0,1){0.1}}
\multiput(0,0.1)(0.1,0){10}%
{\line(0,1){0.1}}
\multiput(0,0.2)(0.1,0){10}%
{\line(0,1){0.1}}
\multiput(0,0.3)(0.1,0){10}%
{\line(0,1){0.1}}
\multiput(0,0.4)(0.1,0){10}%
{\line(0,1){0.1}}
\multiput(0,0.5)(0.1,0){10}%
{\line(0,1){0.1}}
\multiput(0,0.6)(0.1,0){10}%
{\line(0,1){0.1}}
\multiput(0,0.7)(0.1,0){10}%
{\line(0,1){0.1}}
\multiput(0,0.8)(0.1,0){10}%
{\line(0,1){0.1}}
\multiput(0,0.9)(0.1,0){10}%
{\line(0,1){0.1}}

\multiput(0,0)(0.1,0){10}%
{\line(1,0){0.1}}
\multiput(0,0.1)(0.1,0){10}%
{\line(1,0){0.1}}
\multiput(0,0.2)(0.1,0){10}%
{\line(1,0){0.1}}
\multiput(0,0.3)(0.1,0){10}%
{\line(1,0){0.1}}
\multiput(0,0.4)(0.1,0){10}%
{\line(1,0){0.1}}
\multiput(0,0.5)(0.1,0){10}%
{\line(1,0){0.1}}
\multiput(0,0.6)(0.1,0){10}%
{\line(1,0){0.1}}
\multiput(0,0.7)(0.1,0){10}%
{\line(1,0){0.1}}
\multiput(0,0.8)(0.1,0){10}%
{\line(1,0){0.1}}
\multiput(0,0.9)(0.1,0){10}%
{\line(1,0){0.1}}
\multiput(0,1)(0.1,0){10}%
{\line(1,0){0.1}}
\multiput(1,0)(0,0.1){10}%
{\line(0,1){0.1}}

\linethickness{0.35mm}
\multiput(0.7,0)(0,0.1){10}%
{\vector(0,1){0.1}}
\multiput(0.3,1)(0,-0.1){10}%
{\vector(0,-1){0.1}}

\end{picture}
\caption{An $r\in W$ for the two dimensional torus such that
$\phi^r\in d\Lambda^2$ nevertheless $r\not \in R^*$. Positive
unitary weights are represented by boldfaced arrows. Opposite sides of the square
are identified.} \label{fig2}
\end{figure}

Recalling Lemma \ref{simpleimportant} and the fact that $r^\phi$ is
the minimal element of $R(\phi)$, it is natural to study the
following problem. Given $\phi\in d\Lambda^2$ a discrete vector
field, under which conditions $r^\phi\in R^*$, or $r^\phi\in R^e$?
When $r^\phi\in R^*$ then by Lemma \ref{simpleimportant} we deduce
that $R(\phi)\subseteq R^*$. When $r^\phi\in R^e$ then by Lemma
\ref{simpleimportant} we deduce that $R(\phi)\subseteq R^e\subseteq
R^*$. On the other side if $r^\phi\not\in R^*$ then by Lemma
\ref{closed} all the elements of $W$ in a neighborhood of $r^\phi$
will also not belong to $ R^*$. Note that inside such a neighborhood
there will be also rates such that $r(x,y)>0$ for any $(x,y)\in
E_N$. The same happens for $R^e$.

\medskip

\noindent {\it Proof of Corollary \ref{corclosed}} The proof follows
directly from the above argument and the fact that there exists a
$\phi\in d\Lambda^2$ such that $r^\phi\not\in R^*$. In dimension two
this is exactly the example in Figure \ref{fig2}. Similar examples
can be clearly constructed in any dimension $d\geq 2$.\qed

\smallskip

We will give a complete characterization of vector fields such that
$r^\phi\in R^e$ in dimension one and two. A characterization of
vector fields such that $r^\phi\in R^*$ seems to be an interesting
combinatorial problem.

\subsection{One dimensional torus}

The balancing condition \eqref{balancing} at site $x\in V_N$ can be
written as
\begin{equation}
r(x,x+1/N)-r(x+1/N,x)=r(x-1/N,x)- r(x,x-1/N)\,,
\nonumber
\end{equation}
and corresponds to require that the discrete vector field $\phi^r$
has divergence zero. Note that in one dimension a discrete vector
field is divergence free if and only if it is constant.

\smallskip

\noindent {\it Proof of Theorem \ref{teodoro}} We want to
characterize the Markov model having rates $r$ admitting a
decomposition of the type
\begin{equation}
r=\sum_{[C]\in \mathcal C^*}\rho([C])r^{[C]}=\sum_{x\in
V_N}\rho([C_{\left\{x,x+1/N\right\}}])
r^{[C_{\left\{x,x+1/N\right\}}]}\,. \label{decsh}
\end{equation}

If \eqref{decsh} holds then clearly we have that the associated
discrete vector field satisfies
$$
\phi^r(x,x+1/N)=\rho([C_{\left\{x,x+1/N\right\}}])
-\rho([C_{\left\{x,x+1/N\right\}}])=0\,.
$$
Conversely given a weight $r$ such that $\phi^r=0$ then we can set
in \eqref{decsh}
$\rho([C_{\left\{x,x+1/N\right\}}])=r(x,x+1/N)=r(x+1/N,x)$. Finally the
reversibility follows by the detailed balance condition
$$
\frac{1}{N}r(x,x+1/N)=\frac{1}{N}r(x+1/N,x)\,, \ \ \ \ \ \forall x\in
V_N\,,
$$
that is equivalent to $\phi^r=0$.\qed

\smallskip

\noindent {\it Proof of Theorem \ref{teodoro2}}
We want to characterize now the Markov models having a decomposition
of the type
\begin{equation}
r=\sum_{[C]\in \mathcal C}\rho([C])r^{[C]}=\sum_{x\in
V_N}\rho([C_{\left\{x,x+1/N\right\}}])
r^{[C_{\left\{x,x+1/N\right\}}]}+\rho([C_+])r^{[C_+]}+\rho([C_-])r^{[C_-]}\,.
\label{decsh2}
\end{equation}
The discrete one dimensional torus is a finite graph and
consequently this problem has been solved in Theorem \ref{teofin}.
We discuss it again in terms of the discrete vector field $\phi^r$.
The fact that in one dimension a discrete vector field has zero
divergence if and only if it is constant has already been stressed.
If \eqref{decsh2} holds then clearly we have
$$
\phi^r(x,x+1/N)=\rho([C_{\left\{x,x+1/N\right\}}])
-\rho([C_{\left\{x,x+1/N\right\}}])+
\rho([C_+])-\rho([C_-])=\rho([C_+])-\rho([C_-])\,,
$$
that is a constant discrete vector field. Conversely consider some
rates such that for any $x\in V_N$ it holds $\phi^r(x,x+1/N)=c$ with
$c$ a constant real number. Then from \eqref{decsh2} recalling the
decomposition \eqref{nasce} we obtain
\begin{equation}
\left\{
\begin{array}{l}
r(x,x+1/N)=\rho([C_{\left\{x,x+1/N\right\}}])+\rho([C_+])=[c]_++s(x,x+1/N)\,,\\
r(x+1/N,x)=\rho([C_{\left\{x,x+1/N\right\}}])+\rho([C_-])=[-c]_++s(x+1/N,x)\,,
\end{array}
\right. \label{trio}
\end{equation}
where we recall that $s$ is the symmetric part of $r$. Clearly
\eqref{trio} implies
\begin{equation}
\rho([C_{\left\{x,x+1/N\right\}}])=s(x,x+1/N)-a\,, \label{bio}
\end{equation}
where $a$ is a real parameter. Putting \eqref{bio} in \eqref{trio}
we obtain also
\begin{equation}
\left\{
\begin{array}{l}
\rho([C_+])=[c]_++a\,,\\
\rho([C_-])=[-c]_++a\,.
\end{array}
\right. \label{canz}
\end{equation}
Since the left hand sides of \eqref{bio} and \eqref{canz} are
non-negative we obtain the constraint $a \in [0,m]$. Recalling the
definition of the symmetric part $s$ we obtain \eqref{teonuovo}.
\qed

Note that the decomposition \eqref{decsh2} is unique only when
$m=0$.

\subsection{Two dimensional torus}

Let $\rho$ a positive measure on $\mathcal C^e$. The general
decomposition \eqref{decel} reads
\begin{equation}
r=\sum_{(x,y)\in
E_N'}\rho([C_{\left\{x,y\right\}}])r^{[C_{\left\{x,y\right\}}]}+\sum_{f\in
F_N}\rho([C_f])r^{[C_f]}\,. \label{lofa}
\end{equation}
To the measure $\rho$ in \eqref{lofa} we can also associate an
element $\psi^\rho\in\Lambda^2$ defined as
\begin{equation}
\psi^\rho(f):=\rho([C_f])-\rho([C_{f^c}])\, \qquad f\in F_N\,.
\label{lofa2}
\end{equation}

\begin{Le}
\label{questo}
If $r$ is like in \eqref{lofa} then we have $\phi^r=d\psi^\rho$.
\end{Le}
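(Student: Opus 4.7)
The plan is to compute $\phi^r(x,y) = r(x,y) - r(y,x)$ directly from the decomposition \eqref{lofa}, and to check that the result coincides with $d\psi^\rho(x,y)$ as defined in \eqref{defdd}.

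First I would observe that the edge-cycle contributions $\rho([C_{\{x,y\}}]) r^{[C_{\{x,y\}}]}$ are symmetric in the pair $(x,y)$, since $r^{[C_{\{x,y\}}]}(x,y) = r^{[C_{\{x,y\}}]}(y,x) = 1$. Hence these terms cancel in the difference $r(x,y) - r(y,x)$, and only the face-cycle contributions remain:
\begin{equation}
\phi^r(x,y) \;=\; \sum_{\{f \in F_N\,:\,(x,y)\in f\}} \rho([C_f]) \;-\; \sum_{\{f \in F_N\,:\,(y,x)\in f\}} \rho([C_f])\,. \nonumber
\end{equation}

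Next I would use the key topological observation that $(y,x) \in f$ if and only if $(x,y) \in f^c$, since reversing the orientation of the two-cell reverses every edge of its boundary cycle. Re-indexing the second sum by $g := f^c$ (noting that the map $f\mapsto f^c$ is an involution on $F_N$), one obtains
\begin{equation}
\sum_{\{f\,:\,(y,x)\in f\}} \rho([C_f]) \;=\; \sum_{\{g\,:\,(x,y)\in g\}} \rho([C_{g^c}])\,. \nonumber
\end{equation}
Substituting back and using the definition \eqref{lofa2} of $\psi^\rho$ yields
\begin{equation}
\phi^r(x,y) \;=\; \sum_{\{f\,:\,(x,y)\in f\}} \bigl(\rho([C_f]) - \rho([C_{f^c}])\bigr) \;=\; \sum_{\{f\,:\,(x,y)\in f\}} \psi^\rho(f)\,, \nonumber
\end{equation}
which is precisely $d\psi^\rho(x,y)$ by \eqref{defdd}.

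No step here is a real obstacle; the only point requiring a moment of care is the bookkeeping of orientations, namely the identity $(y,x)\in f \iff (x,y)\in f^c$, together with checking that $\psi^\rho$ is a well-defined two-chain, i.e.\ that $\psi^\rho(f^c) = -\psi^\rho(f)$, which is immediate from \eqref{lofa2}.
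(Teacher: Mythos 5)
Your proposal is correct and follows essentially the same route as the paper: cancel the symmetric edge-cycle terms in $r(x,y)-r(y,x)$ and pair the face-cycle terms via the orientation-reversal relation $(y,x)\in f \iff (x,y)\in f^c$, recovering $\psi^\rho$ and hence $d\psi^\rho$ from \eqref{lofa2} and \eqref{defdd}. The only cosmetic difference is that the paper names the two faces $f_+$ and $f_-$ containing the edge and writes $\phi^r(x,y)=\psi^\rho(f_+)-\psi^\rho(f_-)$ explicitly, whereas you keep the sums general and re-index by the involution $f\mapsto f^c$; the content is identical.
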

\begin{proof}
Given $(x,y)\in E_N$ it belongs to only two elements of $F_N$.
Moreover one is clockwise oriented and the other one is
anticlockwise oriented. Let us call $f_+$ the element of $F_N'$ such
that $(x,y)\in f_+$ and $f_-$ the element of $F_N'$ such that
$(x,y)\in f_-^c$. Then we have
$$
\phi^r(x,y)=\left(\rho([C_{f_+}])-\rho([C_{f_+^c}])\right)-
\left(\rho([C_{f_-}])-\rho([C_{f_-^c}])\right)=\psi^\rho(f_+)-\psi^\rho(f_-)\,.
$$
Recalling \eqref{defdd} and \eqref{lofa2} we obtain the statement of
the lemma.
\end{proof}

The following Remarks will not be used during our proofs but
will be useful in the following.
\begin{Rem}\label{duality}
({\it Duality}) We recall a well known duality relationship. To the
2 dimensional discrete torus $(V_N,E_N)$ we associate a dual
discrete torus $(\tilde V_N,\tilde E_N)$ defined as follows. The
vertices of the dual graph are the elements of $\mathbb T^2$ having
coordinates $x+\frac {1}{2N} (e^{(1)}+e^{(2)})$ with $x\in V_N$.
Note that every element of $\tilde V_N$ is the center of a cell of
the original cellular decomposition of $\mathbb T^2$. The set of
oriented edges $\tilde E_N$ is constituted by the pairs $(v,w)$ such
that $v,w\in \tilde V_N$ and moreover $d(v,w)=1/N$. It is possible
to define a duality map $D$. This map is defined both on $E_N$ with
image on $\tilde E_N$ and on $\tilde E_N$ with image in $E_N$. This
map is injective and satisfies the involution property $D^2=-\mathbb
I$. We use the same symbol $D$, both when it acts on $E_N$ or in
$\tilde E_N$, since it can be defined easily in the same way. Any
element $(x,y)$, both of $E_N$ and of $\tilde E_N$ can be naturally
represented by an arrow exiting from $x$ and entering in $y$. The
element $D(x,y)$ is defined as the unique element of, either $\tilde
E_N$ or $E_N$, whose representing arrow is obtained rotating
counterclockwise of $\frac{\pi}{2}$ the arrow representing $(x,y)$
around its middle point. The dual map can be naturally extended to
act on discrete vector fields. More precisely given for example
$\phi \in \Lambda^1$ we define $D\phi \in \tilde \Lambda^1$ as
$$
\left(D\phi\right)(w,z):=\phi(x,y)\,,
$$
where $(x,y)$ is the unique element of $E_N$ such that
$D(x,y)=(w,z)$. Clearly we called $\tilde \Lambda^1$ the set of
discrete vector fields on the dual torus. According to these
definition the notion of circulation and rotation free are dual to
each other. More precisely given a discrete vector field $\phi$
whose discrete divergence is zero in a vertex, then $D\phi$ will
have zero circulation around the corresponding dual face. Conversely
given a discrete vector field $\phi$ that satisfies the condition of
zero rotation around a face then $D\phi$ will satisfy the condition
of zero discrete divergence in the corresponding dual vertex.
\end{Rem}

\begin{Rem}
Given $\psi \in \Lambda^2$ it is simple to compute $\phi=d\psi$ by
\begin{equation}
\phi(x,y)=\psi(f_+)-\psi(f_-)\,.\label{inps}
\end{equation}
In terms of the dual graph it can be interpreted in the following
way. Consider $\tilde \psi$ the element of $\tilde{\Lambda}_0$ that
associates to any vertex of $\tilde V_N$ the value $\psi(f)$ where
$f\in F'_N$ is the oriented dual face of the fixed vertex. Then
\eqref{inps} can be written as
\begin{equation}
D\phi=\delta \tilde \psi\,.\label{gigia}
\end{equation}
In extended form \eqref{gigia} is
\begin{equation}
\left\{
\begin{array}{l}
\phi(x,x+e^{(1)}/N)=\delta \tilde\psi
(x+e^{(1)}/(2N)-e^{(2)}/(2N),x+e^{(1)}/(2N)+e^{(2)}/(2N))\,,\\
\phi(x,x+e^{(2)}/N)=-\delta \tilde\psi (x-e^{(1)}/(2N)+e^{(2)}/(2N),
x+e^{(1)}/(2N)+e^{(2)}/(2N))\,.
\end{array}
\right.\label{inpsd}
\end{equation}
Conversely given $\phi\in d\Lambda^2$, the determination of a $\psi$
such that $\phi=d\psi$ requires a non local computation. More
precisely using \eqref{gigia} we determine $\delta \tilde\psi \in
\tilde{\Lambda}_1$ and then we can compute
\begin{equation}
\psi(f)-\psi(f')=\sum_{i=1}^n\delta \tilde\psi(x^{i},x^{i+1})\,,
\label{iosoloio}
\end{equation}
where $(x^{i},x^{i+1})\in \tilde{E}_N$ and $x^{0}$ is dual to
$f'$ and $x^{n}$ is dual to $f$. \label{tusolotu}
\end{Rem}

\noindent {\it Proof of Theorem \ref{ugu}} Given $\phi\in
d\Lambda^2$ there exists a $\psi\in \Lambda^2$ such that $\phi=d
\psi$. Since the kernel of the boundary operator $d$ coincides with
the constant 2 forms, the elements $\psi'\in\Lambda^2$ such that
$d\psi'=\phi$ are exactly of the type $\psi+c$ where $c$ is an
arbitrary constant.

Let $r\in W$ having a decomposition like \eqref{lofa} and such that
$\phi^r=\phi$. By Lemma \eqref{questo} we have $\psi^\rho=\psi+c$
for some constant $c$. This means
\begin{equation}
\rho([C_f])-\rho([C_{f^c}])=\psi(f)+c\,, \qquad f\in
F'_N\,.\label{finiti}
\end{equation}
Recalling \eqref{nasce} we have that
\begin{equation}
s:=r-r^\phi\label{urp}
\end{equation}
is the symmetric part of $r$ and belongs to $R(0)$. We want to
characterize which are the elements $s\in R(0)$ that can be obtained
in \eqref{urp} when $r\in R^e\cap R(\phi)$.

The more general positive solution to \eqref{finiti} is
\begin{equation}
\rho([C_f])=[(\psi+c)(f)]_++m(f)\,,\qquad f\in F_N\,,
\label{ilsolito}
\end{equation}
where $m(f)$ are arbitrary non-negative numbers such that $m(f)=m(f^c)$.
In \eqref{ilsolito} $\psi+c$ is the element of $\Lambda^2$ obtained by the sum of
$\psi$ and the constant 2-chain $c\in \Lambda^2$ defined by $c(f)=c$ for any $f\in F_N'$. In particular
\eqref{ilsolito} means
$$
\rho([C_f])=[\psi(f)+c]_++m(f)\,,\qquad \rho([C_{f^c}])=[-\psi(f)-c]_++m(f)\,,\qquad f\in F_N'\,.
$$
Note that we can write
\begin{equation}
\sum_{f\in F_N}m(f)r^{[C_{f}]}=\sum_{(x,y)\in E'_N}
\left(\sum_{\left\{f\in F_N\,:\,(x,y)\in
f\right\}}m(f)\right)r^{[C_{\left\{x,y\right\}}]}\,. \label{hotel}
\end{equation}
Putting \eqref{ilsolito} in \eqref{lofa} using \eqref{hotel} we get
\begin{equation}
s=\sum_{(x,y)\in
E'_N}\rho'([C_{\left\{x,y\right\}}])r^{[C_{\left\{x,y\right\}}]}
+\sum_{f\in F_N}[(\psi+c)(f)]_+r^{[C_{f}]}-r^\phi\,,
\label{ariacond2}
\end{equation}
where
$$
\rho'([C_{\left\{x,y\right\}}]:=\rho([C_{\left\{x,y\right\}}])+\sum_{\left\{f\in
F_N\,:\,(x,y)\in f\right\}}m(f)\,.
$$
Since the positive numbers $m(f)$ and $\rho([C_{x,y}])$ are
arbitrary, for any fixed $c\in \mathbb R$ \eqref{ariacond2} says
that $s\geq s(c)$ where
$$
s(c):= \sum_{f\in F_N}[(\psi+c)(f)]_+r^{[C_{f}]}-r^\phi\,.
$$

Recall that given $(x,y)\in E'_N$ there exist only two elements of
$F_N$ that contain $(x,y)$. One is clockwise oriented and the other
one is anticlockwise oriented. As before we call $f_+$ the element
of $F'_N$ such that $(x,y)\in f_+$ and $f_-$ the element of $F'_N$
such that $(x,y)\in f_-^c$.  If $c\in \mathbb R$ is fixed, the
condition $s\geq s(c)$ is equivalent to require for any $(x,y)\in
E'_N$ (the condition for $(y,x)$ will automatically be satisfied)
\begin{equation}
s(x,y)\geq [\psi(f_+)+c]_++[-\psi(f_-)-c]_+-[\phi(x,y)]_+\,.
\label{spagna}
\end{equation}
Since $\phi=d\psi $ we have also
\begin{equation}
\phi(x,y)=\psi(f_+)-\psi(f_-)\,. \label{equesta}
\end{equation}
It holds the following identity
\begin{equation}
Z(a,b):=[a]_++[-b]_+-[a-b]_+=\left\{
\begin{array}{ll}
\min\left\{|a|,|b|\right\} & \hbox{if} \ \ \hbox{sg}(a)=\hbox{sg}(b)\,,\\
0 & \hbox{if} \ \ \hbox{sg}(a)\neq \hbox{sg}(b)\,,
\end{array}
\right.
\nonumber
\end{equation}
where $a$ and $b$ are real numbers, $\hbox{sg}(\cdot)$ is the sign
function that associate to any real number its sign and the first
identity is the definition of the function $Z$. Putting
\eqref{equesta} in \eqref{spagna} we get for any $(x,y)\in E'_N$
\begin{equation}
s(x,y)\geq Z(\psi(f_+)+c,\psi(f_-)+c)\,.
\nonumber
\end{equation}
It is simple to check that
\begin{equation}
 Z(\psi(f_+)+c,\psi(f_-)+c)=d(0,c+I(x,y))\,,
\nonumber
\end{equation}
where $d$ is the Euclidean distance on the real line and $I(x,y)$ is
the closed interval of the real line
$[\min\left\{\psi(f_+),\psi(f_-)\right\},
\max\left\{\psi(f_+),\psi(f_-)\right\}]:=[i_1(x,y),i_2(x,y)]$.

Let us call
$$
S(c):=\left\{s\in R(0)\,:\, s\geq s(c)\right\}\,.
$$
We showed that
\begin{equation}
S(c)=\left\{s\in R(0)\,:\, s(x,y)\geq d(0,c+I(x,y))\,, (x,y)\in
E'_N\right\}\,. \nonumber
\end{equation}
Since $c$ is arbitrary we get that when $r\in \mathbb R^e\cap
R(\phi)$ the corresponding symmetric part $s$ in \eqref{urp}
satisfies
\begin{equation}
s\in \cup_{c\in \mathbb R}S(c). \label{fantacond}
\end{equation}
We now write this condition in a simpler form. Indeed we will now
show that $s\in R(0)$ belongs to $\cup_{c\in \mathbb R}S(c)$ if and
only if
\begin{equation}
\cap_{(x,y)\in E'_N}[-i_2(x,y)-s(x,y),-i_1(x,y)+s(x,y)]\neq
\emptyset\,.\label{finocamilla}
\end{equation}
Indeed it holds
\begin{equation}
s(x,y)\geq d(0,c+I(x,y)) \qquad \Longleftrightarrow \qquad c\in
\left[-i_2(x,y)-s(x,y),-i_1(x,y)+s(x,y)\right]\,. \label{equiequi}
\end{equation}
If $s\in \cup_cS(c)$ then, using \eqref{equiequi}, there exists a
$c^*\in \mathbb R$ such that
$$
c^*\in \left[-i_2(x,y)-s(x,y),-i_1(x,y)+s(x,y)\right]\,, \qquad
\forall (x,y)\in E'_N\,,
$$
and \eqref{finocamilla} holds. Conversely assume that
\eqref{finocamilla} holds and call $c^*$ an element of the non empty
intersection. Then still using \eqref{equiequi} we get $s\in S(c^*)$.

Since closed intervals are compact convex sets of $\mathbb R^1$, we
can write condition \eqref{finocamilla} in a simpler form using the
Helly's Theorem \ref{Helly}. We apply it in the special case of
dimension one. We obtain that condition \eqref{finocamilla} holds if
and only if for any pair $(x,y)$ and $(x',y')$ of elements of $E'_N$
it holds
\begin{equation}
[-i_2(x,y)-s(x,y),-i_1(x,y)+s(x,y)]\cap
[-i_2(x',y')-s(x',y'),-i_1(x',y')+s(x',y')]\neq \emptyset\,. \nonumber
\end{equation}
This is equivalent to require
\begin{equation}
d\left(I(x,y),I(x',y')\right)\leq s(x,y)+s(x',y')\,. \label{quasifine}
\end{equation}
Since \eqref{quasifine} has to be satisfied for any pair of elements
of $E'_N$ we deduce that condition \eqref{fantacond} is equivalent
to
\begin{equation}
s\in \mathcal P\left(\left\{I(x,y)\right\}_{(x,y)\in E'_N}\right)\,.
\nonumber
\end{equation}
\qed

\medskip

Let us briefly discuss how the above results can be used on a
problem concerning an infinite graph. Consider the infinite graph
having vertices $\mathbb Z^2$ and edges coinciding with the pairs of
vertices $(x,y)$ such that $d(x,y)=1$. Consider also a periodic
weight $r$, i.e. a weight such that there exist $N_1$ and $N_2$
integer numbers such that
$$
r(x,y)=r(x+(N_1,N_2),y+(N_1,N_2))\,, \qquad \forall (x,y)\,.
$$
Then the problem of finding a periodic cyclic decomposition of $r$,
i.e. a decomposition such that
$\rho([C])=\rho([\tau_{(N_1,N_2)}C])$, is strictly related to the
problem of finding a decomposition like \eqref{dec*} or
\eqref{decel} for a finite torus with $N_1\times N_2$ vertices.
Indeed in the case of elementary decompositions \eqref{decel} the
two problems are equivalent. Apart from an irrelevant dilation, an
elementary cycle on the discrete torus can be naturally interpreted
as an elementary cycle on the infinite graph. It is easy to see that
if \eqref{decel} holds on the discrete torus, then
\begin{equation}
\sum_{[C]\in \mathcal C^e}\sum_{x\in \mathbb
Z^d}\rho([C])r^{[\tau_xC]}\,, \label{devoandare}
\end{equation}
is a periodic cyclic decomposition on the infinite graph with
elementary cycles. In \eqref{devoandare} $\mathcal C^e$ are the
equivalence classes of elementary cycles on the discrete torus.
Conversely every elementary cycle on the infinite graph can be
projected onto an elementary cycle of the torus. By periodicity of
the weight $r$ you get a decomposition like \eqref{decel}. In the
case of decompositions \eqref{dec*} the relation among the two
problems is more subtle and we will not discuss here.

\subsection{Other topologies}
The proofs of Theorems \ref{bart} and \ref{lisa} are very similar to
the one of \ref{ugu}. We give only a sketch.

\medskip

\noindent {\it Proof of Theorem \ref{bart}} The proof of this
Theorem follows closely the lines of reasoning of the proof of
Theorem \ref{ugu}. In particular the two basic ingredients are the
following. The first one is that every edge $(x,y)\in E_N$ belongs
to only two elements of $F_N$. The second one is that we can choose
orientations in $F_N'$ in such a way that every pair of elements on
$F_N'$ is oriented in agreement. As a consequence given $\phi \in
d\Lambda^2$ the two chain $\psi$ such that $\phi=d\psi$ is defined
only up an additive constant. \qed

\medskip

\noindent {\it Proof of Theorem \ref{lisa}} The proof of this
Theorem uses the same type of arguments of Theorem \ref{ugu}. The
difference is that given $\phi\in d\Lambda^2$ then the two chain
$\psi$ such that $\phi=d\psi$ is uniquely determined. This is due to
the fact that the surface is not orientable. For any $(x,y)\in E_N'$
such that there exist $f_+,f_-\in F_N'$ such that $(x,y)\in f_+$ and
$(y,x)\in f_-$ the we get the constraint $s(x,y)\geq d(0,I(x,y))$.
For an $(x,y)\in E_N'$ for which for example there exist $f_1,f_2\in
F_N'$ such that $(x,y)\in f_i$, $i=1,2$, instead of \eqref{spagna}
and \eqref{equesta} we get
$$
s(x,y)\geq [\psi(f_1)]_++[\psi(f_2)]_+-[\psi(f_1)+\psi(f_2)]_+\,.
$$
Since for any pair of real numbers $a,b$ it holds
\begin{equation}
[a]_++[b]_+-[a+b]_+=\left\{
\begin{array}{ll}
\min\left\{|a|,|b|\right\} & \hbox{if} \ \ \hbox{sg}(a)\neq\hbox{sg}(b)\,,\\
0 & \hbox{if} \ \ \hbox{sg}(a)= \hbox{sg}(b)\,,
\end{array}
\right.
\nonumber
\end{equation}
we deduce in this case $s(x,y)\geq d(0,J(x,y))$. The Theorem
follows. \qed

\section{Applications}
\label{applications}

We start this section discussing a very general example. First we show how to
discretize in a natural way a smooth divergence free vector field by
a divergence free discrete vector field. Then we apply the results
of Theorem \ref{ugu}. In dimension $d=2$, the continuous version of
the Hodge decomposition is the following. Let $u\in \mathbb T^2$ and
$F=(F_1(u),F_2(u))$ be a smooth vector field on $\mathbb T^2$. The
Hodge decomposition says that there exist two smooth functions $f$
and $\psi$ such that
\begin{equation}
F=\nabla f+\nabla^{\bot}\psi +a_1(1,0)+a_2(0,1)\,.
\label{dechodgecont}
\end{equation}
In the above formula $\nabla$ is the gradient,
$\nabla^{\bot}\psi(u):=(\psi_{u_2}(u),-\psi_{u_1}(u))$ is the
orthogonal gradient and $a_i=\int_{\mathbb T^2}F_i(v)dv$. The
definition of the orthogonal gradient can be seen as a continuum
version of formula \eqref{inpsd} in Remark \ref{tusolotu}.
Decomposition \eqref{dechodgecont} says that the continuous version
of the discrete vector fields on $d\Lambda^2$ are the vector fields
on $\mathbb T^2$ of the form
\begin{equation}
F=(\psi_{u_2},-\psi_{u_1})\,, \label{dcont}
\end{equation}
for a suitable smooth function $\psi$. Clearly a vector field of the
type \eqref{dcont} has zero divergence since
\begin{equation}
\nabla\cdot \nabla^\bot\psi=\psi_{u_2 u_1}-\psi_{u_1 u_2}=0\,.
\label{esplosione}
\end{equation}
To any smooth vector field of the type \eqref{dcont} we can
associate a discrete vector field $\phi_N\in d\Lambda^2$ on the
discrete torus defined as
\begin{eqnarray}
& &
\phi_N(x,x+e^{(1)}/N):=\int_{x_2-\frac{1}{2N}}^{x_2+\frac{1}{2N}}\psi_{u_2}\left(x_1+\frac{1}{2N},y\right)dy
\nonumber\\
& &
=\psi\left(x_1+\frac{1}{2N},x_2+\frac{1}{2N}\right)-\psi\left(x_1+\frac{1}{2N},x_2-\frac{1}{2N}\right)\,,
\label{gu}
\end{eqnarray}
and
\begin{eqnarray}
& &
\phi_N(x,x+e^{(2)}/N):=-\int_{x_1-\frac{1}{2N}}^{x_1+\frac{1}{2N}}\psi_{u_1}\left(y,x_2+\frac{1}{2N}\right)dy
\nonumber\\
& &
=\psi\left(x_1-\frac{1}{2N},x_2+\frac{1}{2N}\right)-\psi\left(x_1+\frac{1}{2N},x_2+\frac{1}{2N}\right)\,.
\label{ga}
\end{eqnarray}
In \eqref{gu} and \eqref{ga} we fixed the values of the discrete
vector field coinciding with the fluxes of the continuous vector
field $\nabla^{\perp}\psi$ respectively across the segments
$$
c\left(x+e^{(1)}/(2N)-e^{(2)}/(2N)\right)+(1-c)\left(x+e^{(1)}/(2N)+e^{(2)}/(2N)\right)\,,
\qquad c\in [0,1]\,,
$$
and
$$
c\left(x-e^{(1)}/(2N)+e^{(2)}/(2N)\right)+(1-c)\left(x+e^{(1)}/(2N)+e^{(2)}/(2N)\right)\,,\qquad
c\in[0,1]\,,
$$
with associated normal vectors respectively $e^{(1)}$ and $e^{(2)}$.
According to these definitions it is easy to find a two chain
$\psi_N\in \Lambda^2$ such that $\phi_N=d\psi_N$. Given $f\in F'_N$
of the form $f=(x,x+e^{(1)}/N,x+e^{(1)}/N+e^{(2)}/N,x+e^{(2)}/N,x)$
we have
\begin{equation}
\psi_N(f):=\psi\left(x+e^{(1)}/(2N)+e^{(2)}/(2N)\right)\,. \nonumber
\end{equation}
The condition of zero divergence for $\phi_N$ can be also directly
derived by \eqref{esplosione}.

\smallskip

We us call
$$
M:=\max_{u,v\in \mathbb T^2}|\psi(u)-\psi(v)|\,.
$$
Fix a weight $r:=r^{\phi_N}+s\in R(\phi_N)$ with $s(x,y)\geq
\frac{M}{2}$ for any $(x,y)\in E_N$. Since $d(I(x,y),I(x',y'))\leq
M$ for any pair of elements of $E'_N$ then
$$
s(x,y)+s(x',y')\geq M\geq d(I(x,y),I(x',y'))\,,
$$
so that $r\in R^e$.

\smallskip

As an application of this example we discuss a direct consequence
for a random walk in a random environment. Consider the infinite
graph with vertices $\mathbb Z^2$ and oriented edges coinciding with
pairs of nearest neighbor vertices. Let $\psi$ be a smooth periodic
function on $\mathbb R^2$ with integer periods $N_1$ and $N_2$, i.e.
such that $\psi(u)=\psi(u+(N_1,N_2))$ for any $u\in \mathbb R^2$.
Let $U=(U_1,U_2)$ be a random variable uniform on
$[0,N_1]\times[0,N_2]\subseteq \mathbb R^2$. We construct the random
discrete vector field $\phi(\omega)$ defined by
\begin{align}
\phi(x,x+e^{(1)}; \omega):=
\tau_{U(\omega)}\left[\psi\left(x_1+\frac{1}{2},x_2+
\frac{1}{2}\right)-\psi\left(x_1+\frac{1}{2},x_2-\frac{1}{2}\right)\right]\,,\nonumber\\
\phi(x,x+e^{(2)};\omega):=\tau_{U(\omega)}\left[
\psi\left(x_1-\frac{1}{2},x_2+\frac{1}{2}\right)-\psi\left(x_1+\frac{1}{2},x_2+\frac{1}{2}\right)\right]\,.
\nonumber
\end{align}
Clearly $\phi(\omega)\in d\Lambda^2$ for any $\omega$ and moreover
the law of $\phi$ is invariant under translations. Let also
$\left\{U_{\left\{x,y\right\}}\right\}_{(x,y)\in E_N'}$ be a
collection of i.i.d. random variables taking values on an interval
$[a,b]$, $a,b> 0$ and independent from $U$. Here $E_N'$ are oriented edges
of a $N_1\times N_2$ discrete torus of mesh size $N=1$. We construct the random
weights
\begin{equation}
r(x,y;
\omega):=Z(x,\omega)^{-1}\left(r^{\phi(\omega)}(x,y)+U_{\left\{x,y\right\}}\right)\,,
\label{environment}
\end{equation}
where $Z(x,\omega)$ is the normalization constant that guarantees
the validity of \eqref{norm}. Note that
$U_{\left\{x,y\right\}}=U_{\left\{x+N_1,y+N_2\right\}}$ so that
$r(\omega)$ is a periodic enironment. Let
$$
M:=\max_{u,v\in [0,N_1]\times [0,N_2]}|\psi(u)-\psi(v)|\,.
$$
If $a\geq \frac M 2$ then the hypotheses in \cite{D} are satisfied
and the discrete time random walk in random environment having
transition probabilities determined by \eqref{environment} satisfies
a quenched Central Limit Theorem since it admits a periodic cyclic
decomposition into elementary cycles. Since the random environment
is periodic the result obtained is rather simple but the argument
can be generalized to the non periodic case \cite{DG}.

\smallskip
We end the paper with some comments. When $r\in W$ is given, to
determine if $r\in R^e$ we need to compute the associated discrete
vector field $\phi$ and then a two chain $\psi$ such that
$\phi=d\psi$. Once $\psi$ is computed we determine the corresponding
polyhedron $\mathcal P$ and finally we can check if the symmetric
part $s$ of $r$ belongs or not to $\mathcal P$. As discussed in
Remark \ref{tusolotu} the determination of $\psi$ requires a non
local computation. It is then useful to have some simple sufficient
conditions. For any pair $(x,y)$ and $(x',y')$ of elements of $E'_N$
we have
\begin{equation}
d(I(x,y),I(x',y'))\leq \max_{f,f'\in F'_N}|\psi(f)-\psi(f')|\,.
\label{sisveglia}
\end{equation}
If we are able to estimate from above the right hand side of
\eqref{sisveglia} by a constant $M$ and $s(x,y)\geq M/2$ for any
$(x,y)\in E_N'$ then $r\in R^e$. Consider $\tilde \psi$ the element
of $\tilde \Lambda_0$ associated to $\psi$ as in Remark
\ref{tusolotu} and use \eqref{iosoloio}. Consider an unoriented
graph having as vertices $\tilde V_N$ and as unoriented edges
$\left\{x,y\right\}$ with $(x,y)\in \tilde E_N'$. To every
unoriented edge we associate the weight $w(x,y):=|\phi^r(D(x,y))|$.
The weight of a path is the sum of the weights of its edges. These
weights introduce a metric structure on the graph. The distance
among two vertices is the minimal weight among all paths connecting
the two vertices. An upper bound to the right hand side of
\eqref{sisveglia} is then the diameter of this unoriented graph. It
can be estimated for example as the weight of any spanning tree.

\bigskip

\noindent {\bf  Acknowledgements}.  D.G. acknowledges the financial
support of PRIN 2009 protocollo n.2009TA2595.02  and thanks the
Department of Physics of the University ``La Sapienza" for the kind
hospitality. We thank the anonymous referee for a very
careful reading of the paper.

\vskip 0.5cm

\end{document}